\newcommand{\bbZ}{{\Bbb Z}}
\newcommand{\bbR}{{\Bbb R}}
\newcommand{\bbN}{{\Bbb N}}
\newcommand{\bbC}{{\Bbb C}}
\newcommand{\bbE}{{\Bbb E}}
\newcommand{\Cov}{\textnormal{Cov}}
\newcommand{\Var}{\textnormal{Var\hspace{.5mm}}}
\renewcommand{\P}{\mathbb{P}}
\renewcommand{\R}{\mathbb{R}}
\newcommand{\E}{\mathbb{E}}
\newcommand{\N}{\mathbb{N}}
\renewcommand{\Z}{\mathbb{Z}}
\newcommand{\pto}{\stackrel{P}\rightarrow}
\DeclareMathOperator*{\argmin}{arg\,min}
\newcommand{\bth}{\boldsymbol \theta}
\newcommand{\norm}[1]{\left\|#1\right\|}
\newcommand{\Edj}{\E_{\bth_0} d^2(2^j,0)} 
\newcommand{\Edjp}{\E_{\bth_0} d^2(2^{j'},0)}
\newcommand{\Edjf}{\E_{\bth_0}  d^2_{\textnormal{fBm}}(2^j,0)} 
\newcommand{\Edjpf}{\E_{\bth_0}  d^2_{\textnormal{fBm}}(2^{j'},0)}
\newcommand{\Edjfa}{\E_{\bth'}  d^2_{\textnormal{fBm}}(2^j,0)} 
\newcommand{\Edjpfa}{\E_{\bth'}  d^2_{\textnormal{fBm}}(2^{j'},0)}
\renewcommand{\cite}{\citeyear}
\begin{document}

\title{Tempered fractional Brownian motion: wavelet estimation, modeling and testing
\thanks{The second author was partially supported by the prime award no.\ W911NF-14-1-0475 from the Biomathematics subdivision of the Army Research Office, USA. The authors would like to thank Laurent Chevillard for his comments on this work.}
\thanks{{\em AMS Subject classification}. Primary: 62M10, 60G18, 42C40.}
\thanks{{\em Keywords and phrases}: fractional Brownian motion, semi-long range dependence, tempered fractional Brownian motion, turbulence, wavelets.}}

\author{B.\ Cooper Boniece, Gustavo Didier \\ Mathematics Department\\ Tulane University
\and Farzad Sabzikar\\ Department of Statistics\\ Iowa State University}


\bibliographystyle{agsm}

\maketitle

\begin{abstract}
The Davenport spectrum is a modification of the classical Kolmogorov spectrum for the inertial range of turbulence that accounts for non-scaling low frequency behavior. Like the classical fractional Brownian motion vis-\`a-vis the Kolmogorov spectrum, tempered fractional Brownian motion (tfBm) is a canonical model that displays the Davenport spectrum. The autocorrelation of the increments of tfBm displays semi-long range dependence (hyperbolic and quasi-exponential decays over moderate and large scales, respectively), a phenomenon that has been observed in wide a range of applications from wind speeds to geophysics to finance. In this paper, we use wavelets to construct the first estimation method for tfBm and a simple and computationally efficient test for fBm vs tfBm alternatives. The properties of the wavelet estimator and test are mathematically and computationally established. An application of the methodology to the analysis of geophysical flow data shows that tfBm provides a much closer fit than fBm.
\end{abstract}

\section{Introduction}

A \textit{tempered fractional Brownian motion} (tfBm) $B_{H,\lambda} = \{B_{H,\lambda}(t)\}_{t \in \bbR}$ with Hurst parameter $H > 0$ and tempering parameter $\lambda > 0$ is the stochastic process defined by the moving average representation
\begin{equation}\label{e:tfBm_MA}
B_{H,\lambda}(t) = \int_{\bbR} \{e^{- \lambda(t-u)_{+}}(t - u)^{H-1/2}_{+} - e^{- \lambda(-u)_{+}}(- u)^{H-1/2}_{+} \} B(du),
\end{equation}
where $u_{+} = u 1_{\{u > 0\}}$ and $B(du)$ is an independently scattered Gaussian random measure satisfying $\bbE B(du)^2 = \sigma^2du$. In the boundary case $\lambda = 0$ (and $0 < H < 1$), tfBm reduces to a fractional Brownian motion (fBm), namely, a Gaussian, stationary-increment, self-similar process (e.g., Embrechts and Maejima \cite{embrechts:maejima:2002}, Taqqu \cite{taqqu:2003}). TfBm is a new canonical model introduced in Meerschaert and Sabzikar \cite{meerschaert:sabzikar:2013,meerschaert:sabzikar:2014}, Sabzikar et al.\ \cite{sabzikar:meerschaert:chen:2015} that displays the so-named \textit{Davenport spectrum} (Davenport \cite{davenport:1961}). The latter is a modification of the classical Kolmogorov spectrum for the inertial range of turbulence that accounts for low frequency behavior and has been successfully applied in wind speed modeling (Norton and Wolff \cite{norton:wolff:1981}, Li and Kareem \cite{li:kareem:1990}, Beaupuits et al.\ \cite{beaupuits:etal:2004}). On the other hand, a \textit{wavelet} is a unit $L^2(\bbR)$-norm function that annihilates a certain number of polynomials (see \eqref{e:N_psi}). In this paper, we use wavelets to construct the first estimator for tfBm and a simple and computationally efficient test for fBm vs tfBm alternatives. The asymptotic properties of the wavelet estimator and test are mathematically established and their finite sample performance is computationally studied. An application of the methodology in geophysical flow data shows that tfBm provides a much closer fit than fBm. 

Classical models of turbulence describe how kinetic energy at the largest length scales is progressively transferred down to smaller scales. In the complete Kolmogorov spectral model for turbulence (Kolmogorov \cite{kolmogorov:1940,kolmogorov:1941}, Friedlander and Topper \cite{friedlander:topper:1961}, Shiryaev \cite{shiryaev:1999}), large eddies are produced in the low frequency range of scales, whereas in the inertial range (moderate frequencies), larger eddies are continuously broken down into smaller eddies, until they eventually dissipate (high frequencies). In the landmark paper Mandelbrot and Van Ness \cite{mandelbrot:vanness:1968}, Kolmogorov's model was revisited when fBm was proposed as a framework for the analysis of \textit{scale invariant}, or non-Markovian, phenomena (see Graves et al.\ \cite{graves:gramacy:watkins:franzke:2017}). A system is called scale invariant if its dynamics are driven by a continuum of time scales instead of a few characteristic scales. Within the fBm family, the instance $H = 1/3$ corresponds to the Kolmogorov spectrum for the inertial range.

Unlike fBm, tfBm is not self-similar. Instead, it satisfies the scaling property
$$
\{B_{H,\lambda}(ct)\}_{t \in \bbR} \stackrel{{\mathcal L}}= \{c^{H}B_{H,c\lambda}(t)\}_{t \in \bbR}, \quad c > 0,
$$
where $\stackrel{{\mathcal L}}=$ indicates equality of finite dimensional distributions. This is a consequence of the presence of the extra (tempering) parameter $\lambda > 0$, which controls the deviation from a fBm's power law spectrum at low frequencies. Notably, even though a stationary-increment process, tfBm converges almost surely to stationarity as $t\to\infty$. This property of tfBm makes it suitable for modeling data that looks stationary or approximately so. TfBm also exhibits \emph{semi-long range dependence}, i.e., the correlation between its increments decays essentially like a power law over fine/moderate scales (fractional or scale invariant behavior), but quasi-exponentially over large scales (see \eqref{e:semi-lrd}; cf.\ Giraitis et al.\ \cite{giraitis:kokoszka:leipus:2000}).

Research on tempered fractional processes has been expanding at a fast pace. Several models (ARTFIMA, tempered diffusion, tempered stable motions, tempered L\'{e}vy flights) have recently been studied and used in a wide range of modern applications such as in the physics and modeling of transient anomalous diffusion (Piryatinska et al.\ \cite{piryatinska:sanchev:woyczynski:2005}, Stanislavsky et al.\ \cite{stanislavsky:weron:weron:2008}, Sandev et al.\ \cite{sandev:chechkin:kantz:metzler:2015}, Wu et al.\ \cite{wu:deng:barkai:2016}, Chen et al.\ \cite{chen:wang:deng:2017}, Liemert et al.\ \cite{liemert:sandev:kantz:2017}, Chen et al.\ \cite{chen:wang:deng:2018}), geophysical flows (Meerschaert et al.\ \cite{meerschaert:zhang:baeumer:2008}, Meerschaert et al.\ \cite{meerschaert:sabzikar:phanikumar:zeleke:2014}) and finance (Dacorogna et al.\ \cite{dacorogna:muller:nagler:olsen:pictet:1993}, Granger and Ding \cite{granger:ding:1996}, Cont et al.\ \cite{cont:potters:bouchaud:1997}, Ling and Li \cite{ling:li:2001}, Zhang and Xiao \cite{zhang:xiao:2017}). See also Chevillard \cite{chevillard:2017} on turbulence modeling based on regularization. In addition, tempered fractional processes are closely related to near integrated models in econometrics (Phillips et al.\ \cite{phillips:moon:xiao:2001}). However, in spite of the fast growth of the literature on tempered fractional processes, little work has been done, in general, on their statistical methodology. In Meerschaert et al.\ \cite{meerschaert:sabzikar:phanikumar:zeleke:2014}, turbulence data is modeled in the Fourier domain within an ARTFIMA framework. In Anh, Heyde and Tieng \cite{anh:heyde:tieng:1999} and Gao et al.\ \cite{gao:anh:heyde:tieng:2001}, respectively, wavelet log-regression and Whittle-type methods are constructed for subclasses of tempered fractional processes assuming the tempering parameter $\lambda$ is known (see also Anh, Angulo and Ruiz-Medina \cite{anh:angulo:ruiz-medina:1999}). 

The wavelet transform is a powerful tool for the analysis of scale invariant phenomena (Flandrin \cite{flandrin:1992}, Wornell and Oppenheim \cite{wornell:oppenheim:1992}, Abry et al.\ \cite{abry:flandrin:taqqu:veitch:2003}, Percival and Walden \cite{percival:walden:2006}). Given an observed fractional time series $X$, estimation of the parameter $H$ can be conducted by a log-regression procedure that draws upon the scaling property of the sample wavelet variance, i.e.,
\begin{equation}\label{e:linear_wavelet_spectrum}
\frac{1}{n_j}\sum^{n_j}_{k=1} d_X^2(2^j,k) \simeq C 2^{j(2H+1)},
\end{equation}
where $d_X(2^j,k)$, $k = 1,\hdots,n_j$, is the wavelet transform at scale $2^j$ and shift $k$ (Veitch and Abry \cite{veitch:abry:1999}, Bardet et al.\ \cite{bardet:lang:moulines:soulier:2000}; see \eqref{e:nj=n/2^j} on $n_j$). Wavelet-based statistical inference has well-documented benefits, such as: $(i)$ for a sample of size $n$, the fast wavelet transform may reach computational complexity $O(n)$, which is even lower than that of the fast Fourier transform (Mallat \cite{mallat:1999}); $(ii)$ robustness with respect to contamination by polynomial trends (Craigmile et al.\ \cite{craigmile:guttorp:Percival:2005}); $(iii)$ modeling of stationary or stationary increment processes of any order in the same framework (Moulines et al.\ \cite{moulines:roueff:taqqu:2007:Fractals,moulines:roueff:taqqu:2007:JTSA,moulines:roueff:taqqu:2008}); $(iv)$ quasi-decorrelation of several families of stochastic processes (Masry \cite{masry:1993}, Bardet \cite{bardet:2002}), which often leads to Gaussian confidence intervals.

Following up on the work Boniece et al.\ \cite{boniece:sabzikar:didier:2018}, presented without proofs and containing preliminary computational studies, in this paper we construct the wavelet analysis of tfBm and the statistical theory for the first estimator for tfBm. We make the realistic assumption that only discrete time measurements are available. We characterize the phenomenon of semi-long range dependence in the wavelet domain, whereby the presence of the parameter $\lambda$ determines that, for large octaves, the wavelet spectrum of tfBm deviates from that of fBm (see Figure \ref{fig:semilrd}, left plot, and \eqref{e:approx_decorrelation}). In particular, the wavelet (or Fourier) spectrum of tfBm is not, in general, log-linear over a wide range of scales (cf.\ \eqref{e:linear_wavelet_spectrum}). Hence, the proposed estimator is based on a nonlinear regression procedure in the wavelet domain (see \eqref{e:def_estimator}; cf.\ Frecon et al.\ \cite{frecon:didier:pustelnik:abry:2016}). Its consistency and asymptotic normality are mathematically established (Theorem \ref{t:asympt}). Monte Carlo studies demonstrate the estimator's efficacy over finite samples for different instances of tfBm, at an acceptable computational cost.

In physical and modeling practice, it is also of great interest to identify the estimable range of $\lambda$ for given sample sizes (cf.\ Sabzikar et al.\ \cite{sabzikar:wang:phillips:2018}). We further propose to investigate this issue from a hypothesis testing perspective (cf.\ Giraitis et al.\ \cite{giraitis:kokoszka:leipus:teyssiere:2003}). We construct a simple and computationally efficient test for fBm vs tfBm alternatives based on comparing Hurst exponent estimates obtained from different regions of the sample wavelet spectrum. The asymptotic distribution of the test statistic is mathematically established under both the null and the alternative hypotheses (Theorem \ref{t:test}). Starting from the latter, we computationally develop power curves as a function of the sample size that hence quantify how distinguishable tfBm is from fBm for each value of $\lambda$, given $H$. We apply the estimation and testing methodology to geophysical flow data from the Red Cedar river in Michigan, USA, and conclude that tfBm is a significantly better model than fBm. This complements and justifies the exploratory data analysis presented in Boniece et al.\ \cite{boniece:sabzikar:didier:2018} (see Figure \ref{fig:semilrd}, right plot).

\begin{figure}[!htb]
    \centering
    \begin{minipage}{.5\linewidth}
        \centering
        \includegraphics[width=\linewidth]{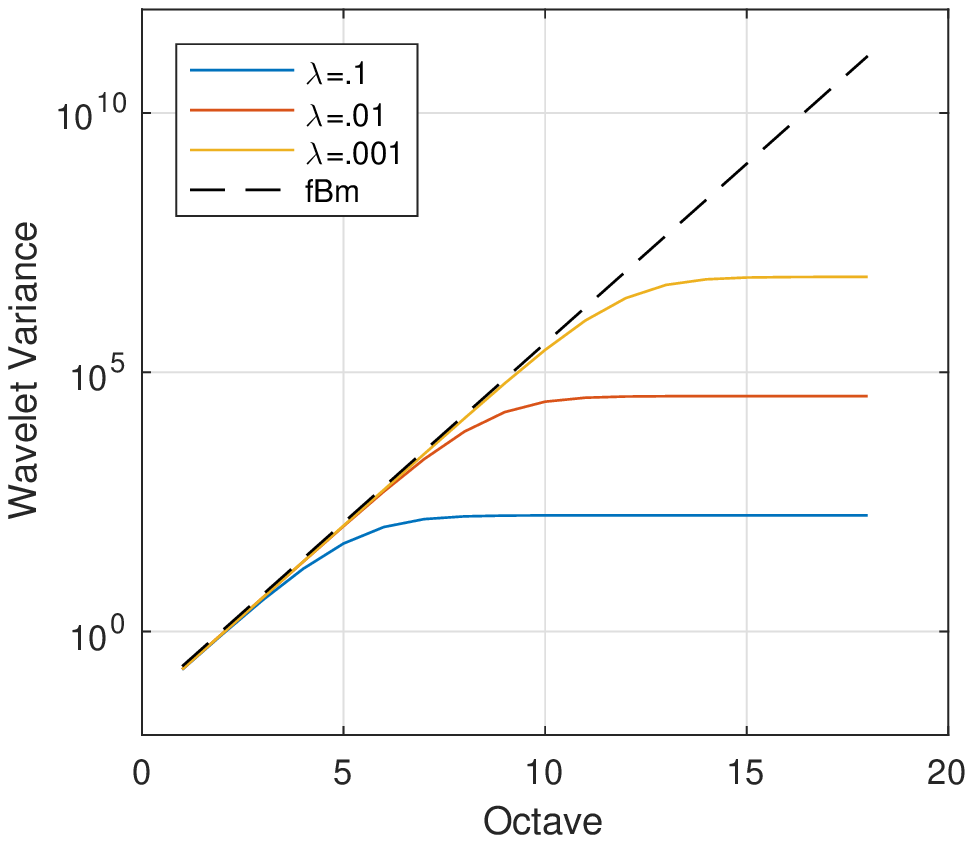}
        \label{fig:prob1_6_2}
    \end{minipage}%
    \begin{minipage}{.5\textwidth}
        \centering
        \includegraphics[width=.7\linewidth]{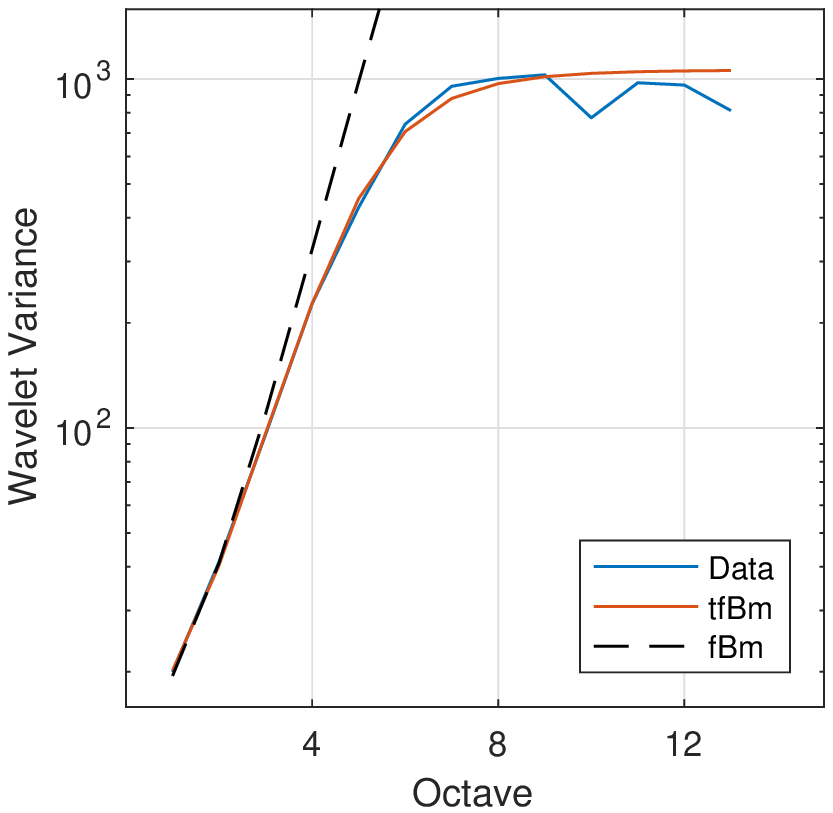}
        \label{fig:prob1_6_1}
    \end{minipage}
    \caption{\label{fig:semilrd} \textbf{Semi-long range dependence in the wavelet domain.} 
    The smaller the value of $\lambda>0$, the wider the range of octaves $j$ over which the wavelet spectrum of tfBm resembles that of fBm. Left plot: wavelet spectra of fBm and tfBm for various values of $\lambda$ (continuous time).  Right plot: sample wavelet spectra of the Red Cedar river data and best fBm and tfBm fits (plots reproduced from Boniece et al.\ \protect\cite{boniece:sabzikar:didier:2018}; see also Section \ref{s:application} for the data analysis).}
    \end{figure}

This work leads to a number of issues: in view of the convergence to stationarity of tfBm, is there a way of constructing a powerful test of tfBm vs ARMA-like alternatives?; is the parameter range $H > 1$, which is absent in the fBm framework, of physical interest? In addition, few statistical methods are available for other Gaussian or non-Gaussian tempered fractional models such as tempered fractional stable motion (Sabzikar and Surgailis \cite{sabzikar:surgailis:2018}), tempered Hermite processes (Sabzikar \cite{sabzikar:2015}), tempered stable processes (Cohen and Rosi\'nski \cite{cohen:rosinski:2007}, Rosi\'nski \cite{rosinski:2007}, Baeumer and Meerschaert \cite{baeumer:meerschaert:2010}, Bianchi et al.\ \cite{bianchi:rachev:kim:fabozzi:2010}, Gajda and Magdziarz \cite{gajda:magdziarz:2010}, Kienitz \cite{kienitz:2010}, Rosi\'nski and Sinclair \cite{rosinski:sinclair:2010}, Kawai and Masuda \cite{kawai:masuda:2012}, K\"{u}chler and Tappe \cite{kuchler:tappe:2013}) or tempered fractional Langevin dynamics (Zeng et al.\ \cite{zeng:yang:chen:2016}).

This paper is organized as follows. In Section \ref{s:preliminaries}, we recall basic properties of tfBm and construct its wavelet analysis. Section \ref{s:main} contains the main results of the paper. In Section \ref{s:asymptotic_theory_discrete}, we develop asymptotic theory for the point estimator and also the testing framework. In Section \ref{s:MC}, we present the Monte Carlo studies, and in Section \ref{s:application}, we model geophysical flow data. All proofs can be found in the Appendix, together with auxiliary results.

\section{Preliminaries}\label{s:preliminaries}

\subsection{Basic properties of tfBm}

Let $B_{H,\lambda}$ be a tfBm as in \eqref{e:tfBm_MA}. Starting from its corresponding moving average representation, it can be shown that its covariance function is given by
\begin{equation}\label{e:cov}
\textnormal{Cov}(B_{H,\lambda}(s),B_{H,\lambda}(t)) = \frac{\sigma^2}{2}\{C^2_t |t|^{2H}+ C^2_s |s|^{2H} - C^2_{t-s} |t-s|^{2H}\}, \quad s,t \in \bbR,
\end{equation}
where $C^2_t = \frac{2 \Gamma(2H)}{(2 \lambda |t|)^{2H}} - \frac{2 \Gamma(H+1/2)}{\sqrt{\pi}}\frac{1}{(2 \lambda|t|)^{H}} K_{H}(\lambda|t|)$, $t \neq 0$, $K_{\cdot}(\cdot)$ is the modified Bessel function of the second kind, and $C^2_0 = 0$ (Meerschaert and Sabzikar \cite{meerschaert:sabzikar:2013}, Proposition 2.3). TfBm admits the harmonizable (Fourier domain) representation
\begin{equation}\label{e:tfbm_harmonizable}
\Big\{ B_{H,\lambda}(t) \Big\}_{t \in \bbR} \stackrel{{\mathcal L}}= \Big\{\frac{\Gamma(H+\frac{1}{2})}{\sqrt{2\pi}}\int_\bbR \frac{e^{-ixt}-1}{(\lambda+ix)^{H+\frac{1}{2}}}\widetilde{B}(dx)\Big\}_{t \in \bbR},
\end{equation}
where $\widetilde{B}(dx)$ is a complex-valued Gaussian random measure such that $\widetilde{B}(-dx)= \overline{\widetilde{B}(dx)}$ and $\bbE |\widetilde{B}(dx)|^2 = \sigma^2 dx$. Expression \eqref{e:tfBm_MA} or \eqref{e:tfbm_harmonizable} implies that tfBm is Gaussian and has stationary increments. The increment process of tfBm, namely, $\{X(t) \}_{t\in\bbZ}:= \{B_{H,\lambda}(t+1)-B_{H,\lambda}(t) \}_{t\in\bbZ}$, is called \textit{tempered fractional Gaussian noise} (tfGn). Its covariance function $\gamma_X(h) = \bbE X(t)X(t+h)$, $h \in \bbZ$, has decay
\begin{equation}\label{e:semi-lrd}
\gamma_X(h) \sim \frac{\sigma^2\Gamma(H+\frac{1}{2})(e^{-\lambda}+e^{\lambda}-2)}{(2\lambda)^{H+1/2}} \frac{h^{H-1/2}}{ e^{\lambda h}}, \quad h \rightarrow \infty,
\end{equation}
(semi-long range dependence; cf.\ Chen et al.\ \cite{chen:wang:deng:2017}, Appendix 2).

\subsection{Wavelet analysis}

Throughout the paper, we make use of a wavelet multiresolution analysis (MRA; see Mallat \cite{mallat:1999}, chapter 7), which decomposes $L^2(\mathbb{R})$ into a sequence of \textit{approximation} (low-frequency) and \textit{detail} (high-frequency) subspaces $V_j$ and $W_j$, respectively, associated with different scales of analysis $2^{j}$. We always assume the MRA satisfies the technical conditions ($W1-W4$) (corresponding to expressions \eqref{e:N_psi}--\eqref{e:sum_k^m_phi(.-k)} in the Appendix), so such conditions are omitted in statements. Note that we work under assumptions that are closely related to the broad wavelet framework for the analysis of Gaussian stochastic processes laid out in Moulines et al.\ \cite{moulines:roueff:taqqu:2007:Fractals,moulines:roueff:taqqu:2007:JTSA,moulines:roueff:taqqu:2008}.
We further suppose the wavelet coefficients stem from Mallat's pyramidal algorithm (Mallat \cite{mallat:1999}, chapter 7). Initially, suppose an infinite time series
\begin{equation}\label{e:infinite_sample}
\{B_{H,\lambda}(\ell)\}_{\ell \in \bbZ},
\end{equation}
associated with the starting scale $2^j = 1$ (or octave $j = 0$), is available. Then, we can apply Mallat's algorithm to extract the so-named \textit{approximation} $(a(2^{j+1},\cdot))$ and \textit{detail} $(d(2^{j+1},\cdot))$ coefficients at coarser scales $2^{j+1}$ by means of an iterative procedure. In fact, as commonly done in the wavelet literature, we initialize the algorithm with the process
\begin{equation}\label{e:Btilde}
\widetilde{B}_{H,\lambda}(t) := \sum_{k \in \bbZ }B_{H,\lambda}(k)\phi(t-k) \in V_0, \quad  t\in \R.
\end{equation}
By the orthogonality of the shifted scaling functions $\{\phi(\cdot - k)\}_{k \in \bbZ}$,
\begin{equation}\label{e:a(0,k)}
a(2^0,k)= \int_\R \widetilde{B}(t)\phi(t-k)dt= B_{H,\lambda}(k), \quad k \in \bbZ,
\end{equation}
(see Stoev et al.\ \cite{stoev:pipiras:taqqu:2002}, proof of Lemma 6.1, or Moulines et al.\ \cite{moulines:roueff:taqqu:2007:JTSA}, p.\ 160; cf.\ Abry and Flandrin \cite{abry:flandrin:1994}, p.\ 33). In other words, the initial sequence, at octave $j = 0$, of approximation coefficients is given by the original time series. To obtain approximation and detail coefficients at coarser scales, we use Mallat's iterative procedure
\begin{equation}\label{e:Mallat}
a(2^{j+1},k) = \sum_{k'\in \mathbb{Z}} u_{k'-2k}a(2^j,k'),\quad d(2^{j+1},k) =\sum_{k'\in \mathbb{Z}}v_{k'-2k} a(2^j,k'), \quad j \in \mathbb{N} \cup \{0\}, \quad k \in \mathbb{Z},
\end{equation}
where the filter sequences $\{ u_k:=2^{-1/2}\int_\bbR \phi(t/2)\phi(t-k)dt \}_{k\in\bbZ}$, $\{v_k:=2^{-1/2}\int_\bbR\psi(t/2)\phi(t-k)dt\}_{k\in\bbZ}$ are called low- and high-pass MRA filters, respectively. Due to the assumed compactness of the supports of $\psi$ and the associated scaling function $\phi$ (see condition \eqref{e:supp_psi=compact}), only a finite number of filter terms is nonzero, which is convenient for computational purposes (Daubechies \cite{daubechies:1992}). {Hereinafter, we assume without loss of generality that $\text{supp}(\phi) = \text{supp}(\psi)=[0,T]$ (cf.\ Moulines et al \cite{moulines:roueff:taqqu:2007:JTSA}, p.\ 160).} Moreover, the wavelet (detail) coefficients $d(2^j,k)$ of tfBm can be expressed as
\begin{equation}\label{e:disc2}
d(2^j,k) = \sum_{\ell\in\bbZ}B_{H,\lambda}(\ell)h_{j,2^jk-\ell},
\end{equation}
where the filter terms are defined by
\begin{equation}\label{e:hj,l}
h_{j,\ell} =2^{-j/2}\int_\bbR\phi(t+\ell)\psi(2^{-j}t)dt.
\end{equation}
If we replace \eqref{e:infinite_sample} with the realistic assumption that only a finite length time series
\begin{equation}\label{e:finite_sample}
\{B_{H,\lambda}(k)\}_{k=1,\hdots,n}
\end{equation}
is available, writing $\widetilde{B}_{H,\lambda}^{(n)}(t) := \sum_{k=1}^{n}B_{H,\lambda}(k)\phi(t-k)$, the finite-sample wavelet coefficients $d^{(n)}(2^j,k)$ of $\widetilde{B}_{H,\lambda}^{(n)}(t) $ satisfy
\begin{equation}\label{e:d^n=d}
d^{(n)}(2^j,k) = d(2^j,k),\qquad \forall (j,k)\in\{(j,k): 2^{-j}T\leq k\leq 2^{-j}(n+1)-T\}.
\end{equation}
 In other words, such subset of finite-sample wavelet coefficients is not affected by the so-named border effect (cf.\ Craigmile et al.\ \cite{craigmile:guttorp:Percival:2005}, Percival and Walden \cite{percival:walden:2006}, Didier and Pipiras \cite{didier:pipiras:2010}). Moreover, by \eqref{e:d^n=d} the number of such coefficients at octave $j$ is given by $n_j = \lfloor 2^{-j}(n+1-T)-T\rfloor$. Hence, $n_j \sim 2^{-j}n$ for large $n$. Thus, for notational simplicity we suppose
 \begin{equation}\label{e:nj=n/2^j}
 n_{j} = \frac{n}{2^j}
 \end{equation}
 holds exactly and only work with wavelet coefficients unaffected by the border effect.

From the above assumptions, it can be shown that, for fixed scales $j$ and $j'$ and some constant $C_{N_\psi} > 0$,
\begin{equation}\label{e:approx_decorrelation}
|\Cov (d(2^j,k),d(2^{j'},k') |\leq C_{N_\psi}2^{(j+j')/2}\frac{|2^jk-2^{j'}k'|^{H-\frac{1}{2}}}{e^{\lambda{|2^jk-2^{j'}k'|}}},
\end{equation}
when $|2^jk-2^{j'}k'|$ is not too small (see Proposition \ref{p:tfbm_decorr}, $(i)$). This is the algebraic expression of the semi-long range dependence phenomenon in the wavelet domain. Let
\begin{equation}\label{e:Hj}
{\mathcal H}_j(x)=\sum_{\ell\in\bbZ}h_{j,\ell}e^{ix \ell} \in \bbC
\end{equation}
be the discrete Fourier transform of the sequence $\{h_{j,\ell}\}_{\ell \in\bbZ}$ appearing in \eqref{e:disc2}. The harmonizable representation of tfBm \eqref{e:tfbm_harmonizable}, expression \eqref{e:disc2} and the periodicity of ${\mathcal H}_j$ imply that
\begin{equation}\label{e:discspec}
\bbE d^2(2^j,k)= \frac{\sigma^2 \Gamma^2\left( H+\frac{1}{2} \right)}{2\pi}\int^{\pi}_{-\pi} \sum_{\ell \in \bbZ}\frac{|{\mathcal H}_j(x)|^2}{|\lambda^2 + (x+2\pi \ell)^2|^{H+\frac{1}{2}}}dx,
\end{equation}
regardless of $k \in \bbZ$ (Proposition \ref{p:wavelet_spectrum}). The wavelet spectrum of tfBm is depicted in Figure \ref{fig:semilrd}, left plot. One consequence of the semi-long range dependence phenomenon is the following. For small $\lambda>0$, the wavelet spectrum of $B_{H,\lambda}(t)$ mimics that of fBm for moderate scales $2^j$, namely, it is approximately linear with slope $2H+1$.  However,
\begin{equation}\label{e:disc_slrd2}
\lim_{j\to\infty}\bbE d^2(2^j,0) = \sum_{\ell\in\bbZ}\frac{\sigma^2 \Gamma^2(H+\frac{1}{2})}{|\lambda^2+(2\pi \ell)^2|^{H+\frac{1}{2}}},
\end{equation}
i.e., it tends to a constant at large octaves, where the transition from linearity to constancy is controlled by the tempering parameter $\lambda>0$ (Proposition \ref{p:wavelet_spectrum}).

\begin{remark}
Assuming a continuous time tfBm sample path is available, its wavelet transform is given by
\begin{equation}\label{e:d(2^j,k)_cont}
d(2^j,k) = \int_{\bbR} 2^{-j/2} \psi(2^{-j}t - k)B_{H,\lambda}(t) dt, \quad j \in \bbN \cup \{0\}, \quad k \in \bbZ.
\end{equation}
The integral in \eqref{e:d(2^j,k)_cont} is well-defined in the $L^2(P)$ sense because the following two conditions are met (Cram\'{e}r and Leadbetter \cite{cramer:leadbetter:1967}, p.\ 86). First, the covariance function $\bbE B_{H,\lambda}(s)B_{H,\lambda}(t)$ is continuous. Second, by an adaptation of the argument in Abry and Didier \cite{abry:didier:2018}, Proposition 3.1, it can also be shown that $\int_\R\int_{\bbR} |\psi(2^{-j}s - k)\psi(2^{-j}t - k) \bbE B_{H,\lambda}(s)B_{H,\lambda}(t)| ds dt < \infty$.

The continuous time wavelet spectrum is
\begin{equation}\label{e:cont_time_wavelet_spec}
\bbE d^2(2^j,k) = \frac{\Gamma^2(H+1/2)}{2\pi} \int_{{\Bbb R}} \frac{\sigma^2}{|\lambda^2 + (\frac{x}{2^j})^2|^{H + \frac{1}{2}}} |\widehat{\psi}(x)|^2 dx,
\end{equation}
regardless of $k$ (Proposition \ref{p:cont_time_wavelet_spec}).
However, unlike with fBm and related processes, the fundamental trait of the (wavelet or Fourier) log-spectrum of tfBm is not its slope over large scales. Hence, estimation based on the continuous time spectrum introduces non-negligible biases. Figure \ref{fig:disc_v_cont} illustrates the difference between the continuous and discrete time wavelet spectra of tfBm.

\begin{figure}[h]
\begin{center}
\includegraphics[width=.40\linewidth]{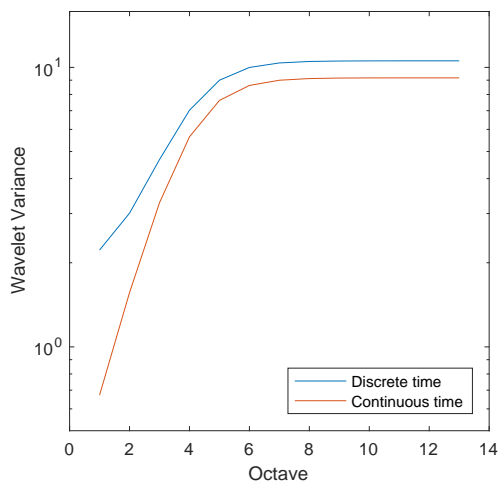}
\end{center}
\caption{\label{fig:disc_v_cont}{\textbf{\boldmath  A comparison of the discrete and continuous time wavelet spectra of tfBm for $H=0.15$, $\lambda=0.3$.}  Unlike with fBm, the discrepancy between the two spectra introduces non-negligible biases in continuous time-based modeling (the same conclusion holds in the Fourier domain).}}
\end{figure}
\end{remark}

\section{Main results}\label{s:main}

\subsection{Estimation and testing}\label{s:asymptotic_theory_discrete}

With classical fBm and related processes, only coarse scale information is generally of interest, which is given by the Hurst parameter $H$. By contrast, with tfBm, fine/moderate scale information is also relevant and is provided by $H$, whereas on coarse scales it is given by both $H$ and $\lambda$ (see \eqref{e:discspec} and \eqref{e:disc_slrd2}). This points to parametric estimation. An $M$-estimator (Van der Vaart \cite{vandervaart:1998}) can be constructed based on the log-wavelet spectrum of tfBm.

\begin{definition}\label{def:estimator}
For a number $n$ of observations of a tfBm, define the sample wavelet variance
\begin{equation}\label{e:W(2j)_tfBm}
{W(2^j) = \frac{1}{n_j}\sum^{n_j}_{k=1}d^2(2^j,k), \quad j = j_1,\hdots,j_m,}
\end{equation}
for some octave range $1 \leq j_1 \leq j_m \leq \lfloor \log_2 n \rfloor$, where the sample wavelet coefficients $d(2^j,k)$ are computed by means of expression \eqref{e:disc2} and $n_j$ is given by \eqref{e:nj=n/2^j}.

Let ${\boldsymbol \theta} = (H,\lambda,\sigma^2)$ be the parameter vector. We define a wavelet-based estimator by means of a weighted nonlinear log-regression
\begin{equation}\label{e:def_estimator}
\widehat{{\boldsymbol \theta}}_n :=\argmin_{{\boldsymbol \theta}}\sum_{j=j_1}^{j_m} w_j \left(\log_2 W(2^j)- \eta_j({\boldsymbol \theta})\right)^2 = \argmin_{{\boldsymbol \theta}} f_n({\boldsymbol \theta}),
\end{equation}
where the octaves $j_1,\ldots,j_m$ are chosen so as to capture both the fine/moderate-scale behavior and the limiting behavior \eqref{e:disc_slrd2}. In \eqref{e:def_estimator}, $\eta_j$ and $w_j$, $j = j_1,\hdots,j_m$, are appropriate choices of log-wavelet spectrum functions and regression weights.
\end{definition}

There are two effects to consider when picking $\eta_j$ and $w_j$ in \eqref{e:def_estimator}. First, estimation based on minimizing the distance between $\log_2 W(2^j)$ and $\log_2 \bbE d^2(2^j,0)$ is biased due to the fact that $\bbE \log_2 d^2(2^j,0)\neq \log_2 \bbE d^2(2^j,0)$. Second, the variance of the sample wavelet variance $W(2^j)$ changes across scales. In view of the near decorrelation property \eqref{e:approx_decorrelation}, we propose using bias-corrected and rescaled wavelet spectrum expressions in \eqref{e:def_estimator} by applying standard approximations to $\bbE \log W(2^j)$ and $\Var \log W(2^j)$ (e.g., Veitch and Abry \cite{veitch:abry:1999}, Wendt et al.\ \cite{wendt:didier:combrexelle:abry:2017}). More precisely, on one hand we set the wavelet spectrum functions to
\begin{equation}\label{e:bias_function}
\eta_j({\boldsymbol \theta}) = \log_2 \bbE d^2(2^j,0) + B(j), \quad j = j_1,\hdots,j_m,
\end{equation}
where
\begin{equation}\label{e:B(j)}
B(j) = \frac{\Psi(n_j/2)}{\log 2} - \log_2 \Big(\frac{n_j}{2}\Big)
\end{equation}
for $\Psi(z) = \Gamma'(z)/\Gamma(z)$, $z > 0$. On the other hand, we choose the weights $w_j = 1/2^{(j-1)/2}$, $j = j_1,\hdots,j_m$.

The consistency and asymptotic normality of the estimator \eqref{e:def_estimator} is established in the following theorem.
\begin{theorem}\label{t:asympt}
Let
\begin{equation}\label{e:theta0_in_intXi}
\Xi = \{(H,\lambda,\sigma^2): \min\{H,\lambda, \sigma^2\}>0\}
\end{equation}
be the parameter space, and let $\bth_0 \in U_0 \subset \Xi$ be the true parameter value, where $U_0$ is a bounded vicinity.
\begin{enumerate}
\item [$(i)$] Suppose the wavelet spectrum $(\bbE_{{\boldsymbol \theta} } W(2^{j_1}),\hdots, \bbE_{{\boldsymbol \theta} } W(2^{j_m}))$ is identifiable for ${\boldsymbol \theta} \in \overline{U}_0$, i.e.,
\begin{equation}\label{e:local_identifiability}
{\boldsymbol \theta} \neq {\boldsymbol \theta}' \Rightarrow (\bbE_{{\boldsymbol \theta} } W(2^{j_1}),\hdots, \bbE_{{\boldsymbol \theta} } W(2^{j_m})) \neq (\bbE_{{\boldsymbol \theta}' } W(2^{j_1}),\hdots, \bbE_{{\boldsymbol \theta}' } W(2^{j_m})).
\end{equation}
Then, for $f_n$ as in \eqref{e:def_estimator}, there is a sequence $\widehat{{\boldsymbol \theta}}_n$ of restricted minima of $f_n$ to $\overline{U}_0$ such that
\begin{equation}\label{e:consistency}
\widehat{{\boldsymbol \theta}}_n \stackrel{P}\rightarrow {\boldsymbol \theta}_0, \quad n \rightarrow \infty;
\end{equation}
\item [$(ii)$] if, in addition to the above,
\begin{equation}\label{e:det>0}
\det \left(\sum_{j=j_1}^{j_m} \frac{w_j\partial_\ell\Edj\partial_i\Edj }{[\Edj]^2} \right)_{i,\ell=1,2,3}> 0,
\end{equation}
then
\begin{equation}\label{e:asympt_normality}
\sqrt{n}(\widehat{{\boldsymbol \theta}}_n - {\boldsymbol \theta}_0) \stackrel{d}\rightarrow {\mathcal N}(0,\Sigma({\boldsymbol \theta}_0)), \quad n \rightarrow \infty,
\end{equation}
for some symmetric positive semidefinite matrix $\Sigma({\boldsymbol \theta_0})$.
\end{enumerate}
\end{theorem}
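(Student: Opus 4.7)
The plan is to follow the classical $M$-estimator framework (see, e.g., van der Vaart \cite{vandervaart:1998}), splitting the argument into (a) a uniform law of large numbers plus the identifiability assumption \eqref{e:local_identifiability} for consistency, and (b) a joint CLT for the sample wavelet variances at $j_1,\ldots,j_m$ combined with a delta-method/Hessian expansion for asymptotic normality.

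For part $(i)$, I would first show that for each fixed octave $j$, $W(2^j)\pto \Edj$. Since the wavelet coefficients $\{d(2^j,k)\}_{k}$ form a stationary Gaussian sequence whose covariances decay quasi-exponentially by \eqref{e:approx_decorrelation}, the average in \eqref{e:W(2j)_tfBm} has variance of order $1/n_j \to 0$. Combined with the digamma asymptotics $B(j) = \Psi(n_j/2)/\log 2 - \log_2(n_j/2) \to 0$, this gives pointwise convergence of $\log_2 W(2^j) - \eta_j(\bth)$ to $\log_2\Edj - \log_2 \E_{\bth}d^2(2^j,0)$. Compactness of $\overline U_0$ together with the continuity of $\bth \mapsto \E_{\bth}d^2(2^j,0)$ (visible from \eqref{e:discspec}) upgrades this to uniform convergence on $\overline U_0$, so $f_n$ converges uniformly in probability to a limit $f$ which, by the identifiability hypothesis, is uniquely minimized at $\bth_0$. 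A standard argmin argument on the compact set $\overline U_0$ then yields \eqref{e:consistency}.

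For part $(ii)$, the score expansion around $\bth_0$ gives
\begin{equation*}
\sqrt n(\widehat{\bth}_n - \bth_0) = -[\nabla^2 f_n(\widetilde{\bth}_n)]^{-1}\,\sqrt n\,\nabla f_n(\bth_0)
\end{equation*}
for some intermediate $\widetilde{\bth}_n$. By part $(i)$ and the uniform convergence above, $\nabla^2 f_n(\widetilde{\bth}_n)$ converges in probability to a deterministic matrix whose leading term at $\bth_0$ is, up to a positive multiplicative constant, the Gram-type matrix appearing in \eqref{e:det>0}, so the determinant condition ensures invertibility. For the score, the key ingredient is the joint CLT
\begin{equation*}
\sqrt{n}\bigl(\log_2 W(2^j) + B(j) - \log_2 \Edj\bigr)_{j=j_1,\ldots,j_m} \stackrel{d}\rightarrow \mathcal N(0,\Gamma(\bth_0)),
\end{equation*}
at which point the delta method and the linearity of $\nabla f_n(\bth_0)$ in these centered log-variances yield \eqref{e:asympt_normality} with $\Sigma(\bth_0)$ in the usual sandwich form.

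The main technical obstacle I foresee is the joint multivariate CLT for $(W(2^{j_1}),\ldots,W(2^{j_m}))$. While the quasi-exponential covariance decay in \eqref{e:approx_decorrelation} renders summability of within- and across-octave covariances essentially immediate, one must carefully quantify these bounds uniformly in $n$ in order to invoke a CLT for Hermite-rank-$2$ functionals of stationary Gaussian sequences, in the spirit of the wavelet CLT machinery of Moulines et al.\ \cite{moulines:roueff:taqqu:2007:JTSA}, and to produce the explicit covariance $\Gamma(\bth_0)$; the variance entries are expected to carry a $2^{(j+j')/2}$-type factor reflecting the fact that only $n_j = n/2^j$ coefficients contribute at octave $j$. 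The bias correction $B(j)$ enters precisely to absorb the Jensen gap $\E\log W(2^j) - \log\E W(2^j)$ to $o(1/\sqrt n)$ via the asymptotic expansion of the digamma function, ensuring that the mean of the score vanishes in the limit.
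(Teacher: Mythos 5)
Your proposal follows essentially the same route as the paper: consistency from convergence of the finitely many sample wavelet variances plus the identifiability condition \eqref{e:local_identifiability} (the paper runs this as a direct contradiction argument on the compact set $\overline{U}_0$ rather than through a uniform law of large numbers, but since the randomness enters only through the $m$ statistics $W(2^{j})$ the two are equivalent), and asymptotic normality via the score/Hessian expansion, the rank condition \eqref{e:det>0}, a joint CLT for $(W(2^{j_1}),\ldots,W(2^{j_m}))$, the delta method and Slutsky, giving the sandwich covariance. The only substantive difference is in how that joint CLT is proved: the paper's Lemma \ref{l:ANwspec} diagonalizes the Gaussian quadratic form and applies Cram\'er--Wold plus the Lindeberg CLT, controlling the largest eigenvalue through the decorrelation bound \eqref{e:approx_decorrelation}, rather than invoking Hermite-rank-$2$ machinery, and it treats the bias term $B(j)$ simply as $\mathcal{O}(n^{-1})$, hence negligible at the $\sqrt{n}$ scale.
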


\begin{remark}\label{r:identifiable}
Note that the wavelet spectrum (variance) $\E_{{\boldsymbol \theta}} W(2^j)$ is a function of the octave $j$ and of the parameter vector ${\boldsymbol \theta} =(H,\lambda,\sigma^2)$. The identifiability condition \eqref{e:local_identifiability} means that there is no pair of parameter vectors ${\boldsymbol \theta}_0$ and ${\boldsymbol \theta}_1$ for which $\E_{{\boldsymbol \theta}_0} W(2^j)  = \E_{{\boldsymbol \theta}_1} W(2^j)$ over the available octave range. Computational studies indicate that \eqref{e:local_identifiability} is mild and generally satisfied in practice (see, for example, Figure \ref{fig:id_surf}).

In turn, condition \eqref{e:det>0} is quite mild, and it amounts to requiring the full rank of a sum of rank 1 terms. Note that, for fixed $j$, the corresponding matrix in the sum in \eqref{e:det>0} can be written as $u_ju_j^\top$, where $u_j = c_j\left(\partial_1 \Edj,\partial_2 \Edj,\partial_3 \Edj\right)^\top$ for some constant $c_j$. So long as the vectors $u_j$ are linearly independent,  \eqref{e:det>0} holds (see Appendix D in Abry et al.\ \cite{frecon:didier:pustelnik:abry:2016} for a more detailed discussion in a similar context).
\end{remark}

\begin{figure}[h!]
\centering
\includegraphics[width=\linewidth]{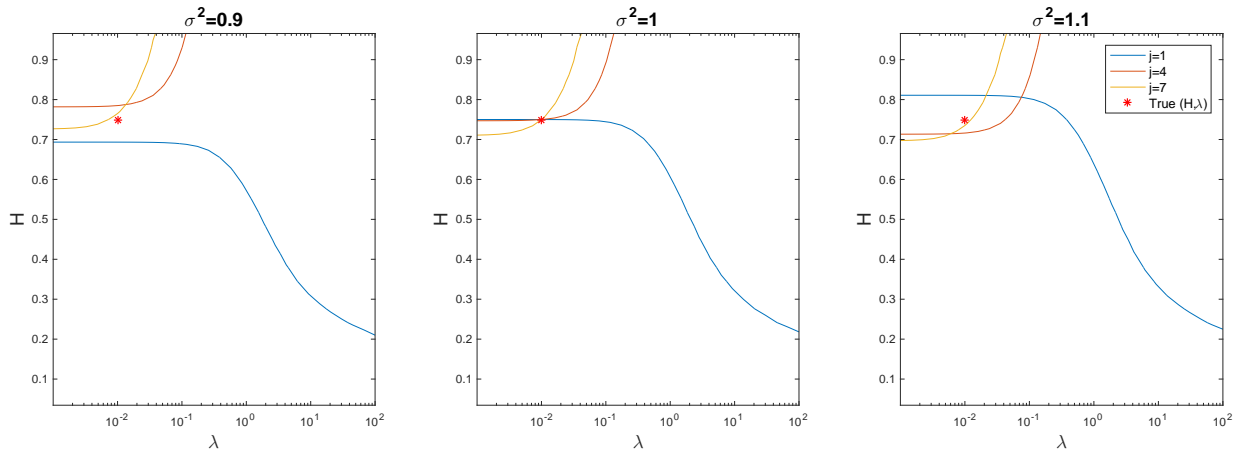}
\caption{\textbf{Wavelet domain identifiability.} {For a fixed, true parameter vector $\bth_0 =(H_0,\lambda_0,\sigma_0^2)= (.75,.01,1)$, each plot is associated with one value of $\sigma^2\in\{0.9,1.0,1.1\}$. The lines are the level curves of $\E_{\bth} W(2^j)- \E_{\bth_0} W(2^j)=0$ as a function of $(H,\lambda)$ for $j=1,4,7$. In the middle plot, associated with the true value $\sigma^2_0 = 1$, the three level curves only intersect at the true pair $(H_0,\lambda_0) = (.75,.01)$. Moreover, the intersection of the level curves in the first and third plots is empty. Hence, the inclusion of $m\geq 3$ octaves guarantees the intersection of the level surfaces $\cap_{j=j_1}^{j_m}\{\bth: \E_{\bth}W(2^j)- \E_{\bth_0} W(2^j)=0\}$ is empty for $\bth\in V\setminus\{\bth_0\}$, i.e., condition \eqref{e:local_identifiability} is satisfied locally.}}\label{fig:id_surf}
\end{figure}

In practice, it is of interest to test whether a sample path comes from a fBm as opposed to a tfBm alternative, which can be naturally framed as
\begin{equation}\label{e:H0}
H_0: \textnormal{ the process is a fBm} \quad \textnormal{vs} \quad H_1: \textnormal{ the process is a tfBm}.
\end{equation}
If the null hypothesis of a fBm is rejected, then estimation of tfBm can be conducted by means of the estimator \eqref{e:def_estimator}. A simple test consists of comparing Hurst exponent estimates obtained over different octave ranges. For fBm, the estimates should be approximately constant, whereas for tfBm, they should significantly differ assuming $\lambda$ is not too small.

As to maximize test power, we propose to estimate the Hurst exponent over a fixed pair of small octaves and compare it with another estimate obtained over a range of large octaves. In regard to the latter estimate, note that for both fBm and tfBm the slope of the log-spectrum stabilizes in the limit. Hence, a traditional wavelet log-regression procedure is suitable. However, over small scales, either for fBm or tfBm, there is significant discrepancy between discrete and continuous time spectra (see Figure \ref{fig:disc_v_cont}). For this reason, over small scales, we propose to parametrically fit the discrete time spectrum by means of wavelet-based $M$-estimation.

Starting from the null hypothesis of a fBm, one can define an $M$-estimator $(\widehat{H},\widehat{\sigma^2})$ of $(H,\sigma^2)$ over (small) octaves $j_1,\ldots,j_m$ via
\begin{equation}\label{eq:def_Mest_fbm}
(\widehat{H}(2^{j_1},\ldots,2^{j_m}),\widehat{\sigma^2}(2^{j_1},\ldots,2^{j_m})):= \argmin_{H,\sigma^2}\sum_{j=j_1}^{j_m}\left(\log_2W(2^j) - \widetilde{\eta}_j(H,\sigma^2)\right)^2.
\end{equation}
In \eqref{eq:def_Mest_fbm}, we set
\begin{equation}\label{e:eta-tilde}
\widetilde{\eta}_j(H,\sigma^2) =\log_2(\sigma^2/C^2(H)) + \log_2\int_{-\pi}^\pi \sum_{\ell\in\Z}\frac{|\mathcal{H}_j(x)|^2}{|x+2\pi \ell|^{2H+1}}dx + B(j),
\end{equation}
where $B(j)$ is given by \eqref{e:B(j)} and $C(H) = \sqrt{\pi/\left( H\Gamma(2H)\sin(\pi H)\right)}$. In other words, $\widetilde{\eta}_j$ is the bias-corrected log-spectrum function of a fBm observed in discrete time. Moreover, for large scales, let $\widetilde{H}(a(n)2^{j_3},a(n)2^{j_4})$ be a traditional wavelet log-regression estimator
\begin{equation}\label{eq:def_wavereg}
\widetilde{H}(a(n)2^{j_3},a(n)2^{j_4}):= \frac{1}{2}\left(\sum^{j_4}_{j=j_3} \varpi_{j} \log_2 W(a(n)2^j) - 1\right), \quad 0 < j_3 < j_4 \leq \lfloor \log_2 n \rfloor.
\end{equation}
In \eqref{eq:def_wavereg}, the linear regression weights satisfy
\begin{equation}\label{e:sum-varpi=0,sum-j*varpij=1}
\sum^{j_4}_{j=j_3} \varpi_j = 0, \quad \sum^{j_4}_{j=j_3}j \varpi_j = 1
\end{equation}
(Veitch and Abry \cite{veitch:abry:1999}, Abry et al.\ \cite{abry:flandrin:taqqu:veitch:2000}, Bardet \cite{bardet:2002}, Stoev et al.\ \cite{stoev:pipiras:taqqu:2002}) and $a(n)$ is a slow dyadic scaling factor (see \eqref{e:seq_decay_assumption}). Consider the test statistic defined by
\begin{equation}\label{e:test_stat}
T_n = \widehat{H}(2^{j_1},2^{j_2})-\widetilde{H}(a(n)2^{j_3},a(n)2^{j_4}), \quad j_1 < j_2< j_3 < j_4,
\end{equation}
where large values of $T_n$ are associated with evidence against $H_0$ in \eqref{e:test_stat}. In the following proposition, the asymptotic distribution of $T_n$ is established under both $H_0$ and $H_1$. {Under $H_1$, the observed stochastic process is a tfBm. Hence, the objective function \eqref{eq:def_Mest_fbm} is misspecified. So, we make the additional assumption that $j_1,j_2$ are chosen such that
\begin{equation}\label{e:slope_assumption}
\inf_{H\in(0,1)}\frac{\E_{H} d_{\textnormal{fBm}}^2(2^{j_2},0)}{\E_{H}d_{\textnormal{fBm}}^2(2^{j_1},0)}\leq \frac{\E_{H,\lambda} d_{\textnormal{tfBm}}^2(2^{j_2},0)}{\E_{H,\lambda}d_{\textnormal{tfBm}}^2(2^{j_1},0)} \leq\frac{\E_{H} d_{\textnormal{fBm}}^2(2^{j_2},0)}{\E_{H}d_{\textnormal{fBm}}^2(2^{j_1},0)}.
\end{equation}
Condition \eqref{e:slope_assumption} ensures that there is a pair
\begin{equation}\label{e:(H',sigma'^2)}
(H',\sigma'^2)
\end{equation}
in the fBm parametrization such that
\begin{equation}\label{e:Ed2_fbm=Ed2_tfBm}
\log_2\E_{H',\sigma'^2} d^2_{\text{fBm}}(2^j,0) = \log_2\E_{\bth_0}d^2_{\text{tfBm}}(2^j,0), \quad j=j_1,j_2,
\end{equation}
where $\bth_0=(H,\lambda,\sigma^2)$ is the true parameter vector of the underlying tfBm (see Lemma \ref{Hest_AN} and also Remark \ref{r:on_conditions_for_HT}).}
\begin{theorem}\label{t:test}
Consider the hypotheses \eqref{e:H0} and let $T_n$ be the test statistic defined by \eqref{e:test_stat}.  Assume that the condition \eqref{e:Npsi>alpha} holds (for $\alpha$ as in \eqref{e:psihat_is_slower_than_a_power_function}), and that the dyadic scaling factor $a(n)$ satisfies
\begin{equation}\label{e:seq_decay_assumption}
\frac{a(n)}{n} + \frac{n}{a(n)^{1 + 2 \beta}} \to 0, \quad n \rightarrow \infty,
\end{equation}
for some $\beta \in (1,\min\{\alpha - \varepsilon_0,2\}]$ and some small $\varepsilon_0 > 0$.
\begin{enumerate}
\item [$(i)$] Under $H_0$, {suppose in addition that the rank condition
\begin{equation}\label{e:det>0_hyptest}
\det\left(\sum_{j=j_1,j_2} \frac{w_j\partial_\ell\E_{\bth} d_{\textnormal{fBm}}^2(2^j,0)\partial_i\E_{\bth} d_{\textnormal{fBm}}^2(2^j,0) }{[\E_{\bth} d_{\textnormal{fBm}}^2(2^j,0)]^2}\right)_{i,\ell={1,2}}> 0
\end{equation}
holds at $\bth=(H,\sigma^2)$.} Then,
\begin{equation}\label{e:sqrt(n)T->N(0,tau^2)}
\sqrt{n} \hspace{1mm}T_n \stackrel{d}\rightarrow {\mathcal N}(0,\tau^2_0(H)), \quad n \rightarrow \infty,
\end{equation}
for some $\tau^2_0(H) > 0$;
\item [$(ii)$] under $H_1$ {and \eqref{e:slope_assumption}, suppose in addition that \eqref{e:det>0_hyptest} holds at $\bth=(H',\sigma'^2)$ as in \eqref{e:(H',sigma'^2)}.} Then, there are $\vartheta \in (\frac{1}{2},\frac{3}{2})$ and $\tau^2_1(H,\lambda) > 0$ such that
\begin{equation}\label{e:sqrt(n)(T-vartheta)->N(0,tau^2)}
\sqrt{n} \hspace{1mm}(T_n - \vartheta) \stackrel{d}\rightarrow {\mathcal N}(0,\tau^2_1(H,\lambda)), \quad n \rightarrow \infty,
\end{equation}
for some $\tau^2_1(H,\lambda) > 0$.
\end{enumerate}
\end{theorem}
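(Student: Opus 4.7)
The strategy is to establish the joint asymptotic normality of $(\widehat{H}(2^{j_1},2^{j_2}),\widetilde{H}(a(n)2^{j_3},a(n)2^{j_4}))$ under each hypothesis and then read off the law of the difference via the continuous mapping/delta method. Each component will be treated separately; joint normality of the two will follow from a Cram\'er--Wold argument applied to linear combinations of the underlying vector of sample wavelet variances $(W(2^{j_1}),W(2^{j_2}),W(a(n)2^{j_3}),\hdots,W(a(n)2^{j_4}))$, with the cross-scale covariances controlled by the near-decorrelation bound \eqref{e:approx_decorrelation}, whose quasi-exponential decay under tempering makes fine- and coarse-scale blocks asymptotically uncorrelated.

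The fine-scale $M$-estimator $\widehat{H}$ will be analyzed by specializing the machinery behind Theorem \ref{t:asympt} to the fBm subparametrization $(H,\sigma^2)$. Under $H_0$, the criterion \eqref{eq:def_Mest_fbm} is correctly specified; identifiability via \eqref{e:det>0_hyptest} together with a CLT for $(W(2^{j_1}),W(2^{j_2}))$ obtained from the near-decorrelation bound \eqref{e:approx_decorrelation} specialized to fBm yields $\sqrt{n}(\widehat{H}-H)\stackrel{d}\rightarrow\mathcal{N}$. Under $H_1$ the criterion is misspecified for tfBm data, and condition \eqref{e:slope_assumption} is precisely the geometric condition guaranteeing the existence of a pseudo-true fBm parameter $(H',\sigma'^2)$ as in \eqref{e:(H',sigma'^2)} satisfying \eqref{e:Ed2_fbm=Ed2_tfBm}; Huber-type theory for misspecified $M$-estimators (Van der Vaart \cite{vandervaart:1998}, Ch.\ 5), combined with \eqref{e:det>0_hyptest} at $(H',\sigma'^2)$, then gives $\sqrt{n}(\widehat{H}-H')\stackrel{d}\rightarrow\mathcal{N}$.

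The coarse-scale log-regression estimator $\widetilde{H}$ is handled by adapting the scheme of Moulines et al.\ \cite{moulines:roueff:taqqu:2007:JTSA,moulines:roueff:taqqu:2008}. Under $H_0$, the fBm log-spectrum is exactly linear in $j$, so the weight constraints \eqref{e:sum-varpi=0,sum-j*varpij=1} annihilate the deterministic part of the regression and a CLT for the sample wavelet variances at growing scales $a(n)2^{j}$ (again from \eqref{e:approx_decorrelation}, which at growing scales delivers near-independence of the Gaussian coefficients), combined with the delta method applied to $\log_2 W$, delivers asymptotic normality centered at $H$. Under $H_1$, \eqref{e:disc_slrd2} implies the tfBm log-spectrum tends to a constant at large scales, so the weight constraints in \eqref{e:sum-varpi=0,sum-j*varpij=1} force $\widetilde{H}\to -1/2$ in probability; the bias-control condition $n/a(n)^{1+2\beta}\to 0$ in \eqref{e:seq_decay_assumption} is used to verify that the residual deviation of $\log_2 \E_{\bth_0}d^2_{\textnormal{tfBm}}(a(n)2^j,0)$ from its horizontal asymptote is of smaller order than the standard deviation of $\widetilde{H}$, and the variance analysis proceeds exactly as under $H_0$.

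Putting the pieces together: under $H_0$ both estimators converge to $H$, so $T_n\to 0$; under $H_1$ one has $\widehat{H}\to H'\in(0,1)$ and $\widetilde{H}\to -1/2$, giving $\vartheta=H'+1/2\in(1/2,3/2)$ as required. The delta method applied to the coordinate map $(u,v)\mapsto u-v$, together with the joint CLT and the approximate block-independence between the fine- and coarse-scale wavelet variances, then yields both \eqref{e:sqrt(n)T->N(0,tau^2)} and \eqref{e:sqrt(n)(T-vartheta)->N(0,tau^2)}. The main obstacle I anticipate is the \emph{joint} CLT across the two disparate scale regimes: the fine-scale sample wavelet variances fluctuate at order $n^{-1/2}$ while the coarse-scale ones fluctuate at order $(a(n)/n)^{1/2}$, so an honest simultaneous normalization requires carefully tracking $a(n)$ in the asymptotic variances $\tau_0^2,\tau_1^2$ and verifying via \eqref{e:approx_decorrelation} that the cross covariance vanishes at the appropriate rate. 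A secondary but delicate point is the misspecification analysis under $H_1$, which requires a two-parameter local expansion of \eqref{eq:def_Mest_fbm} around $(H',\sigma'^2)$ and careful use of \eqref{e:slope_assumption} to establish uniqueness of that pseudo-true vector over the relevant octave pair $(j_1,j_2)$.
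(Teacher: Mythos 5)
Your proposal follows essentially the same route as the paper: Theorem \ref{t:test} is proved there via Lemma \ref{Hest_AN}, which establishes the joint asymptotic normality of the fine-scale $M$-estimator and the coarse-scale log-regression estimator by a Cram\'er--Wold/Lindeberg argument on the combined vector of wavelet variances with asymptotically vanishing cross-covariance between the two scale blocks, a mean-value (misspecified $M$-estimation) expansion around the pseudo-true fBm parameter $(H',\sigma'^2)$ furnished by \eqref{e:slope_assumption} under $H_1$, the limit $\widetilde{H}_n \pto -\tfrac{1}{2}$ from the flattening of the tfBm spectrum, and $\vartheta = H'+\tfrac{1}{2}$, exactly as you outline. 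The one imprecision is that under $H_0$ the cross-scale decorrelation cannot be taken from the tempered bound \eqref{e:approx_decorrelation}: the paper instead uses the polynomial fBm bound \eqref{e:fbm_decorr} together with the fixed-scale/growing-scale bound of Lemma \ref{single_scale_limit_decorr}, which is precisely where the hypothesis $N_\psi \geq 2$ in \eqref{e:Npsi>alpha} enters --- the rate verification you correctly flag as the main remaining obstacle.
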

The weak convergence \eqref{e:sqrt(n)T->N(0,tau^2)} suggests the asymptotically valid rejection region
\begin{equation}\label{e:Ralpha}
R_{\epsilon}: T_n > z_{\epsilon}\frac{\tau_0(H)}{\sqrt{n}}
\end{equation}
at level $0 < \epsilon < 1$ (see Section \ref{s:MC}, \textbf{Hypothesis testing}, on finding the standard error).

\begin{remark}\label{r:on_conditions_for_HT}
Condition \eqref{e:det>0_hyptest}, akin to \eqref{e:det>0}, is necessary to establish the asymptotic distribution of $T_n$ under each hypothesis, at different values of $\bth$. Assumption \eqref{e:slope_assumption} is mild and easily met in practice (i.e., for physically relevant tfBm parameter values) for $j_1=1$, $j_2=2$. The lefthand inequality states that the initial slope of the log-wavelet spectrum of tfBm is comparable to that of the whole class of fBm (any $H \in (0,1)$). This ensures the tempering parameter $\lambda$ is not too large; otherwise, the log-wavelet spectrum of tfBm quickly flattens as a function of the octave $j$, which makes tfBm and fBm very distinguishable from one another. Moreover, the lefthand inequality in \eqref{e:slope_assumption} guarantees a sequence of minima in \eqref{eq:def_Mest_fbm} exists. On the other hand, as a result of tempering, for any fixed $j_1$ there always exists a $j_2$, usually small, so that the righthand inequality is satisfied. This is a consequence of the fact that the slope of the log-wavelet spectrum of tfBm converges to zero (cf.\ Figure \ref{fig:semilrd}, left plot).
\end{remark}

\begin{remark}
Picking the scales $2^{j_1}$, $2^{j_2}$, $a(n)2^{j_3}$, $a(n)2^{j_4}$ is simple in practice. Computational experiments confirm that setting $j_1 = 1$ and $j_2 = 2$ is a good choice, in view of the large number of terms that go into the sample wavelet variances $W(2^{1})$ and $W(2^{2})$. On the other hand, based on related work on the wavelet analysis of fractional processes, the choice of scales for the wavelet regression component of the test statistic \eqref{e:test_stat} is well-understood. An example of a scaling sequence satisfying \eqref{e:seq_decay_assumption} for large enough $n$ is
$$
a(n) := 2 \lfloor n^{\mu} \rfloor, \quad \frac{1}{1+2 \beta} < \mu < 1.
$$
In other words, a low parameter value $H$ implies that $a(n)$ must grow slowly by comparison to $n$. For a fixed octave range $\{j_3,\hdots,j_4\}$ associated with an initial scaling factor value $a(n_0) =1$ and sample size $n_0$, define the scale range $\{2^{j_3(n)},\hdots,2^{j_4(n)}\} = \{a(n)2^{j_3}, \hdots,a(n)2^{j_4}\}$ for a general sample size $n$. Then, under \eqref{e:seq_decay_assumption}, for every $n$ the range of useful octaves is constant and given by $j_4(n) - j_3(n) = j_4 - j_3$, where the new octaves are $j_\ell(n) = \log_2 a(n) + j_\ell \in \bbN$, $\ell = 3,4$. Selecting $\mu $ close to $1$ leads to using the coarsest available range of scales, hence favoring low bias in the estimation of the Hurst eigenvalues at the price of a larger variance. In contrast, picking $\mu$ close to the lower bound $1/(1+2 \beta)$ minimizes the variance at the cost of a larger bias. For further discussion of the choice of $j_3$ and $j_4$ and the scaling sequence $a(n)$ in practice, see, for instance, Wendt et al.\ \cite{wendt:didier:combrexelle:abry:2017}, Remark 3.1, or Abry and Didier \cite{abry:didier:2018}, Remark 4.2.
\end{remark}

\subsection{Monte Carlo studies}\label{s:MC}


\noindent \textbf{Numerical experiment setting.} The performance of the wavelet estimator $\widehat{{\boldsymbol \theta}} = (\widehat{H},\widehat{\lambda},\widehat{\sigma^2})$ was assessed through Monte Carlo studies over 5000 independent realizations of tfBm paths. The chosen parameter values were taken from $(H,\lambda, \sigma^2) \in \{0.15,0.35,0.65,0.85\} \times \{0.001,0.01,0.1,1\} \times \{1,10\}$. In all cases, each tfBm path was generated by circulant matrix embedding using tfGn (Davies and Harte \cite{davies:harte:1987}, Wood and Chan \cite{wood:chan:1994}). The analysis was conducted using orthogonal least asymmetric Daubechies wavelets computed through Mallat's pyramidal algorithm \eqref{e:Mallat}. Unless otherwise indicated, we set $N_\psi=2$ in computational studies.


To implement the discrete time wavelet variances \eqref{e:discspec} appearing in the objective function \eqref{e:def_estimator}, several numerical considerations were made. First, the filter sequences $\{h_{j,\ell}\}_{\ell \in\Z}$ require computation and numerical integration of the wavelet and scaling functions $\psi$ and $\phi$. However, this needs only be done once, and the computed filters can be stored.

Second, for larger $j$, the filters $\mathcal{H}_j(x)$ have a larger number of non-vanishing Fourier coefficients (related to high frequency behavior) and so for increasing $j$, successively finer partitions $P(j)$ of $[-\pi,\pi]$ for the numerical integration of the wavelet spectrum \eqref{e:discspec} were implemented. Unreported computational experiments suggested that a mesh size $\|P(j)\|=2\pi/2^{\max\{10,j-3\}}$ was sufficiently large to yield negligible quadrature error, which was used for this study. Additionally, the infinite sum appearing in the integrand \eqref{e:discspec} was truncated at $|\ell| \leq 10$. To approximate truncated terms, noting that for $K$ sufficiently large we have  $\sum_{|\ell|>K}{|\lambda^2 + (x+2\pi \ell)^2|^{-(H+\frac{1}{2})}}\approx \sum_{|\ell|>K}{|\lambda^2 + (2\pi \ell)^2|^{-(H+\frac{1}{2})}}$ for all $x\in [-\pi,\pi]$, we used the integral ${2\int_{10}^\infty \Gamma^2(H+\frac{1}{2})(2\pi)^{-1}|\lambda^2+(2\pi \ell)^2|^{-(H+\frac{1}{2})} d\ell}$. Minimization of the objective function \eqref{e:def_estimator} was implemented using the Matlab routine \texttt{fmincon} with constraints $0 < H < 1$, $0 < \lambda < 5$.  In all cases the initial values were set to $(\widehat{H}_{\textnormal{init}},\widehat{\lambda}_{\textnormal{init}}) = (1/2,0.03)$. With regards to the octaves $j_1,\ldots,j_m$ in \eqref{e:def_estimator}, the largest octave $j_m$ is especially important in determining the lower bound of possible $\lambda$ that can be estimated; larger values of $j_m$ correspond to improved estimation of increasingly smaller $\lambda$ (cf.\ Figure \ref{fig:prob1_6_2}, left plot). However, due to the aforementioned increasingly finer partitions $P(j)$, the inclusion of larger values of $j_m$ adds to the computational cost of the optimization. Thus, for each value of $n$, we set $j_m=\min\{j_{\max},12\}$, where $j_{\max}$ is the largest available octave in the sample. 
In any case, all the remaining octaves are given by $(j_1,j_2,\ldots,j_{m-1})=(1,2,\ldots, j_{m}-1)$.\\

\noindent \textbf{Bias correction and number of vanishing moments.} In Table \ref{MC_212_tables}, one can see that the estimator performance is, in general, overall best with both bias correction and $N_\psi = 2$. Similarly to the classical fBm case (cf.\ Abry et al.\ \cite{abry:flandrin:taqqu:veitch:2003}), picking $N_\psi = 2$ instead of $N_\psi = 1$ is more crucial when $H>1/2$ and small $\lambda$. This is due to the fact that, for these parameter values, tfBm behaves like fBm with long memory over a large number octaves. Unreported numerical results indicated that estimation with $N_\psi > 2$ tends to diminish the estimation accuracy for most parameter values, especially for small $\lambda$.\\

	\begin{table}[tb]
	 \begin{minipage}{.5\linewidth}
	   \centering
	   \resizebox{\linewidth}{!}{%
\begin{tabular}{|l|c|c|c|c|}
\hline
\multicolumn{5}{|c|}{$N_\psi=1$, no bias correction}\\\hline&$H=0.15$&$H=0.35$&$H=0.65$&$H=0.85$\\\hline
\multirow{2}{*}{$\lambda=0.001$}&(.1511, .0032)&(.3542, .0025)&(.6671, .0023)&(.8837, .0023)\\
&(.0041, .0032)&(.0156, .0024)&(.0232, .0017)&(.0229, .0013)\\\hline
\multirow{2}{*}{$\lambda= 0.01$}&(.1507, .0134)&(.3533, .0127)&(.6585, .0123)&(.8599, .0119)\\
&(.0035, .0052)&(.0130, .0041)&(.0286, .0037)&(.0264, .0031)\\\hline
\multirow{2}{*}{$\lambda=  0.1$}&(.1498, .1103)&(.3499, .1067)&(.6642, .1076)&(.8824, .1111)\\
&(.0034, .0159)&(.0080, .0107)&(.0330, .0119)&(.0622, .0173)\\\hline
\multirow{2}{*}{$\lambda=    1$}&(.1480, 1.122)&(.3459, 1.046)&(.6421, 1.024)&(.8387, 1.019)\\
&(.0053, .2161)&(.0103, .0791)&(.0186, .0434)&(.0260, .0325)\\\hline
\end{tabular}

}\\ \ \\
\resizebox{\linewidth}{!}{%
\begin{tabular}{|l|c|c|c|c|}
\hline
\multicolumn{5}{|c|}{$N_\psi=1$, with bias correction}\\\hline&$H=0.15$&$H=0.35$&$H=0.65$&$H=0.85$\\\hline
\multirow{2}{*}{$\lambda=0.001$}&(.1521, .0018)&(.3570, .0015)&(.6674, .0015)&(.8822, .0017)\\
&(.0047, .0025)&(.0165, .0020)&(.0231, .0015)&(.0230, .0012)\\\hline
\multirow{2}{*}{$\lambda= 0.01$}&(.1502, .0103)&(.3504, .0103)&(.6508, .0103)&(.8517, .0102)\\
&(.0036, .0047)&(.0136, .0039)&(.0282, .0034)&(.0259, .0028)\\\hline
\multirow{2}{*}{$\lambda=  0.1$}&(.1502, .1007)&(.3499, .1001)&(.6517, .1005)&(.8538, .1013)\\
&(.0034, .0153)&(.0079, .0105)&(.0313, .0116)&(.0618, .0169)\\\hline
\multirow{2}{*}{$\lambda=    1$}&(.1502, 1.022)&(.3503, 1.004)&(.6502, 1.000)&(.8501, 1.001)\\
&(.0051, .1739)&(.0103, .0738)&(.0188, .0415)&(.0264, .0312)\\\hline
\end{tabular}
}\\
\end{minipage}
	 \begin{minipage}{.5\linewidth}
	   \centering
	   \resizebox{\linewidth}{!}{%
\begin{tabular}{|l|c|c|c|c|}
\hline
\multicolumn{5}{|c|}{$N_\psi=2$, no bias correction}\\\hline&$H=0.15$&$H=0.35$&$H=0.65$&$H=0.85$\\\hline
\multirow{2}{*}{$\lambda=0.001$}&(.1513, .0054)&(.3528, .0043)&(.6538, .0032)&(.8538, .0028)\\
&(.0037, .0060)&(.0112, .0045)&(.0207, .0035)&(.0204, .0029)\\\hline
\multirow{2}{*}{$\lambda= 0.01$}&(.1506, .0144)&(.3516, .0133)&(.6537, .0125)&(.8541, .0122)\\
&(.0033, .0077)&(.0104, .0059)&(.0228, .0048)&(.0242, .0041)\\\hline
\multirow{2}{*}{$\lambda=  0.1$}&(.1502, .1105)&(.3505, .1075)&(.6554, .1066)&(.8631, .1072)\\
&(.0031, .0176)&(.0070, .0130)&(.0180, .0110)&(.0352, .0131)\\\hline
\multirow{2}{*}{$\lambda=    1$}&(.1487, 1.102)&(.3468, 1.042)&(.6443, 1.023)&(.8425, 1.017)\\
&(.0047, .193)&(.0091, .0756)&(.0161, .0432)&(.0218, .0328)\\\hline
\end{tabular}
}\\ \ \\
	   \resizebox{\linewidth}{!}{%
\begin{tabular}{|l|c|c|c|c|}
\hline
\multicolumn{5}{|c|}{$N_\psi=2$, with bias correction}\\\hline&$H=0.15$&$H=0.35$&$H=0.65$&$H=0.85$\\\hline
\multirow{2}{*}{$\lambda=0.001$}&(.1523, .0031)&(.3564, .0025)&(.6584, .0020)&(.8576, .0017)\\
&(.0041, .0048)&(.0124, .0037)&(.0206, .0030)&(.0199, .0025)\\\hline
\multirow{2}{*}{$\lambda= 0.01$}&(.1503, .0102)&(.3499, .0101)&(.6495, .0101)&(.8494, .0101)\\
&(.0034, .0073)&(.0110, .0057)&(.0231, .0046)&(.0244, .0039)\\\hline
\multirow{2}{*}{$\lambda=  0.1$}&(.1504, .1003)&(.3501, .1002)&(.6500, .1003)&(.8504, .1002)\\
&(.0031, .0170)&(.0070, .0128)&(.0184, .0111)&(.0349, .0129)\\\hline
\multirow{2}{*}{$\lambda=    1$}&(.1504, 1.016)&(.3502, 1.004)&(.6502, 1.001)&(.8505, 1.000)\\
&(.0046, .1648)&(.0090, .0715)&(.0161, .0417)&(.0220, .0318)\\\hline
\end{tabular}
}
\end{minipage}
\caption{ \textbf{\boldmath  Bias correction and $N_{\psi}$.} Each of the entries in the above tables corresponds to the Monte-Carlo averages (top row of each cell) and standard deviations (bottom row of each cell), based on 5000 independent realizations per $(H,\lambda)$ pair with $\sigma^2 = 1$ and $n=2^{12}$. The best estimation performance is given with $N_\psi$ and bias correction. The table also illustrates the fact that smaller values of $\lambda$ require larger sample sizes. In particular, at the sample size $n=2^{12}$, the instance $\lambda = 0.001$ is not yet estimable, though it does become estimable at larger sample sizes (see Boniece et al.\ \protect\cite{boniece:sabzikar:didier:2018}). See also Table \ref{table:pwr} on the related issue of test power.}\label{MC_212_tables}
\end{table}

\noindent \textbf{Bias and standard deviation.} Figure \ref{fig:skew_kurt_bias_plots} shows that, for all estimator entries $\widehat{{\boldsymbol \theta}}_n = (\widehat{H},\widehat{\lambda},\widehat{\sigma^2})$, the bias becomes negligible as the sample size grows. This illustrates the estimator's consistency. The plots further show that standard deviations for all estimator entries decrease as $n^{-1/2}$ (the latter trend being plotted as superimposed red dashed lines), which illustrates the theoretical $\sqrt{n}$ convergence rate to normality.\\

\noindent \textbf{Asymptotic normality.} Figure \ref{fig:skew_kurt_bias_plots} displays the skewness and (excess) kurtosis of the finite sample distribution
of the estimator vector $\widehat{{\boldsymbol \theta}}_n = (\widehat{H}, \widehat{\lambda}, \widehat{\sigma^2})$. Both measures decrease as the sample size increases. Moreover, the plots provide a measure of the sample sizes needed for an accurate Gaussian approximation to the distribution of each entry of $\widehat{{\boldsymbol \theta}}_n$. In particular, it can be seen that normality is reached faster (i.e., at smaller sample sizes) for $\widehat{H}$ than for $\widehat{\lambda}$ and $\widehat{\sigma^2}$.\\

\noindent \textbf{Computational cost (comparison with maximum likelihood estimation).} To gauge the computational cost of the wavelet estimator \eqref{e:def_estimator}, we implemented maximum likelihood estimation (of tfGn) in \texttt{Matlab} using the minimization routine \texttt{fmincon}. Unsurprisingly, maximum likelihood is extremely computationally inefficient, clocking in at roughly 15 minutes 23 seconds per estimate at the moderate sample size $2^{12}$. By contrast, the wavelet method takes 3.86 seconds on average per estimate. Implementing maximum likelihood-based estimation beyond this sample size quickly becomes infeasible, whereas the proposed wavelet method can be used on large sample sizes at moderate computational cost. For example, at $n=2^{20}$ computing the wavelet estimator takes on average $10.34$ seconds per estimate.

\begin{remark}
For a comparative simulation study of the proposed wavelet and the Whittle estimators both in terms of statistical and computational performance, see Boniece et al.\ \cite{boniece:sabzikar:didier:2018}. 
\end{remark}

\begin{figure}[!htb]
    \centering
    \begin{minipage}{.24\linewidth}
        \centering
        \includegraphics[width=\linewidth]{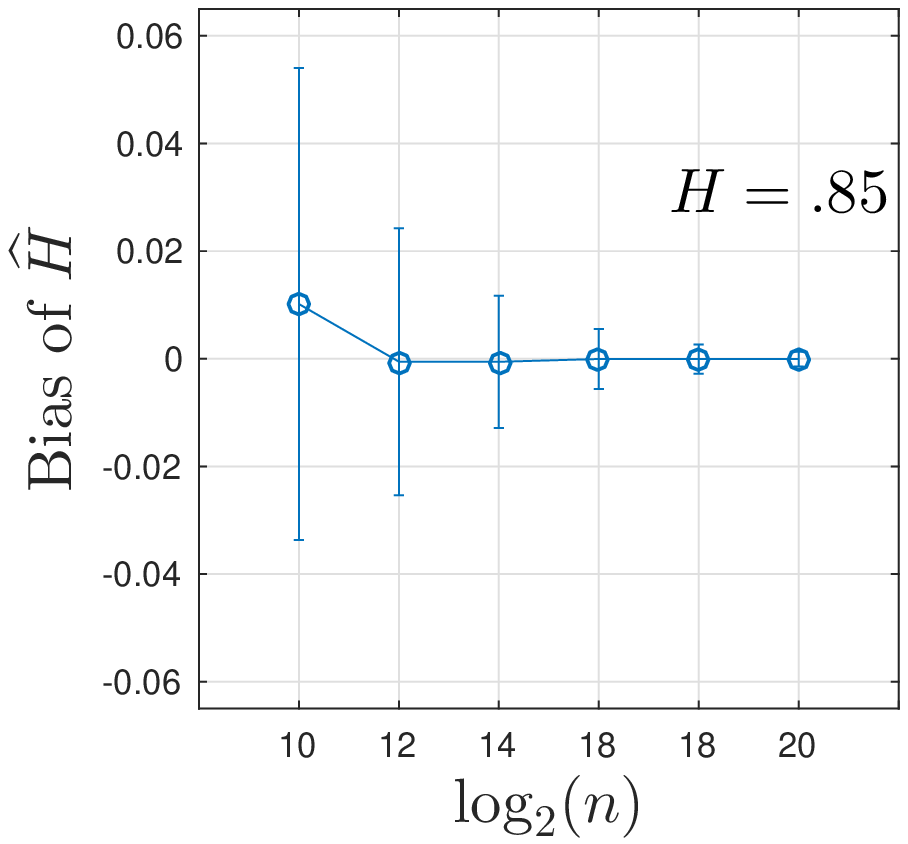}
    \end{minipage}
    \begin{minipage}{.24\linewidth}
        \centering
        \includegraphics[width=\linewidth]{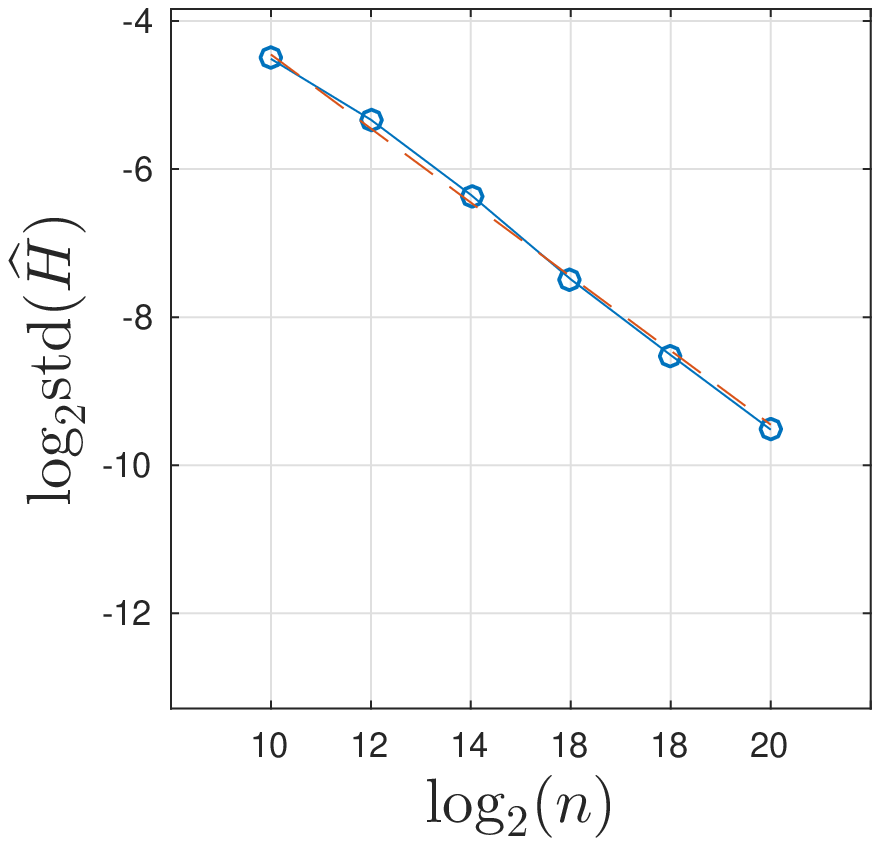}
    \end{minipage}
    \begin{minipage}{.24\linewidth}
        \centering
        \includegraphics[width=\linewidth]{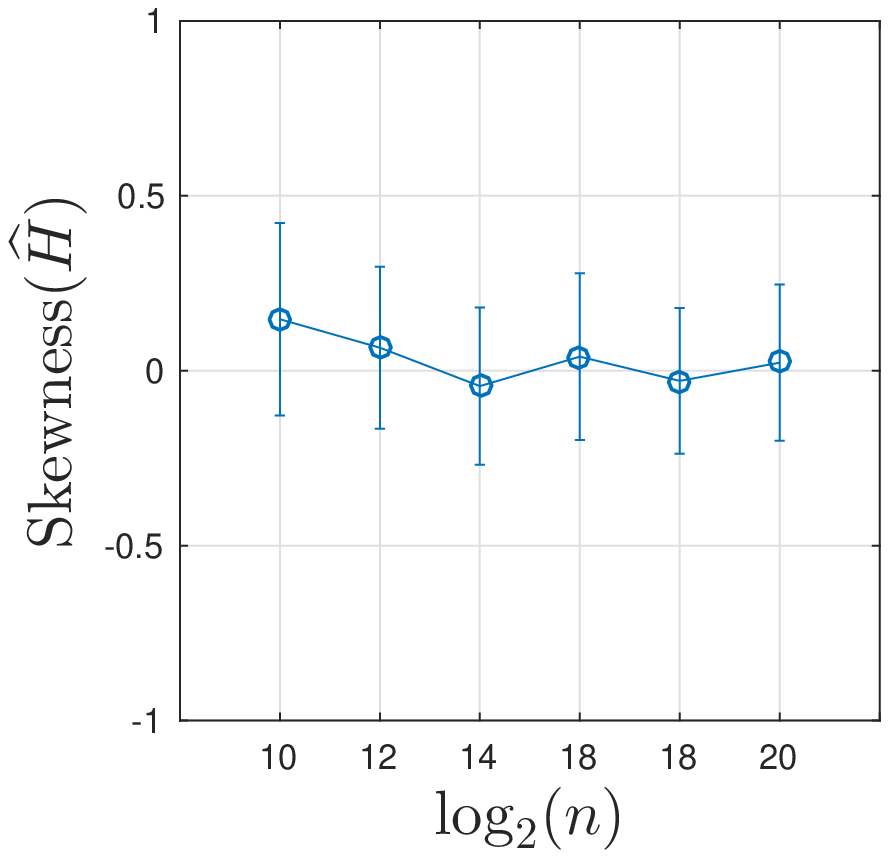}
    \end{minipage}
    \begin{minipage}{.24\linewidth}
        \centering
        \includegraphics[width=\linewidth]{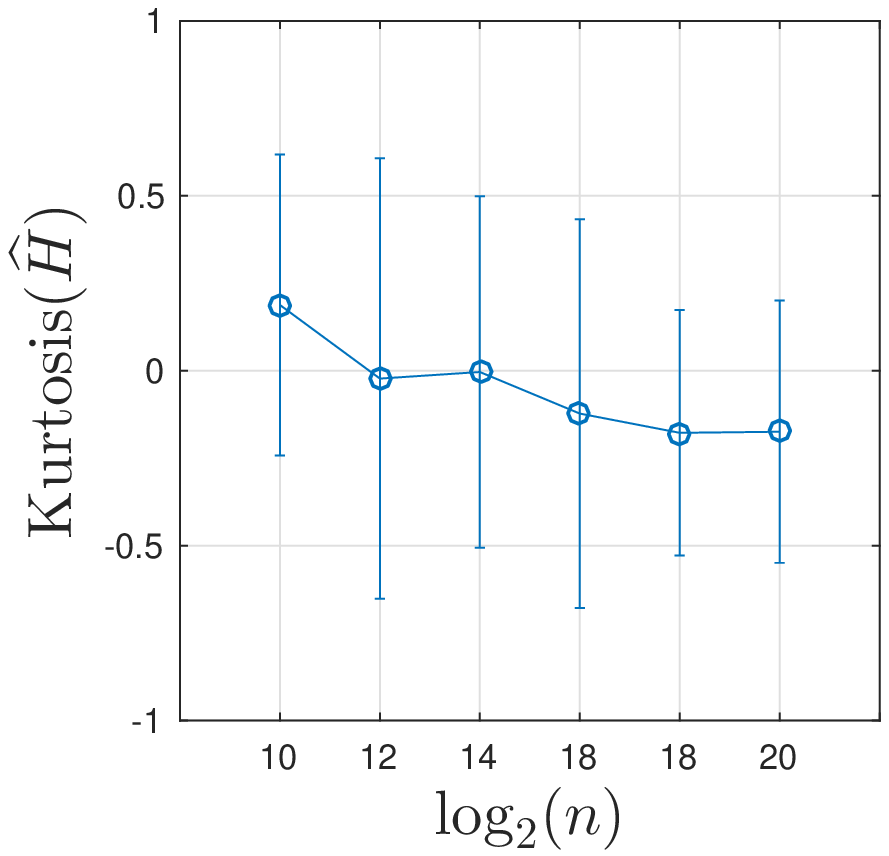}
    \end{minipage}
    \begin{minipage}{.24\linewidth}
        \centering
        \includegraphics[width=\linewidth]{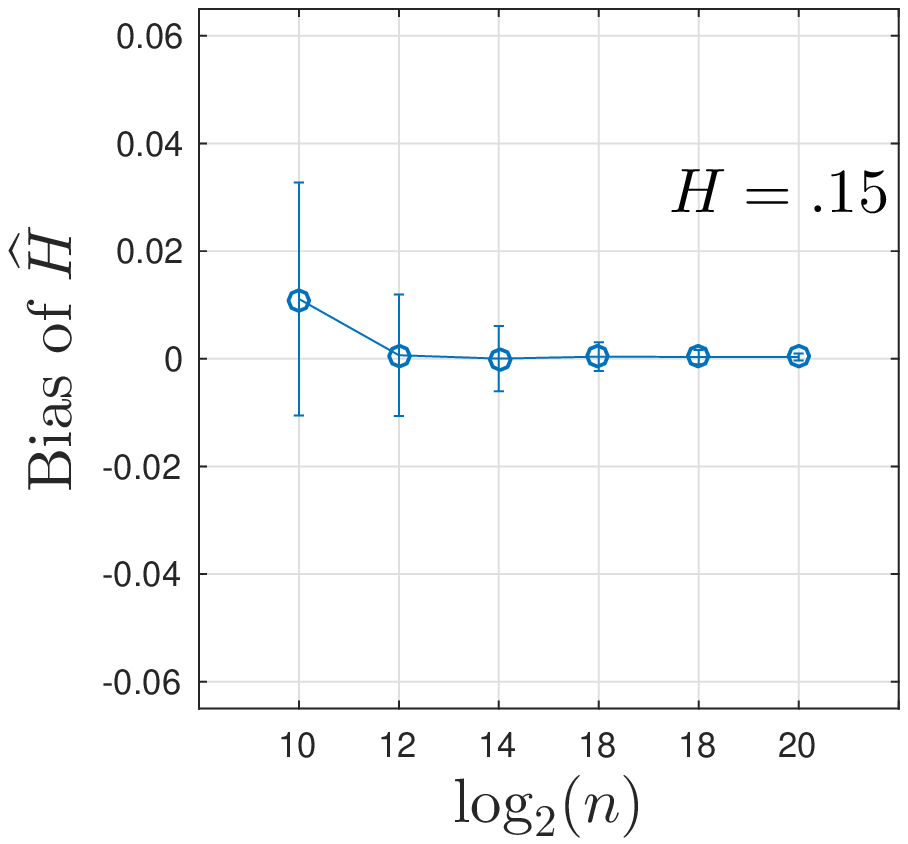}
    \end{minipage}
    \begin{minipage}{.24\linewidth}
        \centering
        \includegraphics[width=\linewidth]{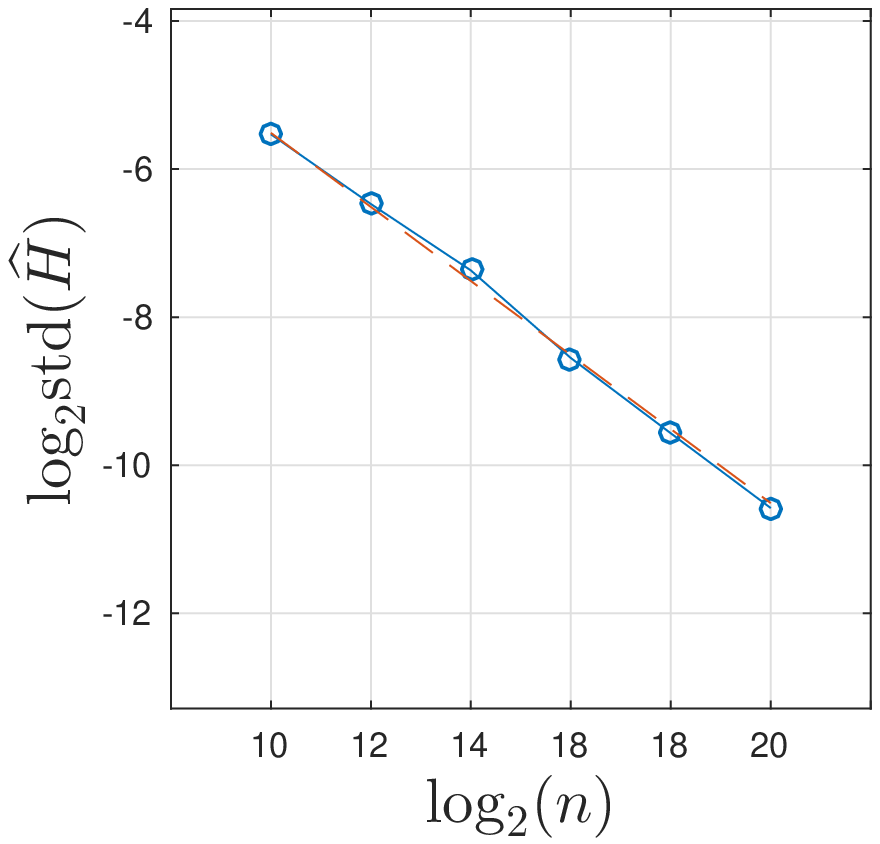}
    \end{minipage}
    \begin{minipage}{.24\linewidth}
        \centering
        \includegraphics[width=\linewidth]{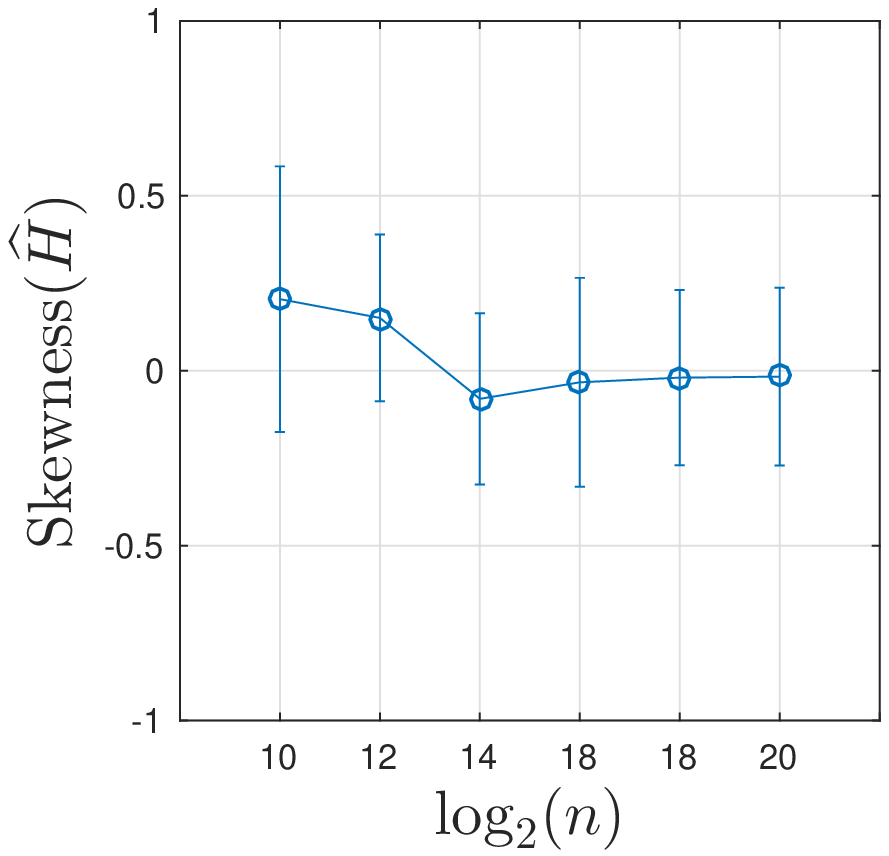}
    \end{minipage}
    \begin{minipage}{.24\linewidth}
        \centering
        \includegraphics[width=\linewidth]{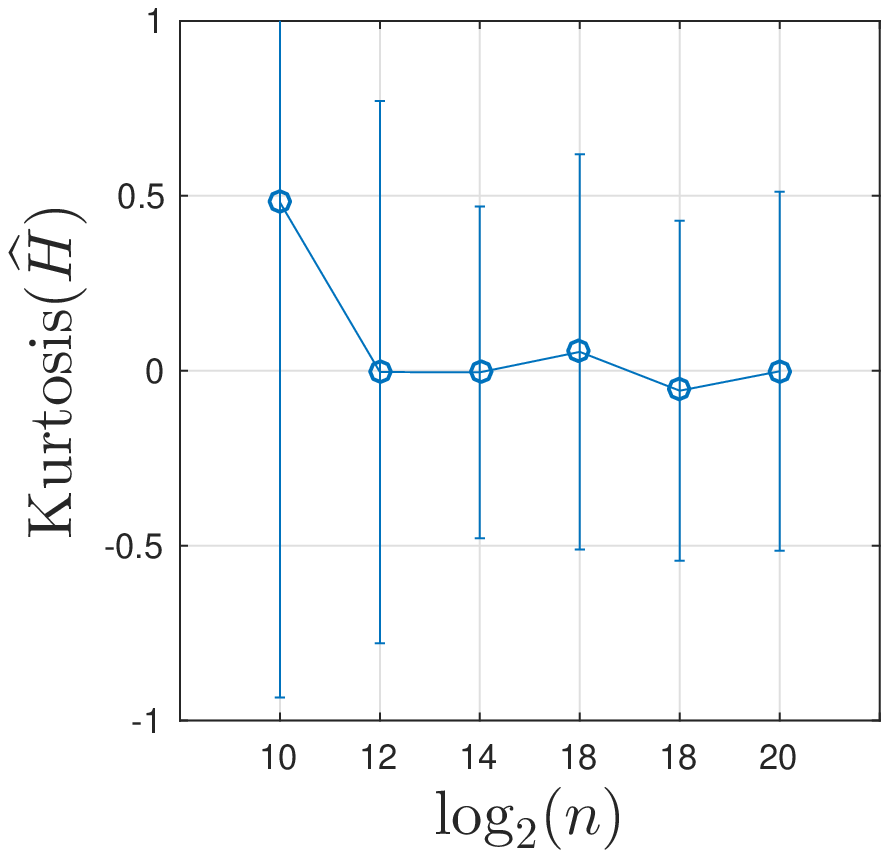}
    \end{minipage}
    \begin{minipage}{.24\linewidth}
        \centering
        \includegraphics[width=\linewidth]{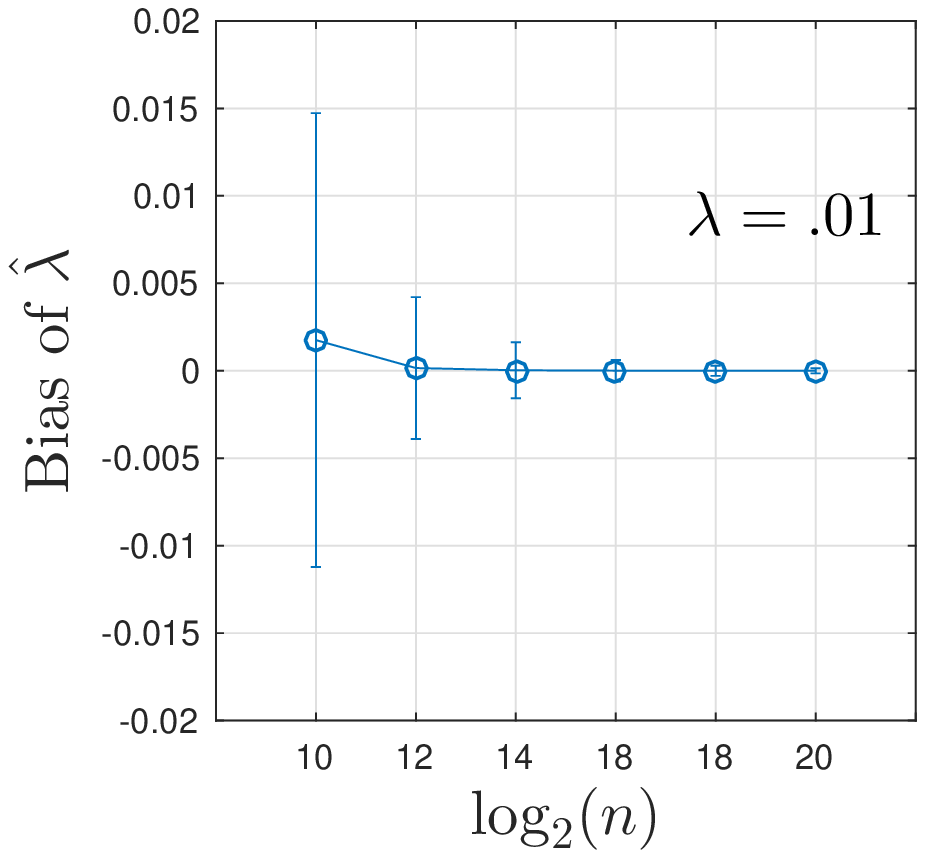}
    \end{minipage}
    \begin{minipage}{.24\linewidth}
        \centering
        \includegraphics[width=\linewidth]{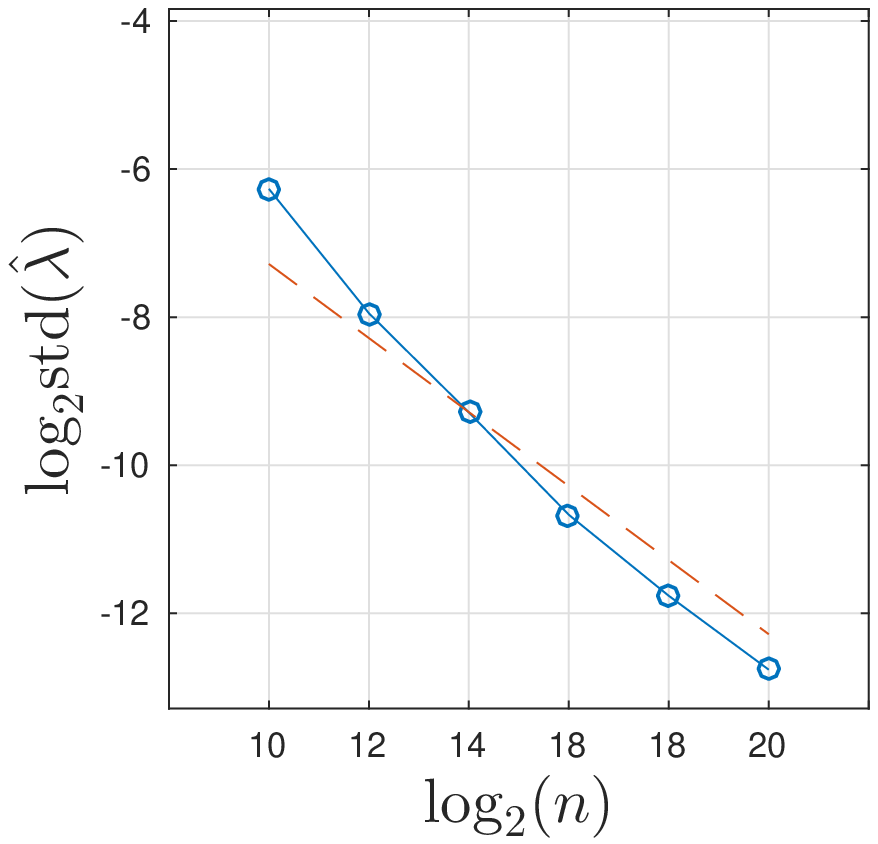}
    \end{minipage}
    \begin{minipage}{.24\linewidth}
        \centering
        \includegraphics[width=\linewidth]{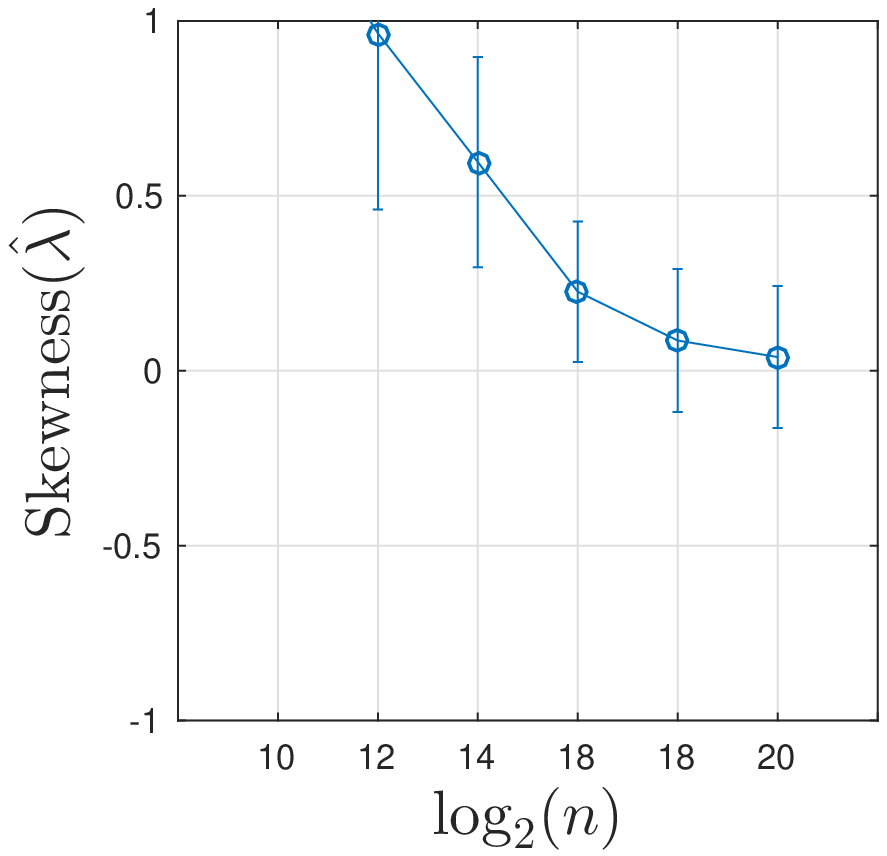}
    \end{minipage}
    \begin{minipage}{.24\linewidth}
        \centering
        \includegraphics[width=\linewidth]{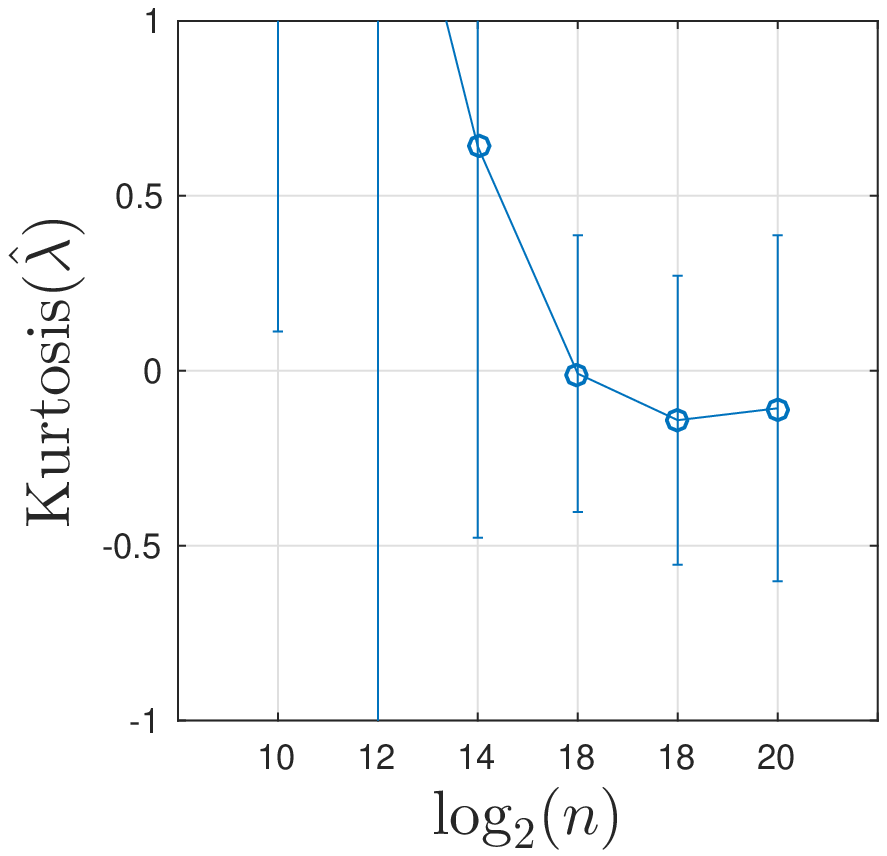}
    \end{minipage}

       \begin{minipage}{.24\linewidth}
        \centering
        \includegraphics[width=\linewidth]{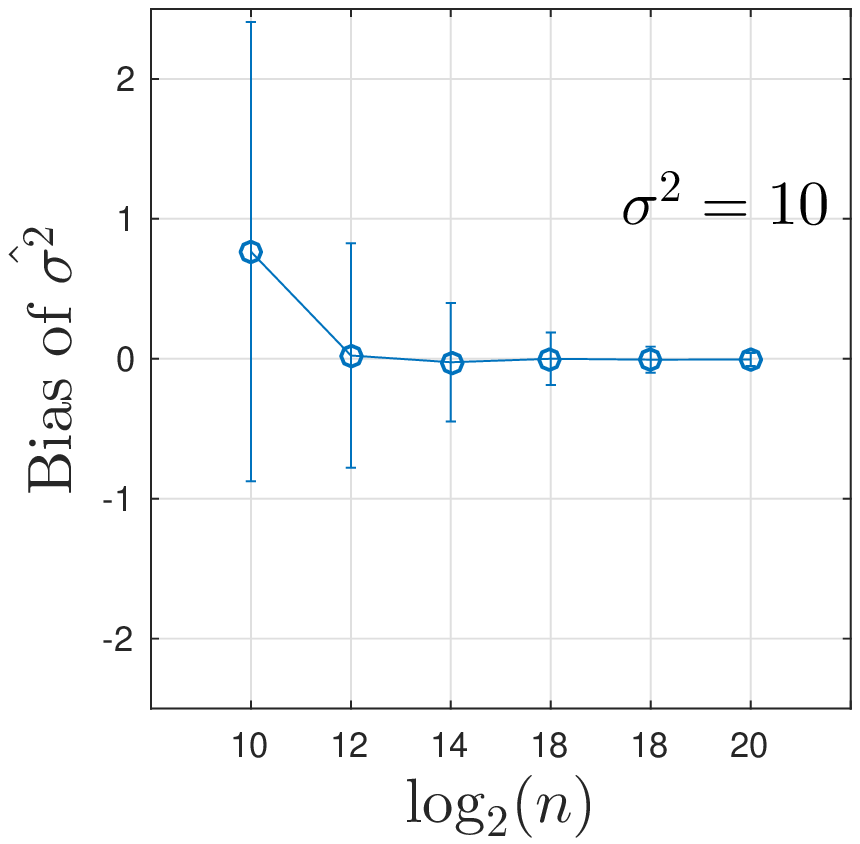}
    \end{minipage}
    \begin{minipage}{.24\linewidth}
        \centering
        \includegraphics[width=\linewidth]{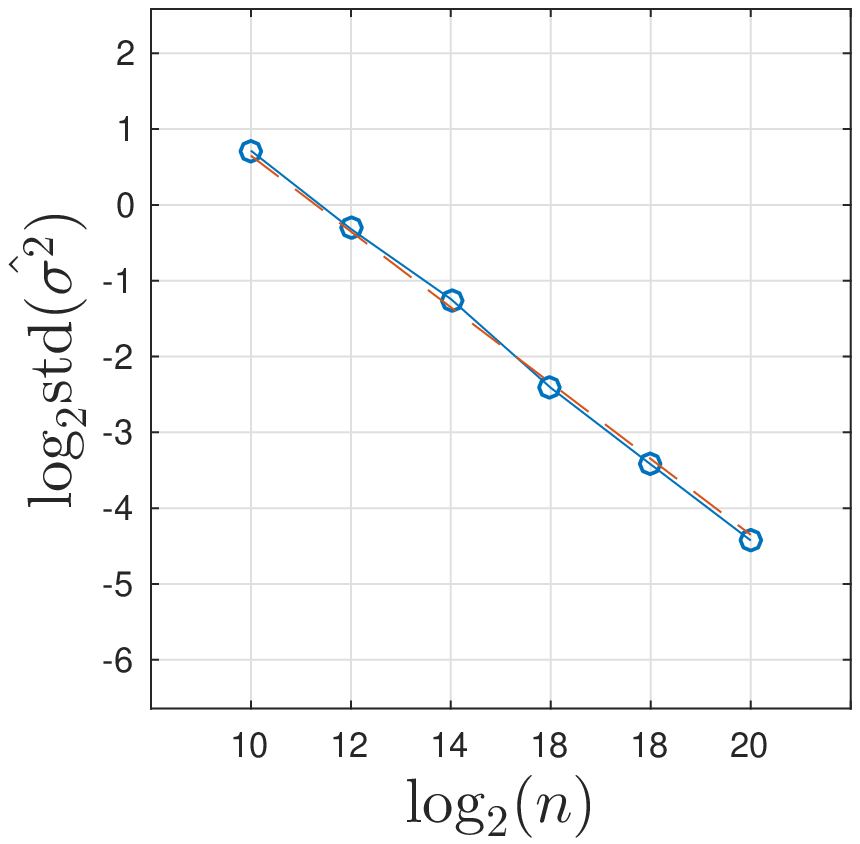}
    \end{minipage}
    \begin{minipage}{.24\linewidth}
        \centering
        \includegraphics[width=\linewidth]{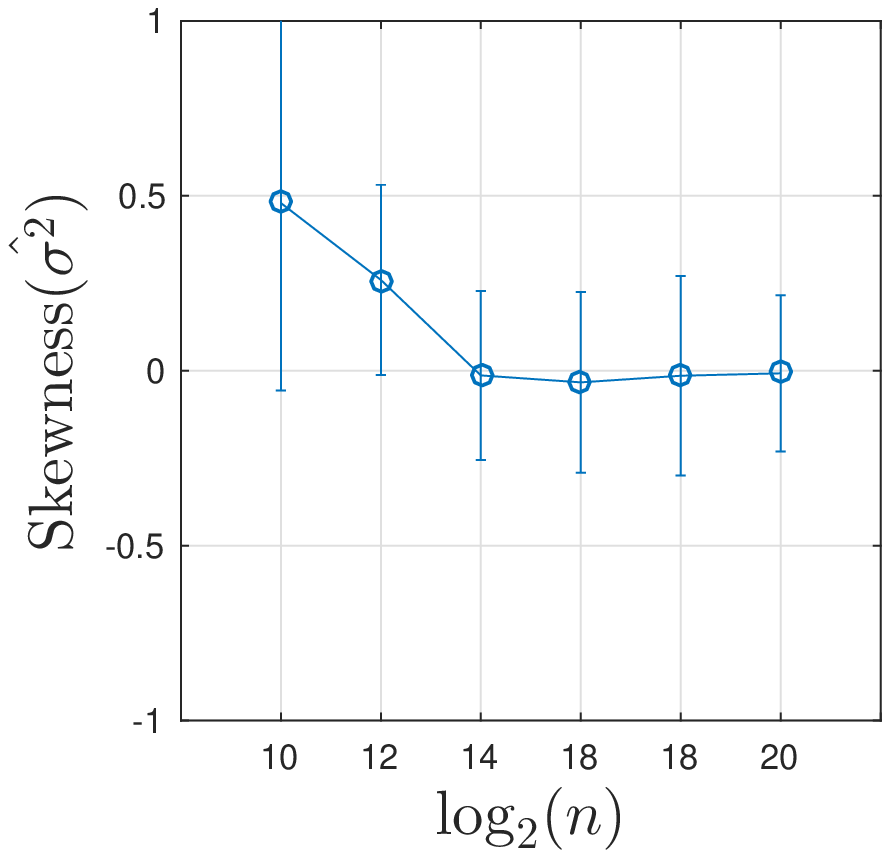}
    \end{minipage}
    \begin{minipage}{.24\linewidth}
        \centering
        \includegraphics[width=\linewidth]{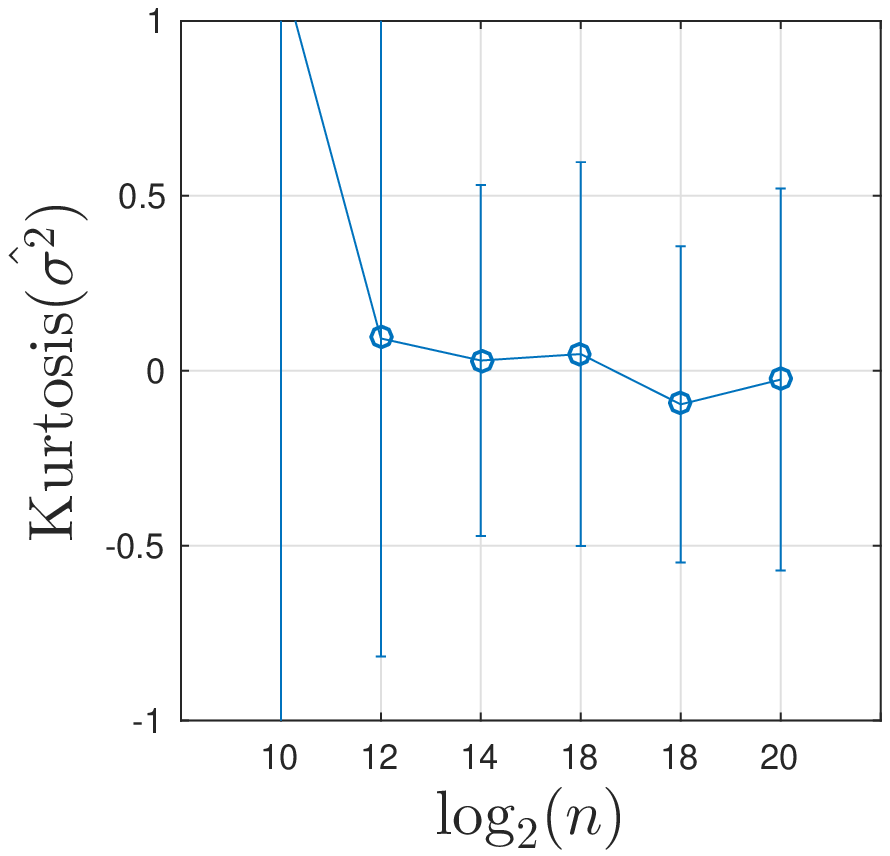}
    \end{minipage}
    \caption{\label{fig:skew_kurt_bias_plots} \textbf{\boldmath  Estimation performance and asymptotic normality of $\widehat{H},\widehat{\lambda},\widehat{\sigma^2}$.} From left to right, each column represents the Monte Carlo bias, standard deviation, skewness, and kurtosis, of the estimators $\widehat{H}$, $\widehat{\lambda}$ and $\widehat{\sigma^2}$, all with $N_\psi=2$ based on 5000 realizations at the sample sizes $n=2^{10},2^{12},\ldots,2^{20}$. The top two rows demonstrate the performance of $\widehat{H}$ with $(H,\lambda,\sigma^2)=(.85,.01,10)$ and $(H,\lambda,\sigma^2)=(.15,.01,10)$, respectively.  The bottom rows correspond to the performance of $\widehat{\lambda}$ and $\widehat{\sigma^2}$ when $(H,\lambda,\sigma^2)=(.15,.01,10)$.  Each of the red dashed lines in the second column correspond to a slope of $-1/2$, indicating the $\sqrt{n}$ convergence of the estimators.}
    \end{figure}

\begin{table}[h]
\centering
\begin{minipage}{.49\linewidth}
\includegraphics[width=\linewidth]{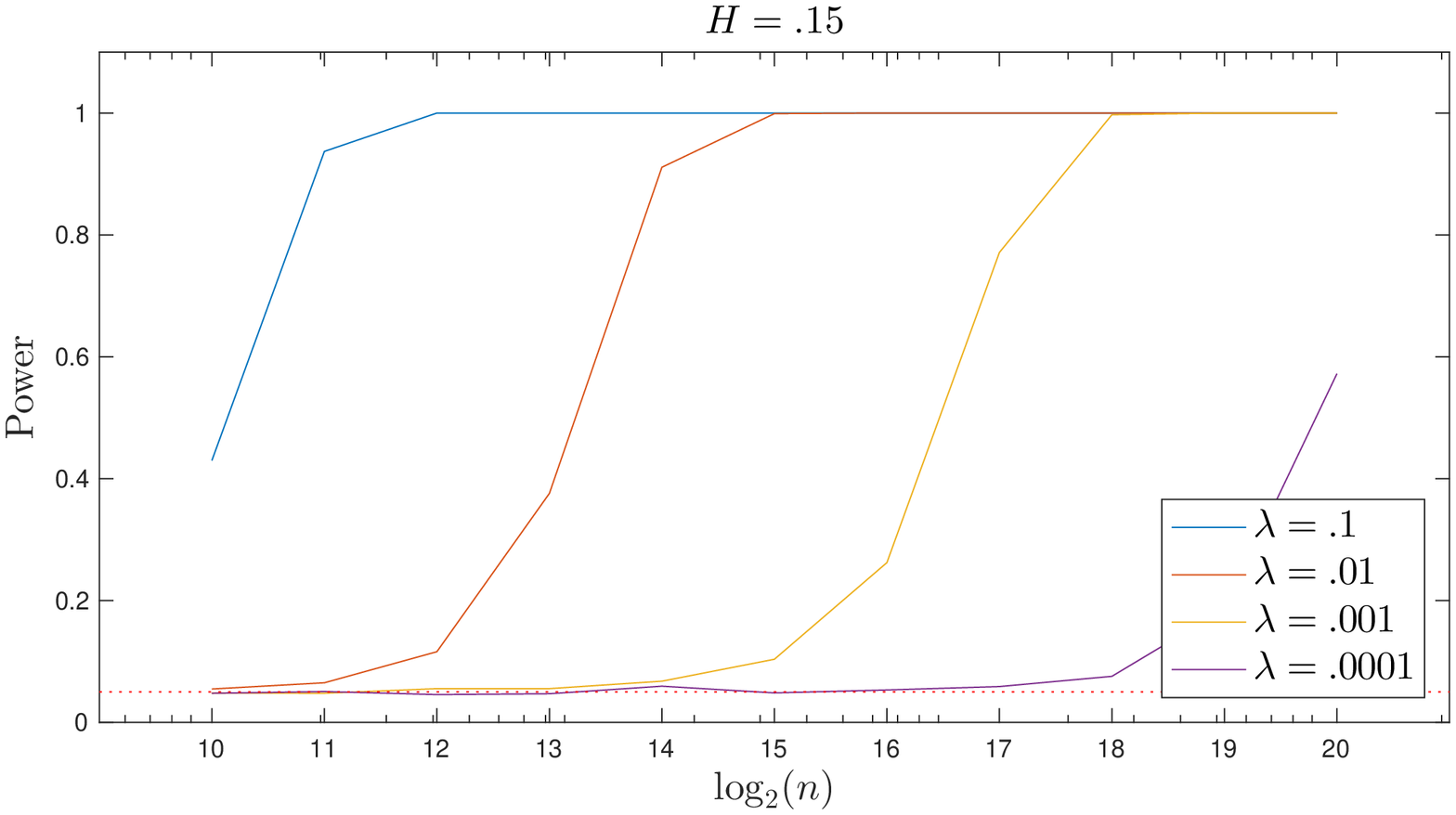}
\resizebox{\linewidth}{!}{
\begin{tabular}{|c||c|c|c|c|c|}
\hline
\multirow{2}{*}{}&\multicolumn{5}{|c|}{under $H_1$, $\epsilon=.05$, $H=.15$} \\\cline{2-6}
&$\lambda=.0001$&$\lambda=.001$&$\lambda=.01$&$\lambda=.1$&$\lambda=1$\\\hhline{|=||=|=|=|=|=|}
\textbf{$n=2^{10}$}&.0478&.0480&.0548&.4298&1.0000\\\hline
\textbf{$n=2^{11}$}&.0506&.0478&.0650&.9370&1.0000\\\hline
\textbf{$n=2^{12}$}&.0456&.0554&.1160&1.0000&1.0000\\\hline
\textbf{$n=2^{13}$}&.0470&.0554&.3758&1.0000&1.0000\\\hline
\textbf{$n=2^{14}$}&.0594&.0676&.9110&1.0000&1.0000\\\hline
\textbf{$n=2^{15}$}&.0484&.1036&.9990&1.0000&1.0000\\\hline
\textbf{$n=2^{16}$}&.0532&.2622&1.0000&1.0000&1.0000\\\hline
\textbf{$n=2^{17}$}&.0588&.7712&1.0000&1.0000&1.0000\\\hline
\textbf{$n=2^{18}$}&.0756&.9972&1.0000&1.0000&1.0000\\\hline
\textbf{$n=2^{19}$}&.1862&1.0000&1.0000&1.0000&1.0000\\\hline
\textbf{$n=2^{20}$}&.5722&1.0000&1.0000&1.0000&1.0000\\\hline
\end{tabular}}\end{minipage}
\begin{minipage}{.49\linewidth}
\includegraphics[width=\linewidth]{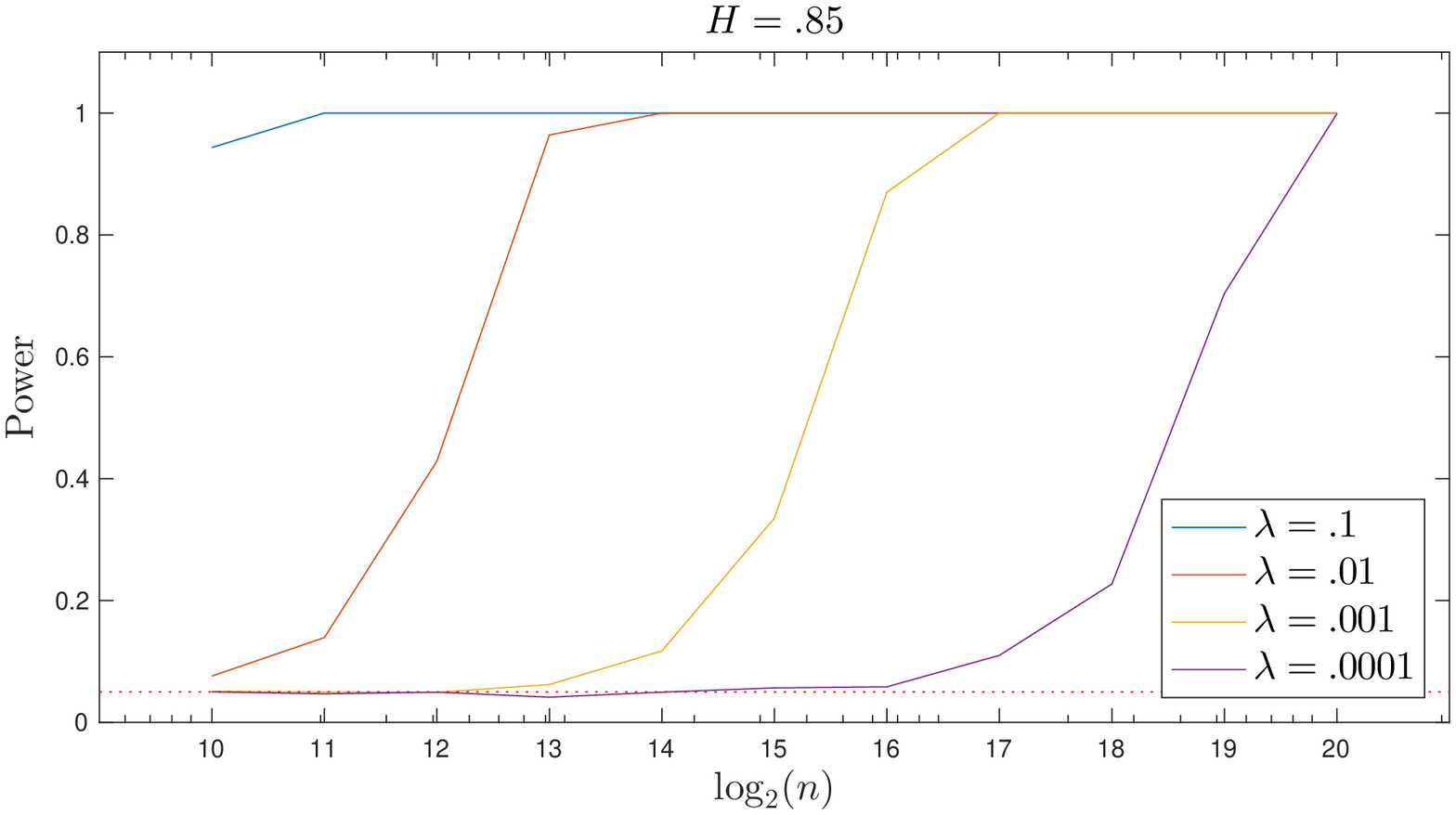}
\resizebox{\linewidth}{!}{
\begin{tabular}{|c||c|c|c|c|c|}
\hline
\multirow{2}{*}{}&\multicolumn{5}{|c|}{under $H_1$, $\epsilon=.05$, $H=.85$} \\\cline{2-6}
&$\lambda=.0001$&$\lambda=.001$&$\lambda=.01$&$\lambda=.1$&$\lambda=1$\\\hhline{|=||=|=|=|=|=|}
\textbf{$n=2^{10}$}&.0502&.0508&.0760&.9434&1.0000\\\hline
\textbf{$n=2^{11}$}&.0468&.0498&.1392&1.0000&1.0000\\\hline
\textbf{$n=2^{12}$}&.0496&.0492&.4288&1.0000&1.0000\\\hline
\textbf{$n=2^{13}$}&.0414&.0622&.9640&1.0000&1.0000\\\hline
\textbf{$n=2^{14}$}&.0496&.1172&1.0000&1.0000&1.0000\\\hline
\textbf{$n=2^{15}$}&.0566&.3352&1.0000&1.0000&1.0000\\\hline
\textbf{$n=2^{16}$}&.0584&.8702&1.0000&1.0000&1.0000\\\hline
\textbf{$n=2^{17}$}&.1100&1.0000&1.0000&1.0000&1.0000\\\hline
\textbf{$n=2^{18}$}&.2272&1.0000&1.0000&1.0000&1.0000\\\hline
\textbf{$n=2^{19}$}&.7040&1.0000&1.0000&1.0000&1.0000\\\hline
\textbf{$n=2^{20}$}&.9990&1.0000&1.0000&1.0000&1.0000\\\hline
\end{tabular}}
\end{minipage}

\caption{\textbf{\boldmath  Test power.} Each entry in the above tables corresponds to the Monte Carlo test power at the given values of $H,\lambda$ based on 5000 runs at the $\epsilon=.05$ level of significance. The red dashed line corresponds to the $\epsilon=.05$ significance level. }
\label{table:pwr}
\end{table}

\noindent \textbf{Hypothesis testing.} To study the finite-sample performance of the test in Theorem \ref{t:test}, 5000 independent realization of tfBm paths were generated over the parameters $(H,\lambda) \in \{0.15,0.85\} \times \{0.0001,0.001,0.01,0.1,1\}$ at the sample sizes $n=2^{10},2^{11},\ldots,2^{20}$. The chosen octaves $j_1,j_2,j_3,j_4$ for the test statistic \eqref{e:test_stat} were $j_1=1$, $j_2=2$, $j_3=\log_2(n)-7$, $j_4=\log_2(n)-3$, and in all cases we set $N_\psi=2$. Simulation studies were conducted to estimate the quantiles used for the test in Table \ref{table:pwr}.  Numerical results indicate that the asymptotic standard deviation \eqref{e:Ralpha} is not very sensitive to $H$ and lies between 0.080 and 0.101 for multiple choices of $H$ (a similar phenomenon is commonly found in the wavelet analysis of fractional stochastic processes; see, for instance, Wendt et al.\ \cite{wendt:didier:combrexelle:abry:2017}, Section 4). So, in the simulation studies of the test we picked values for $\widehat{\tau}_0(H)$ in the indicated range.

As can be seen in Table \ref{table:pwr}, for any fixed value of $\lambda$, reliable testing of fBm vs tfBm alternatives requires a certain sample size. For the moderate values $\lambda = 0.1, 1$, the test is very powerful at the moderate sample size of $2^{11}$ regardless of $H$. However, for the small values $\lambda = 0.0001,0.001, 0.01$, significantly larger sample sizes are required for attaining high power, especially for the small value $H=.15$. This confirms and quantifies what Figure \ref{fig:semilrd}, left plot, visually suggests.

\subsection{River flow data modeling: fBm or tfBm?}\label{s:application}

As an application, we revisit velocity data associated with turbulent supercritical flow in the Red Cedar river, a fourth-order stream in Michigan, USA (Coordinates: 42.72908N, 84.48228W). The data was kindly provided by Prof.\ Mantha S.\ Phanikumar, from Michigan State University, and is also modeled in Meerschaert et al.\ \cite{meerschaert:sabzikar:phanikumar:zeleke:2014} in the Fourier domain. The data set contains flow features over a range of spatial and temporal scales associated with turbulent flows in the natural environment and is believed to be appropriate for the analysis of energy spectra. The measurements ($n=46080$ points) were made at a sampling rate of 50 Hz using a 16 MHz Sontek Micro-ADV (Acoustic Doppler Velocimeter) on May 26, 2014.

Inspection of the sample wavelet spectrum (Figure \ref{fig:prob1_6_1}, right plot) shows that tfBm provides a close fit for turbulent flow data, and that a conspicuous deviation from fBm appears over large octaves. An application of \eqref{e:def_estimator} yields the wavelet-based estimates $\widehat{H}=0.329$, $\widehat{\lambda}=0.121$, $\widehat{\sigma^2}=24.825$. In particular, $\widehat{H}$ is strikingly close to the value $1/3$ predicted by the Kolmogorov scaling model for the inertial range. Figure \ref{fig:ACFs} further compares sample autocorrelation function plots of the flow data and of simulated tfBm based on the fitted values. As expected from a tfBm model, both look compatible with stationarity and the discrepancy between them is tiny. The visual impression that tfBm is a better model for the flow data than fBm is confirmed by the test \eqref{e:Ralpha}. To conduct the test, we made the natural choice of low octaves $j_1=1$ and $j_2=2$, as well as $j_3=8$ and $j_4=10$ for the large octaves. This yielded the value $T_n = 0.8824$ for the test statistic \eqref{e:test_stat}, with an associated $p$-value of the order $10^{-16}$. Hence, and unsurprisingly, there is strong evidence against the null hypothesis of fBm. Other reasonable choices of octaves $j_3$ and $j_4$ lead to the same conclusion.

\begin{figure}[!htb]
    \centering
    \begin{minipage}{\linewidth}
        \centering
        \includegraphics[width=.8\linewidth]{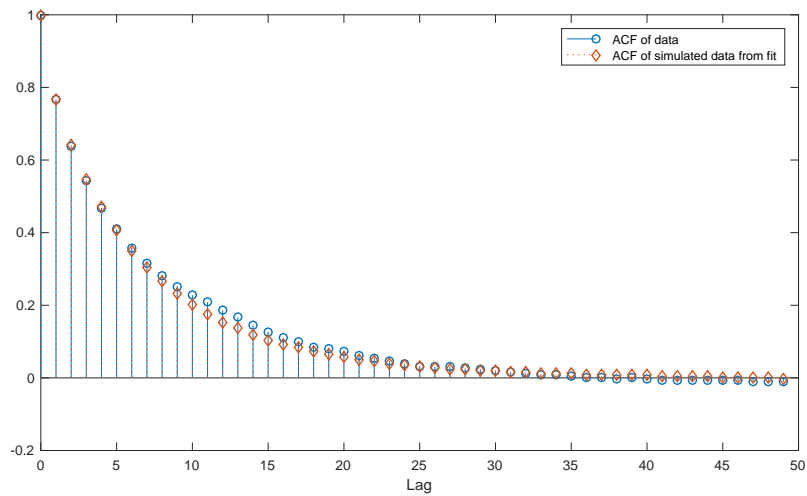}
    \end{minipage}%
    \caption{\label{fig:ACFs} \textbf{Sample autocorrelation function plots} for geophysical flow data and for one simulated tfBm path (of length $2^{20}$) based on the vector parameter value given by the fitted vector of parameters $\widehat{{\boldsymbol \theta}}_n = (\widehat{H}, \widehat{\lambda}, \widehat{\sigma^2})$.}
    \end{figure}

\section{Conclusion}

Tempered fractional Brownian motion (tfBm) is a canonical model that displays the so-named Davenport spectrum. The latter is a modification of the classical Kolmogorov spectrum for the inertial range of turbulence that includes non-scaling behavior at low frequencies. The autocorrelation of the increments of tfBm displays semi-long range dependence, a phenomenon that has been observed in a wide range of applications. In this paper, we use wavelets to construct the first estimator for tfBm and a simple and computationally efficient test for fBm vs tfBm alternatives. We make the realistic assumption that only discrete time measurements are available. The properties of the wavelet estimator and test are mathematically and computationally established. An application of the methodology to geophysical flow data from the Red Cedar river in Michigan, USA, showed that tfBm provides a much closer fit than fBm.

This work also points to a number of open research problems, such as the testing of tfBm vs stationary alternatives, physical modeling for $H > 1$, and the construction of statistical methodology for several other Gaussian or non-Gaussian tempered fractional models.

\appendix

\section{Proofs and auxiliary results}
Throughout the paper, we use the convention
$$
\widehat{f}(x) = \int_\R e^{-ixt}f(t)dt
$$
to denote the Fourier transform for any $f\in L^2(\R)$. In all mathematical statements, we assume the following conditions hold on the underlying wavelet MRA.

\medskip

\noindent {\sc Assumption $(W1)$}: $\psi \in L^2(\bbR)$ is a wavelet function, namely, it satisfies the relations
\begin{equation}\label{e:N_psi}
\int_{\bbR} \psi^2(t)dt = 1 , \quad \int_{\bbR} t^{p}\psi(t)dt = 0, \quad p = 0,1,\hdots, N_{\psi}-1, \quad \int_{\bbR} t^{N_{\psi}}\psi(t)dt \neq 0,
\end{equation}
for some integer (number of vanishing moments) $N_{\psi} \geq 1$.

\medskip

\noindent {\sc Assumption ($W2$)}: the scaling and wavelet functions
\begin{equation}\label{e:supp_psi=compact}
\textnormal{$\phi\in L^1(\bbR)$ and $\psi\in L^1(\bbR)$ are compactly supported }
\end{equation}
and $\widehat{\phi}(0)=1$.

\medskip

\noindent {\sc Assumption $(W3)$}: there is $\alpha > 1$ such that
\begin{equation}\label{e:psihat_is_slower_than_a_power_function}
\sup_{x \in \bbR} |\widehat{\psi}(x)| (1 + |x|)^{\alpha} < \infty.
\end{equation}

\medskip

\noindent {\sc Assumption ($W4$)}: the function
\begin{equation}\label{e:sum_k^m_phi(.-k)}
\sum_{k\in \mathbb{Z}}k^m\phi(\cdot-k)
\end{equation}
is a polynomial of degree $m$ for all $m=0,\hdots,N_{\psi}-1$.

\medskip



Conditions \eqref{e:N_psi} and \eqref{e:supp_psi=compact} imply that $\widehat{\psi}(x)$ exists, is everywhere differentiable and its first $N_{\psi}-1$ derivatives are zero at $x = 0$. Condition \eqref{e:psihat_is_slower_than_a_power_function}, in turn, implies that $\psi$ is continuous (see Mallat \cite{mallat:1999}, Theorem 6.1) and, hence, bounded.

The Daubechies scaling and wavelet functions generally satisfy ($W1-W4$) (see Moulines et al.\ \cite{moulines:roueff:taqqu:2008}, p.\ 1927, or Mallat \cite{mallat:1999}, p.\ 253). Usually, the parameter $\alpha$ increases to infinity as $N_{\psi}$ goes to infinity (see Moulines et al.\ \cite{moulines:roueff:taqqu:2008}, p.\ 1927, or Cohen \cite{cohen:2003}, Theorem 2.10.1). In some results, we will make use of the additional condition
\begin{equation}\label{e:Npsi>alpha}
{N_{\psi} \geq 2},
\end{equation}
which is assumed in Theorem \ref{t:test}.

\subsection{Estimation}

\begin{proposition}\label{p:wavelet_spectrum}
Expressions \eqref{e:discspec} and \eqref{e:disc_slrd2} hold.
\end{proposition}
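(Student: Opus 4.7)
The plan is to prove \eqref{e:discspec} by combining the harmonizable representation of tfBm with the Itô isometry and a periodization, and to prove \eqref{e:disc_slrd2} by showing that $|\mathcal{H}_j|^2/(2\pi)$ acts as an approximate identity at $0$ in the limit $j\to\infty$.

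\textbf{Step 1 (proof of \eqref{e:discspec}).} Starting from \eqref{e:tfbm_harmonizable}, I would substitute into \eqref{e:disc2}. Since only finitely many $h_{j,\ell}$ are nonzero (by the compact support of $\phi,\psi$ from (W2)), the deterministic sum can be brought inside the stochastic integral. A reindexing $m=2^jk-\ell$ collapses $\sum_\ell h_{j,2^jk-\ell}e^{-ix\ell}$ into $e^{-i 2^jkx}\mathcal{H}_j(x)$, while the $-1$ contribution from the harmonizable kernel reduces to $\mathcal{H}_j(0)$. Assumption (W4) with $m=0$, combined with (W2) (integrating to pin the constant), forces the partition-of-unity identity $\sum_\ell \phi(\cdot+\ell)\equiv 1$; with (W1) this gives $\mathcal{H}_j(0)=2^{j/2}\int\psi=0$, so the constant piece drops. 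Then
\[
d(2^j,k) = \frac{\Gamma(H+\tfrac{1}{2})}{\sqrt{2\pi}}\int_\bbR \frac{e^{-i 2^j k x}\mathcal{H}_j(x)}{(\lambda+ix)^{H+1/2}}\widetilde{B}(dx),
\]
and the Itô isometry yields $\E d^2(2^j,k)=\frac{\sigma^2\Gamma^2(H+1/2)}{2\pi}\int_\bbR (\lambda^2+x^2)^{-(H+1/2)}|\mathcal{H}_j(x)|^2 dx$, independent of $k$. Partitioning $\bbR=\bigcup_\ell [(2\ell-1)\pi,(2\ell+1)\pi]$, translating $y=x-2\pi\ell$, using the $2\pi$-periodicity of $\mathcal{H}_j$, and Tonelli's theorem recover \eqref{e:discspec}.

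\textbf{Step 2 (reduction of \eqref{e:disc_slrd2}).} Setting $g(y):=\sum_\ell (\lambda^2+(y+2\pi\ell)^2)^{-(H+1/2)}$ (continuous and $2\pi$-periodic for $\lambda>0$, with absolutely convergent defining series), the preceding identity reads $\E d^2(2^j,0)=\frac{\sigma^2\Gamma^2(H+1/2)}{2\pi}\int_{-\pi}^\pi |\mathcal{H}_j(y)|^2 g(y)dy$. Since $g(0)=\sum_\ell (\lambda^2+(2\pi\ell)^2)^{-(H+1/2)}$, the conclusion \eqref{e:disc_slrd2} reduces to showing $\int_{-\pi}^\pi |\mathcal{H}_j(y)|^2 g(y)dy \to 2\pi g(0)$.

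\textbf{Step 3 (approximate-identity convergence; the main obstacle).} The key tool is the explicit Poisson-sum representation $\mathcal{H}_j(x)=2^{j/2}\sum_\ell \widehat{\phi}(2\pi\ell-x)\overline{\widehat{\psi}(2^j(2\pi\ell-x))}$, obtained by writing $h_{j,\ell}=\int\phi(s)\psi_j(s-\ell)ds$ (with $\psi_j(s)=2^{-j/2}\psi(2^{-j}s)$) via Parseval and then summing in $\ell$. Decompose $\mathcal{H}_j=M_j+R_j$, where $M_j(y):=2^{j/2}\widehat{\phi}(-y)\overline{\widehat{\psi}(-2^jy)}$ is the $\ell=0$ term. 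For $y\in[-\pi,\pi]$ and $\ell\neq 0$, $|2\pi\ell-y|\geq\pi(2|\ell|-1)$, so (W3) gives $|\widehat{\psi}(2^j(2\pi\ell-y))|\leq C(1+2^j\pi(2|\ell|-1))^{-\alpha}$; combined with boundedness of $\widehat{\phi}$ (from $\phi\in L^1$) and $\alpha>1$, the tail is summable and yields $|R_j(y)|\leq C 2^{j/2-j\alpha}$ uniformly on $[-\pi,\pi]$. After the change of variables $u=2^jy$,
\[
\int_{-\pi}^\pi |M_j(y)|^2 g(y)dy = \int_{-2^j\pi}^{2^j\pi}|\widehat{\phi}(2^{-j}u)|^2 |\widehat{\psi}(u)|^2 g(2^{-j}u)du,
\]
to which dominated convergence applies with majorant $\|\widehat{\phi}\|_\infty^2 \|g\|_\infty |\widehat{\psi}(u)|^2$ (integrable by Parseval, $\int|\widehat{\psi}|^2=2\pi$); using $\widehat{\phi}(0)=1$ from (W2), the limit is $2\pi g(0)$. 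The pure remainder $\int|R_j|^2 g$ is $O(2^{j-2j\alpha})=o(1)$ since $\alpha>1/2$, and the cross term $2\mathrm{Re}\int\overline{M_j}R_j g$ vanishes in the limit by Cauchy--Schwarz. Unpacking the prefactors gives \eqref{e:disc_slrd2}. The technical core is precisely the control of the $\ell\neq 0$ contributions in the Poisson-sum representation of $\mathcal{H}_j$, for which the polynomial decay of $\widehat{\psi}$ encoded by (W3) with $\alpha>1$ is essential.
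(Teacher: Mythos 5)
Your proposal is correct. For \eqref{e:discspec} it follows essentially the same route as the paper: plug the harmonizable representation \eqref{e:tfbm_harmonizable} into the filter formula \eqref{e:disc2}, kill the constant part of the kernel via ${\mathcal H}_j(0)=0$ (the paper does this through \eqref{eq:vanish}, i.e.\ the same combination of \eqref{e:N_psi} and \eqref{e:sum_k^m_phi(.-k)} you invoke), apply the isometry, and periodize; the paper merely carries this out for the full cross-covariance \eqref{e:discspec_j,j'}, which it reuses later, whereas you only need the diagonal case here. For \eqref{e:disc_slrd2} both arguments are approximate-identity arguments based on the Poisson-sum form of ${\mathcal H}_j$ (the paper's \eqref{e:Hj_expanded_pois_formula}; your version differs only by a sign convention, which is immaterial since only $|{\mathcal H}_j|^2$ enters), but the execution differs: the paper combines the exact mass identity $\frac{1}{2\pi}\int_{-\pi}^{\pi}|{\mathcal H}_j|^2dx=\sum_\ell h_{j,\ell}^2=1$ (Parseval plus the Pythagorean theorem) with the Moulines et al.\ bound on $\bigl||{\mathcal H}_j(x)|^2-2^j|\widehat\phi(x)\widehat\psi(2^jx)|^2\bigr|$ to show the mass of $|{\mathcal H}_j|^2$ concentrates at $0$, and then uses continuity of the periodized spectral density; you instead split ${\mathcal H}_j=M_j+R_j$ into the $\ell=0$ Poisson term and a tail, bound the tail uniformly by $C2^{j(1/2-\alpha)}$ using \eqref{e:psihat_is_slower_than_a_power_function} with $\alpha>1$, and compute $\lim_j\int_{-\pi}^{\pi}|M_j|^2g$ directly by dominated convergence and Plancherel ($\int_{\bbR}|\widehat\psi|^2=2\pi$). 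Your variant is self-contained (no appeal to relation (79) of Moulines et al.\ or to the exact unit-mass identity) at the cost of your own tail estimate, and it uses the same hypotheses, so nothing is lost; both deliver \eqref{e:disc_slrd2} with the correct constant.
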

\begin{proof}
To show \eqref{e:discspec} and \eqref{e:disc_slrd2}, we first establish the general expression
\begin{equation}\label{e:discspec_j,j'}
\bbE d(2^j,k)d(2^{j'},k') = \frac{\sigma^2\Gamma^2\left( H+\frac{1}{2} \right)}{2\pi} \int^{\pi}_{-\pi} e^{ix(2^jk-2^{j'}k')} \sum_{\ell \in \bbZ} \frac{ {\mathcal H}_j(x)\overline{{\mathcal H}_{j'}(x)}}{|\lambda^2 + (x + 2 \pi \ell)^2|^{H+\frac{1}{2}}}dx,
\end{equation}
where $j,j' \in \bbN$ and $k,k' \in \bbZ$. The latter can be shown as Proposition C.1 in Abry et al.\ \cite{abry:didier:li:2018}. For the reader's convenience, we provide the details. Given the initial approximation sequence \eqref{e:a(0,k)}, we can express the wavelet coefficients at octave $j$ as $d(2^j,k)=2^{-j/2}\int_\mathbb{R} \widetilde B_{H,\lambda}(t) \psi(2^{-j}t-k)dt$ (cf.\ \eqref{e:disc2}). Moreover, by making a change of variable $t=u + 2^jk$, $t'=u' + 2^{j'}k'$, followed by $\ell=2^jk-m$, $\ell'=2^{j'}k'-m'$,
$$
\textnormal{Cov}(d(2^j,k),d(2^{j'},k'))
$$
$$
=2^{-\frac{j+j'}{2}}\mathbb{E}\int_\mathbb{R}\int_\mathbb{R} \sum_{\ell\in\Z}\sum_{\ell'\in\Z}\psi(2^{-j}t-k)\psi(2^{-j'}t'-k')\phi(t-\ell)\phi(t'-\ell')B_{H,\lambda}(\ell)B_{H,\lambda}(\ell')dtdt'
$$
\begin{equation}\label{e:Cov(Dtilde,Dtilde)}
=2^{-\frac{j+j'}{2}}\mathbb{E}\int_\mathbb{R}\int_\mathbb{R}\sum_{m\in\Z}\sum_{m'\in\Z} \psi(2^{-j}u)\psi(2^{-j'}u')\phi(u+\ell)\phi(u'+\ell')B_{H,\lambda}(2^j k-m)B_{H,\lambda}(2^{j'} k'-m')dudu'.
\end{equation}
Note that, by conditions \eqref{e:N_psi} and \eqref{e:sum_k^m_phi(.-k)}, 
\begin{equation}\label{eq:vanish}
\int_\mathbb{R} \psi(2^{-j}t)\sum_{\ell\in \bbZ}\phi(t+\ell)\ell^mdt=0, \quad j \geq 0,\quad m=0,\hdots,N_\psi-1.
\end{equation}
Let
\begin{equation}\label{e:g(x)}
g(x) = \frac{\Gamma^2\left( H+\frac{1}{2} \right)}{2\pi} \frac{\sigma^2}{|\lambda^2 + x^2|^{H+\frac{1}{2}}}.
\end{equation}
Recall that ${\mathcal H}_j(x)$ is defined by \eqref{e:Hj}, where the filter sequence is given by \eqref{e:hj,l}. By Proposition 3 in Moulines et al.\ \cite{moulines:roueff:taqqu:2007:JTSA}, $|{\mathcal H}_j(x)|\leq C$, $x\in \bbR$. Thus, by Fubini's theorem, \eqref{e:tfbm_harmonizable} and \eqref{eq:vanish}, we can reexpress \eqref{e:Cov(Dtilde,Dtilde)} as
$$
2^{-\frac{j+j'}{2}}\int_\mathbb{R}\int_\mathbb{R}\int_\mathbb{R} \sum_{\ell\in\Z}\sum_{\ell'\in\Z}\psi(2^{-j}t)\psi(2^{-j'}t')\phi(t+\ell)\phi(t'+\ell')
e^{i(2^jk-\ell)x}e^{-i(2^{j'}k'-\ell')x}g(x)dtdt'dx
$$
\begin{equation}\label{eq:covdis}
= \int_\mathbb{R} {\mathcal H}_j(x)\overline{{\mathcal H}_{j'}(x)}e^{ix(2^jk-2^{j'}k')}g(x)dx,
\end{equation}
where the absolute value of the integral on the right-hand side of \eqref{eq:covdis} is finite. This shows \eqref{e:discspec_j,j'}.

Expression \eqref{e:discspec} is a consequence of \eqref{e:discspec_j,j'} by setting $j = j'$ and $k = k'$.
To show \eqref{e:disc_slrd2}, consider the sequence $\{h_{j,l}\}_{l \in \bbZ}$ as in \eqref{e:hj,l}, and write
\begin{equation}\label{e:Psi-jk}
\psi_{j,k}(t) = 2^{-j/2}\psi(2^{-j}t-k), \quad  \phi_\ell(t)=\phi(t+\ell).
\end{equation}
By \eqref{e:discspec_j,j'} for $j = j'$ and $k = k' = 0$, we can write
\begin{equation}\label{e:Ed^2(2^j,0)_in_terms_of_Hj}
\bbE d^2(2^j,0) = \frac{\sigma^2\Gamma^2\left( H+\frac{1}{2} \right)}{2\pi} \int^{\pi}_{-\pi} \sum_{\ell \in \bbZ} \frac{ |{\mathcal H}_j(x)|^2}{|\lambda^2 + (x + 2 \pi \ell)^2|^{H+\frac{1}{2}}}dx.
\end{equation}
Note that, by Parseval's theorem,
\begin{equation}\label{e:H_Parseval}
\frac{1}{2\pi}\int_{-\pi}^\pi |\mathcal{H}_j(x)|^2dx = \sum_{\ell\in \bbZ}|h_{j,\ell}|^2.
\end{equation}
Since $\psi_{j,k}\in\text{span}\{\phi_\ell\}_{\ell\in \bbZ}$, for all $j\geq 1$, the Pythagorean theorem and expression \eqref{e:Hj} imply that
\begin{equation}\label{e:sum_h^2=1}
\sum_{\ell\in \Z}h_{j,\ell}^2 = \sum_{\ell\in \Z}  |\langle \psi_{j,0},\phi_{\ell}\rangle|^2  = \norm{\psi}^2_{L^2} =1, \quad j \in \bbN.
\end{equation}
Reexpress ${\mathcal H}_j $ as
\begin{equation}\label{e:Hj_expanded_pois_formula}
\mathcal{H}_j(x)  = 2^{j/2}\sum_{\ell \in \Z}  \widehat{\phi}(x+2\pi \ell) \overline{\widehat{\psi}(2^{j}(x+2\pi \ell))}
\end{equation}
(see Moulines et al.\ \cite{moulines:roueff:taqqu:2007:JTSA}, p.\ 180).
For any $0<a<\pi$,
$$
 \int_{[-\pi,\pi]\setminus(-a,a)} |\mathcal{H}_j(x)|^2dx $$
\begin{equation}\label{e:Hj_to_dirac_1}
 \leq \int_{[-\pi,\pi]\setminus(-a,a)}\left||\mathcal{H}_j(x)|^2 - 2^{j}{|\widehat\phi(x)\widehat\psi(2^jx)|^2}\right|dx + \int_{[-\pi,\pi]\setminus(-a,a)}2^{j}{|\widehat{\phi}(x)\widehat\psi(2^jx)|^2}dx.
\end{equation}
 From relation (79) in Moulines et al.\ \cite{moulines:roueff:taqqu:2007:JTSA}, for all $x\in(-\pi,\pi)$,
$$
\left||\mathcal{H}_j(x)|^2 - 2^{j}|\widehat\phi(x)\widehat\psi(2^jx)|^2\right|\leq \frac{2^{j(1+N_\psi-\alpha)}|x|^{2N_\psi}}{(1+2^j|x|)^{N_\psi+\alpha}},
$$
where $\alpha$ is the constant in condition \eqref{e:psihat_is_slower_than_a_power_function}.  Thus, by a change of variable $y = 2^{j}x$, the first summand in \eqref{e:Hj_to_dirac_1} is bounded by
$$
 \int_{[-\pi,\pi]\setminus(-a,a)} \frac{2^{j(1+N_\psi-\alpha)}|x|^{2N_\psi}}{(1+2^j|x|)^{N_\psi+\alpha}}dx = 2\int_{2^{j}a}^{2^j\pi} \frac{2^{j(1+N_\psi-\alpha)}|y/2^j|^{2N_\psi}}{(1+|y|)^{N_\psi+\alpha}}\frac{dy}{2^j}
 $$
 $$=2\int_{2^ja}^{2^j\pi}\frac{2^{j(-N_\psi-\alpha)}|y|^{2N_\psi}}{(1+|y|)^{N_\psi+\alpha}}dy
\leq C \frac{2^{j(N_\psi+1-\alpha)}}{2^{j(N_\psi+\alpha)}} \to 0,
$$
as $j\to\infty$, since $\alpha>1$. Likewise, for the second summand in \eqref{e:Hj_to_dirac_1}, we have
$$
 \int_{[-\pi,\pi]\setminus(-a,a)} 2^{j}{|\widehat{\phi}(x)\widehat\psi(2^jx)|^2}dx =2 \int_{2^ja}^{2^j\pi}{|\widehat{\phi}(2^{-j}y)\widehat\psi(y)|^2}dy \to 0, \quad j \rightarrow \infty,
$$
by the dominated convergence theorem.
Hence, for every $a>0$,
\begin{equation}\label{e:int_restricted_Hj^2(x)dx->0}
\int_{[-\pi,\pi]\setminus(-a,a)} |\mathcal{H}_j(x)|^2dx\to 0, \quad j\to\infty.
\end{equation}
Therefore, by \eqref{e:H_Parseval}, \eqref{e:sum_h^2=1}, and \eqref{e:int_restricted_Hj^2(x)dx->0}, we have
\begin{equation}\label{e:Hlim=1}
\lim_{j\to \infty}\frac{1}{2\pi}\int_{-a}^a |\mathcal{H}_j(x)|^2dx = 1.
\end{equation}
So, define $f(x) = \sum_{\ell\in\bbZ}\frac{\sigma^2 \Gamma^2(H+\frac{1}{2})}{|\lambda^2+(2\pi \ell +x)^2|^{H+\frac{1}{2}}}$, which is continuous on $[-\pi,\pi]$ by the dominated convergence theorem. For a fixed $\varepsilon>0$, choose $0 < a < \pi$ small so that, for $|x|<a$, $|f(x)-f(0)|<\varepsilon$.
Then, by \eqref{e:H_Parseval}, \eqref{e:sum_h^2=1} and \eqref{e:int_restricted_Hj^2(x)dx->0},
$$
\left|\frac{1}{2\pi}\int_{-\pi}^\pi |\mathcal{H}_j(x)|^2f(x)dx-f(0)\right| \leq \frac{1}{2\pi}\int_{-\pi}^\pi |\mathcal{H}_j(x)|^2|f(x)-f(0)| dx
$$
$$
= \frac{1}{2\pi}\int_{-a}^a |\mathcal{H}_j(x)|^2|f(x)-f(0)| dx + \frac{1}{2\pi}\int_{[-\pi,\pi]\setminus[-a,a]} |\mathcal{H}_j(x)|^2|f(x)-f(0)| dx
$$
\begin{equation}\label{e:|int_Hj-f(0)|_bound}
< \frac{\varepsilon}{2\pi} \int_{-a}^a |\mathcal{H}_j(x)|^2 dx  +\frac{1}{\pi}\norm{f}_\infty \int_{[-\pi,\pi]\setminus[-a,a]} |\mathcal{H}_j(x)|^2 dx, 
\end{equation}
where $\|\cdot\|_{\infty}$ is the usual $L^\infty$ norm. By taking $j$ sufficiently large, combined with \eqref{e:Ed^2(2^j,0)_in_terms_of_Hj}, \eqref{e:int_restricted_Hj^2(x)dx->0}, \eqref{e:Hlim=1} and \eqref{e:|int_Hj-f(0)|_bound}, we arrive at \eqref{e:disc_slrd2}.  $\Box$\\
\end{proof}

In the following proposition, we describe the decorrelation property of the wavelet transform for both tfBm \eqref{e:approx_decorrelation} and fBm in discrete time, which is used in Lemmas \ref{fBm_AN} and \ref{Hest_AN}. Hereinafter,
\begin{equation}\label{e:dtfBm_dfBm}
d_{\text{fBm}}(2^j,k), \quad d_{\text{tfBm}}(2^j,k),
\end{equation}
denote the sample wavelet coefficients of fBm and tfBm, respectively. Since the result for fBm is classical, we only provide the proof for tfBm.

\begin{proposition}\label{p:tfbm_decorr}
Let $T=\textnormal{length}(\textnormal{supp}(\psi))$ and choose $j,j' \in \bbN$, $k,k'\in \bbZ$ that satisfy
\begin{equation}\label{e:|2^jk-2^{j'}k'|>=2T*max(2^j,2^{j'})}
\frac{|2^jk-2^{j'}k'|}{\max\{2^j,2^{j'}\}}\geq 2 T.
\end{equation}
Then,
\begin{itemize}
\item[$(i)$] for some constant $C$ that is independent of $j,j',k,k'$,
\begin{equation}\label{e:tfbm_decorr}
|\textnormal{Cov}(d_{\textnormal{tfBm}}(2^j,k),d_{\textnormal{tfBm}}(2^{j'},k'))| \leq C2^{\frac{j+j'}{2}}|2^jk-2^{j'}k'|^{H-1/2}e^{-\lambda|2^jk-2^{j'}k'|};
\end{equation}
\item[$(ii)$] for some constant $C$ that is independent of $j,j',k,k'$,
\begin{equation}\label{e:fbm_decorr}
|\textnormal{Cov}(d_{\textnormal{fBm}}(2^j,k),d_{\textnormal{fBm}}(2^{j'},k'))| \leq C\frac{2^{(j+j')(N_{\psi}+\frac{1}{2})}}{|2^jk-2^{j'}k'|^{2N_{\psi}-2H}}.
\end{equation}
\end{itemize}
\end{proposition}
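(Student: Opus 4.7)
I will establish part $(i)$ (tfBm); part $(ii)$ for fBm is the classical wavelet quasi-decorrelation estimate, obtained by iterating Abel summation $N_\psi$ times---using all the vanishing moments in \eqref{eq:vanish}---to convert the fBm wavelet covariance into a double sum against the autocovariance of an $N_\psi$-th order fGn, whose polynomial decay $|h|^{2H - 2N_\psi}$, combined with filter-size and support estimates, gives \eqref{e:fbm_decorr}; see, e.g., Moulines et al.\ \cite{moulines:roueff:taqqu:2007:JTSA,moulines:roueff:taqqu:2008}.

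For part $(i)$, the natural tool is a contour-deformation argument in the Fourier domain that exploits the branch structure induced by the tempering parameter $\lambda$. Starting from the harmonizable representation \eqref{e:tfbm_harmonizable}, the continuous-time wavelet transform \eqref{e:d(2^j,k)_cont}, and using $\int\psi=0$ (available since $N_\psi\geq 1$), one derives
\[
\Cov\big(d(2^j, k), d(2^{j'}, k')\big) \;=\; C_H\, \sigma^2\, 2^{(j+j')/2} \int_\bbR \frac{e^{-ixD}\, \widehat\psi(2^j x)\, \overline{\widehat\psi(2^{j'} x)}}{(\lambda^2 + x^2)^{H + 1/2}}\, dx,
\]
where $D := 2^j k - 2^{j'} k'$ and $C_H := \Gamma^2(H+1/2)/(2\pi)$; the discrete-time version \eqref{e:discspec_j,j'} differs only by a periodization/aliasing sum handled analogously. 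Since $\psi$ is compactly supported in $[0, T]$, its Fourier transform $\widehat\psi$ extends to an entire function with $|\widehat\psi(2^j(x - i\lambda))| \leq \|\psi\|_1$ uniformly in $x \in \bbR$ and $j \in \bbN \cup \{0\}$.

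The strategy for extracting the factor $e^{-\lambda |D|}$ is, for $D > 0$ (WLOG), to deform the contour $\bbR$ to a Hankel-type contour $\Gamma$ wrapping the branch point of $(\lambda^2 + x^2)^{H+1/2} = (\lambda + ix)^{H+1/2}(\lambda - ix)^{H+1/2}$ at $x = -i\lambda$, which is of order $H + 1/2$. The shift produces the factor $|e^{-ixD}|=e^{D\,\mathrm{Im}(x)}=e^{-\lambda D}$ on the horizontal part of $\Gamma$, and substituting $x = -i\lambda + w$ near the singularity yields a local contribution (up to a constant) of
\[
e^{-\lambda D}\int_{\Gamma_0} \frac{e^{-wD}}{w^{H + 1/2}}\, dw \;=\; C'_\lambda\, e^{-\lambda D}\, D^{H - 1/2},
\]
by the standard Hankel formula, valid for all $H > 0$ by analytic continuation. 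The main obstacle I anticipate is the uniform-in-$(j, j')$ control of the complementary contour contributions (the vertical segments and the residual arc at $|x| \to \infty$) together with the parallel treatment of the aliasing sum in the discrete-time formula \eqref{e:discspec_j,j'}, which requires showing the periodic sum over $\ell$ inherits the same exponential decay. The uniform bound $|\widehat\psi(2^j(\cdot - i\lambda))| \leq \|\psi\|_1$, combined with the integrability of $(\lambda^2 + x^2)^{-(H+1/2)}$ along the shifted contour, should suffice to control these error terms and produce \eqref{e:tfbm_decorr}.
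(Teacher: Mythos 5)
Your Fourier--contour route is genuinely different from the paper's argument, but as written it has a gap at exactly the point you flag as an ``obstacle,'' and the tool you propose for it does not work. When you shift the contour to $\mathrm{Im}(x)=-\lambda$ (for $D>0$), the bound $|\widehat\psi(2^j(x-i\lambda))|\leq\|\psi\|_{L^1}$ is valid only for the \emph{un}conjugated factor: since $\mathrm{supp}(\psi)=[0,T]$, $\widehat\psi$ is bounded in the closed lower half-plane, but the analytic continuation of the conjugated factor $x\mapsto\overline{\widehat\psi(2^{j'}x)}=\int_0^T e^{i2^{j'}xt}\psi(t)\,dt$ grows there, with $|\overline{\widehat\psi}(2^{j'}(u-i\lambda))|\leq e^{\lambda 2^{j'}T}\|\psi\|_{L^1}$; the same happens for $\overline{{\mathcal H}_{j'}(x)}=\sum_\ell h_{j',\ell}e^{-ix\ell}$ in the discrete-time formula, since $\ell$ runs down to about $-2^{j'}T$. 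This exponential-in-scale factor $e^{\lambda 2^{j'}T}$ is not a harmless error term: it is precisely what the hypothesis \eqref{e:|2^jk-2^{j'}k'|>=2T*max(2^j,2^{j'})} must be traded against, and without making that trade explicit you do not get a constant independent of $j,j'$; even after the trade, a shift all the way to $-i\lambda$ yields at best a reduced rate of order $e^{-\lambda|D|/2}$ rather than the displayed $e^{-\lambda|D|}$, so the contour depth and the support bookkeeping have to be handled together, which your sketch leaves open. A second, smaller issue is that the proposition concerns the discrete-time coefficients \eqref{e:disc2}, so the object to bound is \eqref{e:discspec_j,j'} (equivalently the unfolded integral \eqref{eq:covdis} against ${\mathcal H}_j\overline{{\mathcal H}_{j'}}$), and ``handled analogously'' hides the fact that the conjugated filter exhibits the same growth as above.

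For comparison, the paper avoids complex analysis entirely: it works in the time domain with the exact covariance \eqref{e:cov}, uses the vanishing-moment identity \eqref{eq:vanish} to annihilate all terms depending on only one time index as well as the constant part of $C^2_{t-s}|t-s|^{2H}$, so that only the Bessel term $|t-s|^H K_H(\lambda|t-s|)$ survives; the support hypothesis then gives $|(\ell'-\ell)/r|\leq 3/4$ uniformly over the finitely many nonzero filter terms, and the asymptotics $K_H(t)\sim\sqrt{\pi/(2t)}\,e^{-t}$ deliver the exponential factor, with the counting of the $O(2^jT)\times O(2^{j'}T)$ nonzero terms producing the prefactor $2^{(j+j')/2}$. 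If you want to salvage your approach, you must (a) continue the conjugated factor honestly and quantify its $e^{\lambda 2^{j'}T}$ growth against $e^{-\lambda|D|}$ using \eqref{e:|2^jk-2^{j'}k'|>=2T*max(2^j,2^{j'})} (or shift by a depth $<\lambda$ and accept a degraded rate), and (b) carry this out for ${\mathcal H}_j\overline{{\mathcal H}_{j'}}$, not just $\widehat\psi(2^j\cdot)\overline{\widehat\psi(2^{j'}\cdot)}$. As for part $(ii)$, citing the classical fBm decorrelation estimate is consistent with the paper, which likewise omits that proof.
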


\begin{proof} We only show $(i)$. Without loss of generality we can assume that $\psi$ and $\phi$ are both supported in $[0,T]$. By \eqref{e:Cov(Dtilde,Dtilde)} and Fubini's theorem, we can write
$$
\textnormal{Cov}(d_{\textnormal{tfBm}}(2^j,k),d_{\textnormal{tfBm}}(2^{j'},k'))
$$
\begin{equation}\label{e:Cov(dtfbm,dtfbm)}
=2^{-\frac{j+j'}{2}}\int_\mathbb{R}\int_\mathbb{R}\sum_{\ell\in\Z}\sum_{\ell'\in\Z} \psi(2^{-j}t)\psi(2^{-j'}t')\phi(t+\ell)\phi(t'+\ell')\mathbb{E}[B_{H,\lambda}(2^j k-\ell)B_{H,\lambda}(2^{j'} k'-\ell')]dtdt'.
\end{equation}
Recall that the covariance function of tfBm is given by \eqref{e:cov}. Then, in expression \eqref{e:Cov(dtfbm,dtfbm)}, by \eqref{eq:vanish},
$$
\int_\mathbb{R}\psi(2^{-j}t)\sum_{\ell\in\Z}\phi(t+\ell)dt \left[C^2_{2^{j'} k'-\ell'} |2^{j'} k'-\ell'|^{2H}\right] = 0,
$$
 and similarly for the term involving $t',\ell',j,k$.  Thus, we can rewrite \eqref{e:Cov(dtfbm,dtfbm)} as
$$
-2^{-\frac{j+j'}{2}}\frac{\sigma^2}{2}\int_\mathbb{R}\int_\mathbb{R}\sum_{\ell\in\Z}\sum_{\ell'\in\Z} \psi(2^{-j}t)\psi(2^{-j'}t')\phi(t+\ell)\phi(t'+\ell')C^2_{2^j k-2^{j'}k'+\ell'-\ell} |2^j k-2^{j'}k'+\ell'-\ell|^{2H}dtdt'
$$
$$
= M2^{-\frac{j+j'}{2}}\int_\mathbb{R}\int_\mathbb{R}\sum_{\ell\in\Z}\sum_{\ell'\in\Z} \psi(2^{-j}t)\psi(2^{-j'}t')\phi(t+\ell)\phi(t'+\ell') |2^j k-2^{j'} k' +\ell' - \ell|^{H}K_{H}(\lambda|2^j k-2^{j'} k' +\ell' - \ell|)dtdt'
$$
$$
=M|r|^{H}2^{-\frac{j+j'}{2}}\int_\mathbb{R}\int_\mathbb{R}\sum_{\ell\in\Z}\sum_{\ell'\in\Z} \psi(2^{-j}t)\psi(2^{-j'}t')\phi(t+\ell)\phi(t'+\ell')
$$
\begin{equation}\label{e:Cov(dtfbm,dtfbm)_after_applying_Npsi}
\times \left| 1 +  \frac{\ell' - \ell}{{r}}\right|^{H}K_{H}\left(\lambda|r| \left| 1 +  \frac{\ell' - \ell}{{r}}\right|\right)dtdt',
\end{equation}
where
\begin{equation}\label{e:r=2^jk-2^j'k'}
r=2^jk-2^{j'}k'
\end{equation}
and $M=\frac{\sigma^2\Gamma(H+\frac{1}{2})}{\sqrt{\pi}(2 \lambda)^{H}} $. Note that $\textnormal{supp}(\phi(\cdot+\ell)) =[-\ell,-\ell + T]$, and $\text{supp}(\psi(2^{-j}\cdot)) = [0,2^j T]$.  Therefore, $\phi(t+l)\psi(2^{-j}t)=0$ if $(-\ell,-\ell + T)\cap (0,2^j T)=\varnothing$, i.e., if $\ell\leq -2^{j}T$ or if $\ell\geq T$, and so each sum ranges over $\ell\in(-2^{j}T,T) =:S$ and $\ell'\in(-2^{j'}T,T)=:S'$. Thus, by assumption \eqref{e:|2^jk-2^{j'}k'|>=2T*max(2^j,2^{j'})},
\begin{equation}\label{e:(l-l')/|r|<.75}
\left| \frac{\ell' - \ell}{{r}}\right|\leq \left| \frac{\ell' - \ell}{2T\max\{2^j,2^{j'}\}}\right|\leq\frac{T(1+\max\{2^j,2^{j'})\}}{2T\max\{2^j,2^{j'}\}}\leq \frac{3}{4}.
\end{equation}
By making use of asymptotic expression
$$
K_H(t)\sim \sqrt{\frac{\pi}{2t}}e^{-t}, \quad t\to \infty
$$
(e.g. Olver et al \cite{olver2010}, p.\ 249, expression 10.25.3), for large $|r|$ we have
$$
MK_{H}\left(\lambda|r| \left| 1 +  \frac{\ell' - \ell}{2^j k-2^{j'} k'}\right|\right)\leq M'|r|^{-1/2}e^{-\lambda|r|},
$$
where the constant $M'$ is independent of $\ell,\ell'$.  Thus, expression \eqref{e:Cov(dtfbm,dtfbm)_after_applying_Npsi} is bounded by
\begin{equation}\label{e:Cov(dtfbm,dtfbm)_bounded}
M'|r|^{H-1/2}e^{-\lambda|r|} 2^{-\frac{j+j'}{2}}\int_\mathbb{R}\int_\mathbb{R}\sum_{\ell\in S}\sum_{\ell'\in S'} |\psi(2^{-j}t)\psi(2^{-j'}t')\phi(t+\ell)\phi(t'+\ell' )|dtdt'.
\end{equation}
Observe that each sum over $\ell$ and $\ell'$ in \eqref{e:Cov(dtfbm,dtfbm)_bounded} contains only $(2^{j}+1)T-1$ and $(2^{j'}+1)T-1$ terms, respectively, and recall that $\psi$ is bounded. Thus, after interchanging the summation and integration, each summand is bounded by the constant $\|\psi\|_{\infty}\|\phi\|_{L^1}$. Thus, \eqref{e:Cov(dtfbm,dtfbm)_bounded} is bounded by
$$
M''|r|^{H-1/2}e^{-\lambda|r|} 2^{-\frac{j+j'}{2}} 2^{j+j'}\|\psi\|^2_{\infty}\|\phi\|^2_{L^1} \leq C2^{\frac{j+j'}{2}}|r|^{H-1/2}e^{-\lambda|r|}.
$$
This establishes \eqref{e:tfbm_decorr}. $\Box$\\

\end{proof}
\begin{remark}
By a similar proof, expression \eqref{e:tfbm_decorr} can also be established starting from continuous time measurements.
\end{remark}

\begin{proposition}\label{p:cont_time_wavelet_spec}
Expression \eqref{e:cont_time_wavelet_spec} holds.
\end{proposition}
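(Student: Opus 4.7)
The plan is to compute $\E d^2(2^j,k)$ by inserting the harmonizable representation \eqref{e:tfbm_harmonizable} of tfBm into the continuous-time wavelet coefficient \eqref{e:d(2^j,k)_cont}, performing a stochastic Fubini, and then evaluating the resulting second moment via the Wiener isometry for $\widetilde B$. Since $\bth$ is fixed and the right-hand side of \eqref{e:cont_time_wavelet_spec} is independent of $k$, it suffices to compute $\E d^2(2^j,k)$ for one fixed $k$ (say $k=0$); shift invariance follows along the way.

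First I would substitute \eqref{e:tfbm_harmonizable} into \eqref{e:d(2^j,k)_cont} and apply a stochastic Fubini theorem to write
$$
d(2^j,k) \stackrel{\mathcal L}= \frac{\Gamma(H+\tfrac12)}{\sqrt{2\pi}} \int_{\R} \frac{1}{(\lambda+ix)^{H+1/2}} \Bigl[ \int_{\R} 2^{-j/2}\psi(2^{-j}t-k)(e^{-ixt}-1)\,dt \Bigr]\widetilde B(dx).
$$
The interchange is justified by the two integrability conditions noted in the remark preceding the proposition, exactly as in Proposition 3.1 of Abry and Didier \cite{abry:didier:2018}: continuity of the covariance and absolute integrability of $|\psi(2^{-j}s-k)\psi(2^{-j}t-k) \E B_{H,\lambda}(s)B_{H,\lambda}(t)|$ on $\R^2$ ensure that both deterministic and stochastic integrals exist and can be exchanged. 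Next, I would use the vanishing moment condition \eqref{e:N_psi} (with $p=0$) to eliminate the ``$-1$'' contribution, since $\int_\R \psi(2^{-j}t-k)\,dt = 2^j \int_\R \psi(u)\,du = 0$, and a change of variable $u = 2^{-j}t - k$ to identify the remaining inner integral as
$$
\int_{\R} 2^{-j/2}\psi(2^{-j}t-k)\,e^{-ixt}\,dt = 2^{j/2}\, e^{-i 2^j k x}\, \overline{\widehat{\psi}(2^j x)}.
$$
Thus
$$
d(2^j,k) \stackrel{\mathcal L}= \frac{\Gamma(H+\tfrac12)}{\sqrt{2\pi}}\int_{\R} \frac{2^{j/2} e^{-i 2^j k x}\,\overline{\widehat{\psi}(2^j x)}}{(\lambda+ix)^{H+1/2}}\,\widetilde B(dx).
$$

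Finally, I would apply the isometry $\E|\widetilde B(dx)|^2 = \sigma^2\,dx$, noting that the modulus squared kills the phase factor $e^{-i 2^j k x}$ (hence confirming $k$-independence), to obtain
$$
\E d^2(2^j,k) = \frac{\Gamma^2(H+\tfrac12)}{2\pi}\int_{\R}\frac{\sigma^2\,2^j\,|\widehat{\psi}(2^j x)|^2}{|\lambda^2+x^2|^{H+1/2}}\,dx,
$$
and the change of variable $y = 2^j x$ yields \eqref{e:cont_time_wavelet_spec}.

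The only non-mechanical step is the stochastic Fubini; but given that the remark already records the two hypotheses of Cram\'er--Leadbetter and points to Abry and Didier \cite{abry:didier:2018} for an analogous argument, this is essentially routine. The remaining manipulations (vanishing moment, Fourier identity, isometry, dyadic rescaling) are direct computations.
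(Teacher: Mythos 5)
Your proposal is correct and follows essentially the same route as the paper: substitute the harmonizable representation \eqref{e:tfbm_harmonizable} into \eqref{e:d(2^j,k)_cont}, justify the interchange of integrals via second moments/Fubini, drop the ``$-1$'' term via the zeroth vanishing moment, identify the remaining integral as $2^{j/2}e^{-i2^jkx}\widehat{\psi}(2^jx)$, and conclude by the isometry and the change of variable $y=2^jx$. (Note only that with the paper's convention $\widehat{f}(x)=\int e^{-ixt}f(t)dt$ the inner integral gives $\widehat{\psi}(2^jx)$, not its conjugate, though this is immaterial once the modulus squared is taken.)
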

\begin{proof}
By the harmonizable representation of tfBm (see \eqref{e:tfbm_harmonizable}), the change of variable $s = 2^{-j}t - k$, condition \eqref{e:N_psi} and the change of variable $2^j x = y$, we can rewrite \eqref{e:d(2^j,k)_cont} as
$$
2^{-j/2}d(2^j,k) = \int_{{\Bbb R}} \psi(s) B_{H,\lambda}(2^j(s+k))ds
\stackrel{{\mathcal L}}=  \frac{\Gamma \left(H + \frac{1}{2}\right)}{\sqrt{2 \pi}}\int_{{\Bbb R}} \psi(s) \Big\{ \int_{{\Bbb R}} \frac{e^{-i2^j(s+k)x}-1}{(\lambda + ix)^{H + \frac{1}{2}}} \widetilde{B}(dx)\Big\}ds
$$
$$
=\frac{\Gamma(H + 1/2)}{\sqrt{2\pi}}\int_{{\Bbb R}} \frac{e^{-i 2^{j}kx}}{(\lambda + ix)^{H + \frac{1}{2}}} \Big\{\int_{{\Bbb R}} e^{-i(2^jx)s}\psi(s)ds\Big\}\widetilde{B}(dx)
$$
\begin{equation}\label{e:d(2j,k)_harmoniz}
= \frac{\Gamma(H + 1/2)}{\sqrt{2\pi}} \int_{{\Bbb R}} \frac{e^{-i 2^{j}kx}}{(\lambda + ix)^{H + \frac{1}{2}}} \widehat{\psi}(2^j x)\widetilde{B}(dx)
\stackrel{{\mathcal L}}= 2^{-j/2}\frac{\Gamma(H+1/2)}{\sqrt{2\pi}} \int_{{\Bbb R}} \frac{e^{-i ky}}{(\lambda + i\frac{y}{2^j})^{H + \frac{1}{2}}} \widehat{\psi}(y)\widetilde{B}(dy).
\end{equation}
The exchanging of integration order in the first equality $\stackrel{{\mathcal L}}=$ in \eqref{e:d(2j,k)_harmoniz} is justified by taking second moments and applying Fubini's theorem (cf.\ Abry et al.\ \cite{abry:didier:li:2018}, Proposition B.1, proof of property $(P2)$). By taking expectations, \eqref{e:cont_time_wavelet_spec} holds. $\Box$\\

\end{proof}

\newcommand{\diag}{\text{diag}}
\newcommand{\deq}{\stackrel{d}{=}}

In the following lemma, we establish the asymptotic normality of the (discrete time) wavelet variances \eqref{e:W(2j)_tfBm} of tfBm.
\begin{lemma}\label{l:ANwspec}
Fix $m \in \bbN$, and let $j_1,\hdots,j_m$ be a range of octaves. Then,
\begin{equation}\label{e:sqrt(nj)(W(2^j)-Ed^2(j,0))->N(0,sigma^2)}
\left(\sqrt{n} \left(W(2^j) - \E d_{\textnormal{tfBm}}^2(2^j,0)\right)\right)_{j=j_1,\ldots,j_m} \stackrel{d}\to \mathcal{N}(\mathbf{0},F), \quad n \rightarrow \infty,
\end{equation}
where $F=(F_{j,j'})_{j,j'=j_1,\ldots,j_m}$,
\begin{equation}\label{e:Fjj'}
F_{j,j'} =  2^{\min\{j,j'\}+1}\sum_{r\in\Z} \zeta^2(j,j', r 2^{\min\{j,j'\}}),
\end{equation}
and
\begin{equation}\label{e:zeta_def}
\zeta(j,j',r): = \frac{\sigma^2\Gamma^2\left( H+\frac{1}{2} \right)}{2\pi} \int^{\pi}_{-\pi} e^{ixr} \sum_{\ell \in \bbZ}  \frac{ {\mathcal H}_j(x)\overline{{\mathcal H}_{j'}(x)}}{|\lambda^2 + (x + 2 \pi \ell)^2|^{H+\frac{1}{2}}}dx.
\end{equation}
\end{lemma}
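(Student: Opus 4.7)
The plan is to prove asymptotic normality via the Cramér--Wold device combined with the method of cumulants for quadratic forms in Gaussian variables. Fix an arbitrary $\alpha=(\alpha_{j_1},\ldots,\alpha_{j_m})\in\R^m$ and set
$$
S_n := \sqrt{n}\sum_{j=j_1}^{j_m} \alpha_j \bigl(W(2^j)-\E d^2_{\textnormal{tfBm}}(2^j,0)\bigr)=\mathbf{Y}_n^\top D_n \mathbf{Y}_n-\E\mathbf{Y}_n^\top D_n \mathbf{Y}_n,
$$
where $\mathbf{Y}_n$ is the centered Gaussian vector listing all coefficients $d(2^j,k)$ for $j\in\{j_1,\ldots,j_m\}$, $k\in\{1,\ldots,n_j\}$, $\Sigma_n:=\Cov(\mathbf{Y}_n)$, and $D_n$ is diagonal with entries $\alpha_j\sqrt{2^j/n_j}$ (using $\sqrt{n}/n_j=\sqrt{2^j/n_j}$). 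It suffices to show that all cumulants of $S_n$ converge to those of $\mathcal{N}(0,\alpha^\top F\alpha)$, since a centered Gaussian is determined by its cumulants. The classical Gaussian cumulant identity for quadratic forms gives
$$
\kappa_p(S_n) \;=\; 2^{p-1}(p-1)!\,\operatorname{tr}\bigl((D_n\Sigma_n)^p\bigr), \qquad p\geq 2.
$$

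For $p=2$, Isserlis's theorem yields $\Cov(d^2(2^j,k),d^2(2^{j'},k'))=2\,\zeta^2(j,j',2^jk-2^{j'}k')$ (see \eqref{e:discspec_j,j'} and \eqref{e:zeta_def}), so
$$
\kappa_2(S_n)=\sum_{j,j'}\alpha_j\alpha_{j'}\,\frac{2n}{n_jn_{j'}}\sum_{k=1}^{n_j}\sum_{k'=1}^{n_{j'}}\zeta^2(j,j',2^jk-2^{j'}k').
$$
For $j=j'$, set $h=k-k'$ and use $n/n_j^2=2^j/n_j$ to recover $2\alpha_j^2\,2^j\sum_{r\in\Z}\zeta^2(j,j,2^jr)=\alpha_j^2 F_{j,j}$. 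For $j\neq j'$, WLOG $p:=\min(j,j')<q:=\max(j,j')$; the values $2^jk-2^{j'}k'$ traverse the lattice $2^p\Z$, and a change of variables (with appropriate block decomposition of the double sum over the coset structure induced by $2^{q-p}$) combined with the exponential/power bound from Proposition~\ref{p:tfbm_decorr} yields the stated $F_{j,j'}=2^{p+1}\sum_{r\in\Z}\zeta^2(j,j',r\,2^p)$. Dominated convergence is justified by the fact that $\zeta^2(j,j',r)$ decays like $e^{-2\lambda|r|}|r|^{2H-1}$ for large $|r|$ (Proposition~\ref{p:tfbm_decorr}(i)), which is summable on $2^p\Z$.

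For $p\geq 3$, using $\operatorname{tr}(B^p)=\sum_i\mu_i^p\leq\|B\|_{\mathrm{op}}^{p-2}\operatorname{tr}(B^2)$ for symmetric $B=D_n\Sigma_n$,
$$
|\kappa_p(S_n)|\;\leq\;2^{p-1}(p-1)!\,\|D_n\Sigma_n\|_{\mathrm{op}}^{p-2}\,\operatorname{tr}\bigl((D_n\Sigma_n)^2\bigr).
$$
The factor $\operatorname{tr}((D_n\Sigma_n)^2)=\tfrac12\kappa_2(S_n)$ stays bounded. Moreover, $\|D_n\|_{\mathrm{op}}=\max_j|\alpha_j|\sqrt{2^j/n_j}=O(n^{-1/2})\to 0$. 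It remains to bound $\|\Sigma_n\|_{\mathrm{op}}$ uniformly; by the Schur test this follows from a uniform bound on row sums
$$
\sum_{(j',k')}\bigl|\Cov(d(2^j,k),d(2^{j'},k'))\bigr|.
$$
For each of the finitely many $j'$, split according to whether $|2^jk-2^{j'}k'|$ exceeds $2T\max(2^j,2^{j'})$: the near-diagonal contribution involves only $O(1)$ lags and is bounded by Cauchy--Schwarz via the bounded marginal variances; the far-diagonal contribution is controlled by Proposition~\ref{p:tfbm_decorr}(i), whose exponential factor $e^{-\lambda|2^jk-2^{j'}k'|}$ makes the series convergent uniformly in $k$. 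Hence $\|\Sigma_n\|_{\mathrm{op}}=O(1)$ and $\kappa_p(S_n)\to 0$ for $p\geq 3$, yielding the Cramér--Wold limit and thus \eqref{e:sqrt(nj)(W(2^j)-Ed^2(j,0))->N(0,sigma^2)}.

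The main obstacle is Step~4 (variance convergence) for cross terms $j\neq j'$: tracking the lattice structure of $\{2^jk-2^{j'}k'\}_{k,k'}$ and using the exponential-in-$|r|$ decay of $\zeta(j,j',r)$ (which relies crucially on tempering, not merely on vanishing moments) to justify the Riemann-sum/dominated-convergence passage to $F_{j,j'}$. The cumulant vanishing in Step~5 is then a relatively routine consequence of summable covariances and the $1/\sqrt{n}$-smallness of $D_n$.
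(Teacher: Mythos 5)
Your proposal is correct, and it rests on exactly the same quantitative ingredients as the paper's proof: the Cram\'er--Wold reduction to a quadratic form in the Gaussian vector of wavelet coefficients, the Isserlis identity giving $\Cov(d^2(2^j,k),d^2(2^{j'},k'))=2\zeta^2(j,j',2^jk-2^{j'}k')$, the tempering-induced decay of Proposition \ref{p:tfbm_decorr}$(i)$ to make $\sum_r \zeta^2(j,j',r)<\infty$ and to obtain a uniform row-sum (Schur) bound on the covariance matrix of the coefficients, and the $O(n^{-1/2})$ size of the diagonal weights. Where you diverge is the final limit-theorem mechanism: you close the argument with the cumulant formula $\kappa_p(S_n)=2^{p-1}(p-1)!\,\operatorname{tr}((D_n\Sigma_n)^p)$ and kill all cumulants of order $p\geq 3$ via $\|D_n\|_{\mathrm{op}}=O(n^{-1/2})$ and $\|\Sigma_n\|_{\mathrm{op}}=O(1)$, whereas the paper diagonalizes $\Gamma_n^{1/2}D_n\Gamma_n^{1/2}$, represents $U_n$ as a weighted sum of independent $\chi^2_1$ variables and applies the Lindeberg CLT under the condition $\lambda_{\max}(n)=o(\sqrt{\Var U_n})$; the two routes are essentially interchangeable here, the cumulant route being slightly more self-contained and the paper's route avoiding any appeal to cumulant formulas. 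Two small points to tighten: $D_n\Sigma_n$ is not symmetric in general (and $D_n$ need not be positive since the $\alpha_j$ have arbitrary signs), so apply the eigenvalue/trace inequality to the symmetric matrix $\Sigma_n^{1/2}D_n\Sigma_n^{1/2}$, which has the same spectrum; and your ``coset decomposition'' passage for the cross terms $j\neq j'$ is precisely the lattice-counting limit that the paper delegates to Lemma B.3 of Abry and Didier \cite{abry:didier:2018}, so either cite that lemma or write out the count of solutions of $2^jk-2^{j'}k'=r$ explicitly, as is done in the proof of Lemma \ref{Cov_scale_to_infty}.
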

\begin{proof}
The proof is similar to that of Theorem 3.1 in Abry and Didier \cite{abry:didier:2018}. We provide the details for reader's convenience. The argument makes use of the Cram\'{e}r-Wold device and the Lindeberg central limit theorem.  Define the vector of wavelet coefficients
$$
V_n:=\left(d_{\textnormal{tfBm}}(2^{j_1},1),\ldots,d_{\textnormal{tfBm}}(2^{j_1},n_j),d_{\textnormal{tfBm}}(2^{j_2},1),\ldots,d_{\textnormal{tfBm}}(2^{j_m},n_{j_m})\right)^\top\in  \R^\nu,
$$
where $\nu={n_{j_1}+\ldots+n_{j_m}}$. 
Fix $\mathbf{a} = (a_1,\ldots,a_m)\in\R^m$, and consider the statistic $U_n:=\sum_{j=j_1}^{j_m}a_jW(2^j)$.   Our goal is to establish the limiting distribution of the statistic $\sqrt{n}U_n$.  Write
$$
D_n=\diag \Big(\underbrace{\frac{a_1}{n_{j_1}},\ldots,\frac{a_1}{n_{j_1}}}_{n_{j_1} \text{ terms}},\underbrace{\frac{a_2}{n_{j_2}},\ldots,\frac{a_2}{n_{j_2}}}_{n_{j_2} \text{ terms}},\ldots,\underbrace{\frac{a_m}{n_{j_m}},\ldots,\frac{a_m}{n_{j_m}}}_{n_{j_m}\text{ terms}}\Big) \in \R^{\nu\times \nu},
$$
where $\diag$ denotes a diagonal matrix. Then, $V_n^\top D_n V_n=U_n$. Write $\Gamma_n = \E V_nV_n^\top$, and consider the spectral decomposition of the matrix $\Gamma^{1/2}_nD_n\Gamma^{1/2}_n = O_n^\top\Lambda_n O_n$.  Note $\Gamma^{-1/2}_nV_n \deq Z_n\sim \mathcal{N}(0,I)$, where $I$ is the $\nu\times\nu$ identity matrix. Thus, $U_n=V_n^\top D_n V_n \deq Z_n^\top\Gamma_n^{1/2}D_n\Gamma_n^{1/2}Z_n = Z_n^\top O_n^\top\Lambda_n O_nZ_n \deq Z_n^\top\Lambda_n Z_n =\sum_k \xi_k^2\lambda_k$, where $\lambda_k$ is the $k^{\textnormal{th}}$ diagonal entry of the matrix $\Lambda_n$, and $\xi_k$ is the $k^{\textnormal{th}}$ component of the vector $Z_n$.  Note that $\E(\sum_k \xi_k^2\lambda_k) = \sum_k \lambda_k$. 
By the Lindeberg central limit theorem, it suffices to show that
\begin{equation}\label{e:lambda-max=o(sqrt(VarTn))}
\max_{k = 1,\hdots, \nu}\lambda_k=:\lambda_{\max}(n) =  o\left(\sqrt{\Var U_n}\right),\quad n\to\infty
\end{equation}

Note that
 \begin{align*}
 \Var U_n = \sum_{j=j_1}^{j_m}\sum_{j'=j_1}^{j_m} {a_ja_{j'}} \Cov(W(2^j),W(2^{j'})).
  \end{align*}
  For arbitrary $j,j',k,k'$, the Isserlis theorem (e.g., Vignat \cite{vignat:2012}) implies that
\begin{equation}\label{e:Isserlis=>Cov=2(E)^2}
  \Cov( d^2(2^j,k),d^2(2^{j'},k')) = 2 \left(\E d(2^{j'},k')d(2^j,k)\right)^2.
 \end{equation}
  Hence,
  \begin{equation}\label{e:Cov(W(2^j),W(2^j')} \Cov( W(2^j),W(2^{j'})) = 
    \frac{2}{n_jn_{j'}} \sum_{k=1}^{n_j}\sum_{k'=1}^{n_{j'}} \left(\E d_{\textnormal{tfBm}}(2^{j'},k')d_{\textnormal{tfBm}}(2^j,k)\right)^2.
  \end{equation}
When $|2^jk-2^{j'}k'|$ is large enough,  Propositions \ref{p:wavelet_spectrum} and \ref{p:tfbm_decorr} imply that $\zeta(j,j',2^jk-2^{j'}k')=|\E d_{\textnormal{tfBm}}(2^j,k)d_{\textnormal{tfBm}}(2^{j'},k')|\leq C|2^jk-2^{j'}k'|^{H-1/2}e^{-\lambda|2^jk-2^{j'}k'|}$, regardless of the sample size $n$.
Consequently, $\sum_{r\in\Z}\zeta^2(j,j',r)<\infty$. Thus, as $n\to\infty$, i.e., $n_j,n_{j'}\to\infty$, by Lemma B.3 in Abry and Didier \cite{abry:didier:2018},
 \begin{equation}\label{e:where_B3_is_used}
  \frac{2^{j+j'}}{n}\sum_{k=1}^{n_j}\sum_{k'=1}^{n_{j'}} \zeta^2(j,j',{2^jk-2^{j'}k'}) \to 2^{\min\{j,j'\}}\sum_{r\in\Z} \zeta^2(j,j',r2^{\min\{j,j'\}}).
  \end{equation}
  This gives
  $$
 n\Cov( W(2^j),W(2^{j'})) \to  2^{\min\{j,j'\}+1}\sum_{r\in\Z} \zeta^2(j,j',r2^{\min\{j,j'\}}), \quad n \rightarrow \infty,
  $$
  i.e.,
  \begin{equation}\label{e:nVarTn_conv}
  n \Var U_n  \to   \sum_{j=j_1}^{j_m}\sum_{j'=j_1}^{j_m} a_ja_{j'} F_{j,j'}, \quad n \rightarrow \infty.
  \end{equation}
Thus, $\sqrt{\Var U_n} = \mathcal O(\frac{1}{\sqrt{n}})$. Now, for $\lambda_{\max}(n)$ as on the left-hand side of \eqref{e:lambda-max=o(sqrt(VarTn))},
 $$
 \lambda_{\max}(n) = \sup_{\|u\|=1} u^\top\Gamma_n^{1/2}D_n\Gamma_n^{1/2}u\leq \max_{k} |(D_n)_{kk}| \sup_{\|u\|=1}u^\top\Gamma_nu.
 $$
Further noting that as $n\to\infty$, $\max_{k} |(D_n)_{kk}| \sim C n^{-1}$, we need only to study the eigenvalues of the matrix $\Gamma_n$.  Consider the bound
\begin{equation}\label{ANmaxbound}
 \sup_{\|u\|=1}u^\top\Gamma_nu  \leq \max_{\ell=1,\ldots,\nu} \sum_{k=1}^{\nu} |(\Gamma_n)_{k,\ell}|
 \leq m \max_{j,j'=j_1,\ldots,j_m}\max_{k=1,\ldots,n_j} \sum_{k'=1}^{n_{j'}} | \E d_{\textnormal{tfBm}}(2^j,k)d_{\textnormal{tfBm}}(2^{j'},k')|.
 \end{equation}
Now, take $M$ large so that \eqref{e:approx_decorrelation} holds whenever $|2^jk-2^{j'}k'|>M$.  Note that, for any $k'$, $j$, and $j'$,
$$
\#\{k:|2^jk-2^{j'}k'|<M\} =  \#\{k:  2^{j'-j}k'- 2^{-j}M<k<2^{j'-j}k'+ 2^{-j}M\}
$$
$$
\leq 2^{1-j}M+1  \leq 2M+1.
$$
So, for large $n$, all but finitely many terms satisfy the bound \eqref{e:approx_decorrelation}, and the maximum number of which do not satisfy the bound is independent of $k'$, $j$ and $j'$.  Furthermore, for arbitrary $k,k',j,j'$,
$$
|\E d_{\textnormal{tfBm}}(2^j,k)d_{\textnormal{tfBm}}(2^{j'},k')| \leq |\E d_{\textnormal{tfBm}}^2(2^j,0)\E d_{\textnormal{tfBm}}^2(2^{j'},0)|^{1/2}$$$$ \leq {\sup_{j,j'}}\hspace{1mm}|\E d_{\textnormal{tfBm}}^2(2^j,0)\E d_{\textnormal{tfBm}}^2(2^{j'},0)|^{1/2}<\infty
$$
by \eqref{e:disc_slrd2}, i.e., the finitely many terms in the sum on the right-hand side of \eqref{ANmaxbound} that do not satisfy the bound are bounded by a constant irrespective of $n$.  Thus, for some $C>0$,
\begin{equation}\label{e:mmaxmax_sum<infty}
m \max_{j,j'=j_1,\ldots,j_m}\max_{k=1,\ldots,n_j} \sum_{k'=1}^{n_{j'}} | \E d_{\textnormal{tfBm}}(2^j,k)d_{\textnormal{tfBm}}(2^{j'},k')|
\leq C \sum_{r\neq 0} |r|^{H-1/2}e^{-\lambda|r|} <\infty.
 \end{equation}
Hence, by \eqref{ANmaxbound} and \eqref{e:mmaxmax_sum<infty} the eigenvalues of the sequence of matrices $\{\Gamma_n\}_{n\in\N}$ are bounded, and thus $\lambda_{\text{max}}(n)\sim C n^{-1}$. This establishes \eqref{e:sqrt(nj)(W(2^j)-Ed^2(j,0))->N(0,sigma^2)}. $\Box$\\
\end{proof}

\noindent {\sc Proof of Theorem \ref{t:asympt}}: First, we show (\emph{i}). Let $f_n$ be as in \eqref{e:def_estimator}. Also, {let $B(j)$ be as in \eqref{e:B(j)}}. Note that, for each $j$,
{\begin{equation}\label{e:B(j)=o(1/n)}
B(j) = \mathcal O(n^{-1}).
\end{equation}
(e.g. Olver et al \cite{olver2010}, p.\ 140, expression 5.11.2). Let ${\widehat{\boldsymbol \theta}}_{n}$ be a} minimum of the objective function $f_n$ restricted to $\overline{U}_0$. Since $f_n$ is continuous and the closure $\overline{U}_0$ of the neighborhood is compact, such minimum exists.  By \eqref{e:B(j)=o(1/n)} and Lemma \ref{l:ANwspec},
$f_n({\boldsymbol \theta}_0)\pto 0$, and thus
$$
0\leq f_n({\widehat{\boldsymbol \theta}}_{n})\leq f_n({\boldsymbol \theta}_0)\pto 0, \quad n \rightarrow \infty,
$$
where the inequality stems from the fact that ${\boldsymbol \theta}_0 \in \overline{U}_0$. Hence, $ f_n({\widehat{\boldsymbol \theta}}_{n})\pto 0$.

 By way of contradiction, suppose the convergence does not hold, i.e., there exists some $c>0$ and a subsequence $\widehat{\boldsymbol \theta}_{n_k}$ with $\P(\|\widehat{\boldsymbol \theta}_{n_k}-{\boldsymbol \theta}_0\|>c)>\varepsilon>0$ for all $k$. 
If necessary, further refine the subsequence so that $\P(f_{n_k}(\widehat{\boldsymbol \theta}_{n_k})<\frac{1}{k})>1-\frac{\varepsilon}{2}$. Let $\Omega^k_0 = \{\omega: \|\widehat{{\boldsymbol \theta}}_{n_k}-{\boldsymbol \theta}_0\|>c,\ \  f_{n_k}(\widehat{\boldsymbol \theta}_{n_k})<\frac{1}{k} \}$ and note that $\P(\Omega^k_0)\geq \varepsilon/2$ for every $k$.

Write $S=\{\bth\in \overline{U}_0:\|\bth-\bth_0\|\geq c\}$. Define the real-valued function
$$
G({\boldsymbol \theta}) = \left\| \left(\E_{\bth}d^2(2^{j_1},0),\ldots,\E_{\bth}d^2(2^{j_m},0)\right)-\left(\E_{\bth_0}d^2(2^{j_1},0),\ldots,\E_{\bth_0}d^2(2^{j_m},0)\right) \right\|.
$$
Since $\bth\mapsto (\E_{\bth} d^2(2^{j_1},0), \ldots,\E_{\bth} d^2(2^{j_m},0))$ is continuous, then so is $G({\boldsymbol \theta})$. Therefore, since $S$ is compact, $\inf_{\bth\in S} G({\boldsymbol \theta})$ attained at some $\bth' \in S$.  However, at $\bth'$, by the identifiability assumption \eqref{e:local_identifiability}, $ G({\boldsymbol \theta}') > 0$. In other words, for some $c' > 0$,
$$
\inf_{\bth\in S} G({\boldsymbol \theta})>c'.
$$
In particular, for every $\bth\in S$ there is some $j^*=j^*(\bth)\in \{j_1,\ldots,j_m\}$ such that \begin{equation}\label{e:|eta_j*(theta)-eta_j*(theta0)|>delta}
|\eta_{j^*}(\bth)-\eta_{j^*}(\bth_0)| = \left|\E_{\bth}d^2(2^{j^*},0) - \E_{\bth_0}d^2(2^{j^*},0)\right|>\delta:=\frac{c'}{\sqrt{m}}.
\end{equation}
Thus, for every $k$ and every $\omega\in\Omega^k_0$, we have $|\eta_{j^*}(\widehat{\boldsymbol \theta}_{n_k})-\eta_{j^*}({\boldsymbol \theta}_0)|>\delta$.

Furthermore, for $\delta > 0$ as in \eqref{e:|eta_j*(theta)-eta_j*(theta0)|>delta}, Lemma \ref{l:ANwspec} implies that there is some $N$ such that, for $n_k>N$, $\P(\bigcap^{j_m}_{j=j_1}\{|\log_2W(2^j)- \eta_j({\boldsymbol \theta}_0)|<\delta/2\})>1-\frac{\varepsilon}{4}$. So, let $\Omega^k_1 = \Omega^k_0\cap \bigcap^{j_m}_{j=j_1}\{|\log_2W(2^j)- \eta_j({\boldsymbol \theta}_0)|<\delta/2\}$, and note that $\P(\Omega^k_1)\geq \frac{\varepsilon}{4}$ when $n_k>N$, i.e., when $k$ is sufficiently large. Let $w^* = \min_{j\in\{j_1,\ldots,j_m\}}w_j$. For all $k$ and every $\omega\in\Omega^k_1$,
$$
\frac{1}{k}  >\sum_{j={j_1}}^{j_m} w_j\left(\log_2W(2^j)-\eta_j({\boldsymbol \theta}_{n_k})\right)^2 = \sum_{j={j_1}}^{j_m} w_j\left(\log_2W(2^j)- \eta_j({\boldsymbol \theta}_0)+ \eta_j({\boldsymbol \theta}_0)-\eta_j({\boldsymbol \theta}_{n_k})\right)^2
$$
$$
\geq w^*\left(\log_2W(2^{j^*})- \eta_{j^*}({\boldsymbol \theta}_0)+ \eta_{j^*}({\boldsymbol \theta}_0)-\eta_{j^*}({\boldsymbol \theta}_{n_k})\right)^2
$$
$$
 \geq w^*(\delta/2 - |\eta_{j^*}({\boldsymbol \theta}_0)-\eta_{j^*}({\boldsymbol \theta}_{n_k})|)^2\geq w^*( \delta/2)^2.
$$
By taking the limit $k \rightarrow \infty$, we arrive at a contradiction. Hence, $\widehat{{\boldsymbol \theta}}_n \stackrel{P}\rightarrow {\boldsymbol \theta}_0$. This shows \eqref{e:consistency}.\\

We proceed to establish (\emph{ii}).  For notational simplicity, write $\bth=(\bth_1,\bth_2,\bth_3)=(H,\lambda,\sigma^2)$, $D = \frac{d}{d\bth},$ and $\partial_\ell=\frac{\partial}{\partial\bth_\ell}$, $\ell=1,2,3$. Recall that $f_n$ denotes the objective function \eqref{e:def_estimator}. By Lemma \ref{l:ANwspec}, for $i,\ell=1,2,3$,
$$
\partial_i\partial_\ell f_n(\bth) = 2\sum_{j=j_1}^{j_m}w_j\Big[ \left(\eta_j(\bth)-\log_2W(2^j)\right)\partial_i\partial_\ell \eta_j(\bth) + \partial_i \eta_j(\bth)\partial_\ell \eta_j(\bth)\Big]
$$
\begin{equation}\label{e:Hessian_conv}
\pto 2\sum_{j=j_1}^{j_m} \frac{w_j\partial_\ell \E_{\bth} d^2(2^j,0)\partial_i\E_{\bth} d^2(2^j,0) }{[\E_{\bth} d^2(2^j,0)]^2 \log^2(2)}, \quad i,\ell = 1,2,3, \quad n \rightarrow \infty.
\end{equation}
In view of \eqref{e:Hessian_conv}, as a consequence of assumption \eqref{e:det>0} there is some constant $c=c(\bth_0)$ such that
\begin{equation}\label{D2psd}
\P(\det D^2f_n(\bth_0)>c>0)\to 1, \quad n \rightarrow \infty.
\end{equation}
Furthermore, by considering a Taylor expansion of $D f_n(\bth)$ around $\bth_0$ and rearranging the terms,
\begin{equation}\label{e:Taylor}
 D^2 f_n({\boldsymbol \theta}_0)({\boldsymbol \theta}-{\boldsymbol \theta}_0) = Df_n({\boldsymbol \theta}) -Df_n({\boldsymbol \theta}_0) +o(\|{\boldsymbol \theta}-{\boldsymbol \theta}_0\|) \hspace{1mm}\in \bbR^3.
\end{equation}
At $\bth=\widehat{\bth}_n$, $Df_n(\widehat{\bth}_n)=0$ with probability going to 1 since $\Xi$ is an open set (see \eqref{e:theta0_in_intXi}). Now, by \eqref{D2psd}, with probability going to 1, the Hessian matrix $D^2f_n(\bth_0)$ is invertible, and $\|(D^2f_n(\bth_0))^{-1}\|<c'<\infty$.  Writing $g(\bth)$ for the remainder term in \eqref{e:Taylor}, for all $\omega\in \{\omega:\det D^2f_n(\bth_0)>c>0\}$, we have $\|(D^2f_n(\bth_0))^{-1}g(\widehat{\bth}_n)\|\leq c'\|g( \widehat{\bth}_n)\| = o_p(\|\widehat{\boldsymbol \theta}_n-{\boldsymbol \theta}_0\|)$, i.e., 
\begin{equation}\label{e:th3taylor}
\sqrt{n}(\widehat{\boldsymbol \theta}_n-{\boldsymbol \theta}_0)  = -\left[D^2f_n(\bth_0)\right]^{-1}\sqrt{n}Df_n({\boldsymbol \theta}_0) + o_p(\sqrt{n}\|\widehat{\boldsymbol \theta}_n-{\boldsymbol \theta}_0\|).
\end{equation}
Note that, for any fixed $j$, \eqref{e:B(j)=o(1/n)} implies that $\sqrt{n_j}B(j)\to 0$. Thus, by Lemma \ref{l:ANwspec} and Slutsky's theorem,
$$
\left(\sqrt{n_j}\left[\log_2 W(2^j)-\eta_j(\bth_0)\right]\right)_{j=j_1}^{j_m}
$$
$$
= \left(\sqrt{n_j}\left[\frac{\left(W(2^j)-\E d^2(2^j,0)\right)}{ \E d^2(2^j,0) \log 2} + o_p\left(W(2^j)-\E d^2(2^j,0)\right)\right]-\sqrt{n_j}B(j)\right)_{j=j_1,\ldots,j_m}
$$
$$
\stackrel{d}\to \mathcal{N}\left(0,\left(\frac{F_{j,j'}}{\E d^2(2^j,0)\E d^2(2^{j'},0)\log^2 2}\right)_{j,j'=j_1,\ldots, j_m}\right), \quad n \rightarrow \infty.
$$
Hence, by the Cram\'er-Wold device,
\begin{equation}\label{e:sqrt(n)Dfn_conv}
\sqrt{n}Df_n(\bth_0) = 2\sum_{j=j_1}^{j_m}\sqrt{n}\left(\eta_j(\bth_0)-\log_2 W(2^j)\right)w_jD\eta_{j}(\bth_0) \stackrel{d}\to \mathcal{N}(\mathbf{0},\Upsilon(\bth_0)), \quad n \rightarrow \infty.
\end{equation}
In \eqref{e:sqrt(n)Dfn_conv}, the asymptotic covariance matrix is given by
$$
\Upsilon(\bth_0) =\left(\sum_{j=j_1}^{j_m}\sum_{j'=j_1}^{j_m} \frac{2^{\frac{j+j'}{2}+2}w_jw_{j'}\partial_\ell\Edj\partial_i\Edjp}{[\Edj\Edjp]^2\log^4 2 }F_{j,j'}\right)_{i,\ell=1,2,3},
$$
where $F_{j,j'}$ is as in \eqref{e:Fjj'}. From \eqref{e:Hessian_conv} and the smoothness of $D^2f_n$, we may write
\begin{equation}\label{e:inv_second_deriv_conv}
\left[D^2f_n(\bth_0)\right]^{-1} \pto Q(\bth_0)^{-1},
\end{equation}
where
$$
Q(\bth)=\left(2\sum_{j=j_1}^{j_m} \frac{w_j\partial_\ell\E_{\bth} d^2(2^j,0)\partial_i\E_{\bth} d^2(2^j,0) }{[\E_{\bth} d^2(2^j,0) ]^2 \log^2(2)}\right)_{i,\ell =1,2,3}.
$$
Thus, by \eqref{e:sqrt(n)Dfn_conv} and taking limits in $n$ in \eqref{e:th3taylor}, expression \eqref{e:asympt_normality} holds, where
$$
\Sigma(\bth)= Q(\bth)^{-1}\Upsilon(\bth)(Q(\bth)^{-1})^{-\top}. \quad \Box
$$

\subsection{Hypothesis testing}

Throughout this section, recall that $d_{\text{fBm}}(2^j,k)$ and $d_{\text{tfBm}}(2^j,k)$ as in \eqref{e:dtfBm_dfBm} denote the (discrete time) wavelet coefficients of fBm and tfBm, respectively.

Let $\widetilde{H}(2^j,2^{j'})$ be the wavelet regression estimator defined in \eqref{eq:def_wavereg} and let $\{a(n)\}_{n\in\N}$ be a dyadic sequence of integers satisfying the growth condition \eqref{e:seq_decay_assumption}.
The asymptotic distribution of wavelet variances or wavelet-based estimators is discussed and established in many works (for instance, Veitch and Abry \cite{veitch:abry:1999}, Bardet \cite{bardet:2002}, Bardet et al.\ \cite{Bardet2003a}, Moulines et al.\ \cite{moulines:roueff:taqqu:2008}). In particular, for a pair of octaves $j_1,j_2$,
\begin{equation}\label{e:fbm_a_AN}
\sqrt{\frac{n}{a(n)}}\left(\widetilde{H}\left(a(n)2^{j_1},a(n)2^{j_2}\right)-H\right) \stackrel{d}\to \mathcal{N}(0,\gamma_0^2), \quad n \rightarrow \infty,
\end{equation}
where
\begin{equation}\label{e:gamma0}
\gamma_0^2 = \frac{1}{4\log^2 2}\sum_{j = j_1,j_2}\sum_{j'= j_1,j_2}\varpi_j\varpi_{j'}\frac{\widetilde{G}_{j,j'}}{\varrho_\infty(j,j,0)\varrho_\infty(j',j',0)},
\end{equation}
\begin{equation}\label{e:varrho_inf}
\varrho_{\infty}(j,j',r) :=\frac{\sigma^22^{(j+j')/2}}{C^2(H)}\int_{\R} \frac{\widehat{\psi}(2^jy)\overline{\widehat\psi(2^{j'}y)}}{|y|^{2H+1}}e^{iyr}dx
\end{equation}
(cf.\ expression \eqref{e:Cov(W(a2j)W(a2j'))_fbm}), and in \eqref{e:gamma0}, the matrix $\widetilde{G} = (\widetilde{G}_{j,j'})_{j,j'=j_1,j_2}$ is given by
\begin{equation}\label{e:Gtilde}
\widetilde{G}_{j,j'} = 2^{\min\{j,j'\}+1}\sum_{r\in\Z}
\varrho_\infty^2(j,j',r2^{\min\{j,j'\}}),
\end{equation}

 However, the asymptotic distribution of wavelet variances of fBm \textit{both} in discrete time and for fixed scales is not generally emphasized. Since we use such asymptotic distribution in Lemma \ref{Hest_AN}, for the reader's convenience we provide it in the following lemma.
\begin{lemma}\label{fBm_AN}
Let $j_1,\ldots,j_m$ be a set of octaves, and let $W(2^j)$ be the sample wavelet variance of a fBm with Hurst parameter $H$ and variance $\sigma^2$. Then, {under condition \eqref{e:Npsi>alpha}},
\begin{equation}\label{e:sqrt(n)(WfBm-EWfBm)->Normal}
\sqrt{n}\left(W(2^j)-\E d_{\textnormal{fBm}}^2(2^j,0)\right)_{j=j_1,\ldots, j_m}\stackrel{d}\to \mathcal{N}(0,G),
\end{equation}
as $n \rightarrow \infty$, where $G=(G_{j,j'})_{j,j'=j_1,\ldots,j_m}, G_{j,j'}=2^{\min\{j,j'\}+1}\sum_{r\in\Z} \varrho^2(j,j',r2^{\min\{j,j'\}})$, and
\begin{equation}\label{e:varrho(j,j',r)_def}
\varrho(j,j',r):= \frac{\sigma^2}{C^2(H)} \int^{\pi}_{-\pi} e^{ixr} \sum_{\ell \in \bbZ}  \frac{ {\mathcal H}_j(x)\overline{{\mathcal H}_{j'}(x)}}{|x + 2 \pi \ell |^{2H+1}}dx.
\end{equation}
\end{lemma}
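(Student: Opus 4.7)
The plan is to replicate the structure of the proof of Lemma \ref{l:ANwspec} in the fBm setting, which in turn relies on the Cram\'er--Wold device and the Lindeberg central limit theorem applied to Gaussian quadratic forms. Fix $\mathbf{a} = (a_1,\ldots,a_m)\in\R^m$ and set $U_n := \sum_{j=j_1}^{j_m}a_j W(2^j)$. Stacking the wavelet coefficients at the octaves $j_1,\ldots,j_m$ into a Gaussian vector $V_n$, one writes $U_n = V_n^\top D_n V_n$ for a suitable block-diagonal matrix $D_n$. Using the spectral decomposition of $\Gamma_n^{1/2}D_n\Gamma_n^{1/2}$, where $\Gamma_n = \E V_n V_n^\top$, the problem is reduced to verifying the Lindeberg-type condition $\lambda_{\max}(n) = o(\sqrt{\Var U_n})$ on the eigenvalues of that matrix.

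To handle the variance, I would apply the Isserlis identity to write
\begin{equation*}
\Cov(W(2^j),W(2^{j'})) = \frac{2}{n_jn_{j'}}\sum_{k=1}^{n_j}\sum_{k'=1}^{n_{j'}}\bigl(\E d_{\textnormal{fBm}}(2^j,k)d_{\textnormal{fBm}}(2^{j'},k')\bigr)^2,
\end{equation*}
noting that $\E d_{\textnormal{fBm}}(2^j,k)d_{\textnormal{fBm}}(2^{j'},k') = \varrho(j,j',2^jk-2^{j'}k')$ in the notation of \eqref{e:varrho(j,j',r)_def} (this is the fBm analogue of \eqref{e:discspec_j,j'}, with kernel $|x+2\pi\ell|^{-(2H+1)}$). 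The fBm decorrelation bound from Proposition \ref{p:tfbm_decorr}$(ii)$ gives $|\varrho(j,j',r)|\leq C|r|^{-(2N_\psi-2H)}$ for $|r|$ large; under condition \eqref{e:Npsi>alpha}, $2N_\psi-2H>2$, so $\sum_r \varrho^2(j,j',r)<\infty$. Hence Lemma~B.3 of Abry and Didier \cite{abry:didier:2018} applies verbatim as in \eqref{e:where_B3_is_used} to yield
\begin{equation*}
n\,\Cov(W(2^j),W(2^{j'})) \to 2^{\min\{j,j'\}+1}\sum_{r\in\Z}\varrho^2(j,j',r2^{\min\{j,j'\}}) = G_{j,j'}
\end{equation*}
and therefore $n\Var U_n \to \sum_{j,j'}a_ja_{j'}G_{j,j'}$, identifying the candidate limit covariance.

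For the Lindeberg condition, I would control $\lambda_{\max}(n)$ by the operator norm of $\Gamma_n$ times $\max_k|(D_n)_{kk}|\asymp n^{-1}$, and bound the former via its infinity norm
\begin{equation*}
\sup_{\|u\|=1}u^\top\Gamma_n u \leq m\max_{j,j'}\max_{k}\sum_{k'}|\E d_{\textnormal{fBm}}(2^j,k)d_{\textnormal{fBm}}(2^{j'},k')|.
\end{equation*}
Separating off the finitely many lags $|2^jk-2^{j'}k'|$ that fall below the threshold of Proposition \ref{p:tfbm_decorr} (bounded by Cauchy--Schwarz and the boundedness of $\E d_{\textnormal{fBm}}^2(2^j,0)$ on a fixed, finite octave range) and using $\sum_{r\neq 0}|r|^{-(2N_\psi-2H)}<\infty$ on the remainder yields a uniform bound. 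This produces $\lambda_{\max}(n)=O(n^{-1})=o(n^{-1/2})=o(\sqrt{\Var U_n})$, so the Lindeberg CLT applies and Cram\'er--Wold delivers \eqref{e:sqrt(n)(WfBm-EWfBm)->Normal}.

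The principal obstacle, as in the tfBm case, is guaranteeing both the absolute convergence of lag sums and the square-summability that underlies Lemma~B.3. In the tfBm setting this was free from the quasi-exponential decay \eqref{e:approx_decorrelation}, whereas here only the polynomial fBm bound \eqref{e:fbm_decorr} is available; this is precisely where the hypothesis $N_\psi\geq 2$ in \eqref{e:Npsi>alpha} is invoked, as it forces the decay exponent $2N_\psi-2H$ to exceed $2$ uniformly for $H\in(0,1)$, making both sums converge and closing the argument.
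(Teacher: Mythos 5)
Your proposal is correct and follows essentially the same route as the paper's proof: reduce to the argument of Lemma \ref{l:ANwspec} via Cram\'er--Wold and the Lindeberg condition on the Gaussian quadratic form, identify $G_{j,j'}$ through the Isserlis identity and the lag-sum limit (Lemma B.3 of Abry and Didier), and control $\lambda_{\max}(n)$ by splitting off finitely many small lags and summing the polynomial bound \eqref{e:fbm_decorr}, with \eqref{e:Npsi>alpha} guaranteeing $2N_\psi-2H>1$ so both the absolute and square lag sums converge. No gaps to report.
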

\begin{proof}
Following the proof of Lemma \ref{l:ANwspec}, we need to establish the expression of the matrix $G$, and control the quantity
\begin{equation}\label{e:fbm_eigbound}
m \max_{j,j'=j_1,\ldots,j_m}\max_{k=1,\ldots,n_j} \sum_{k'=1}^{n_{j'}} | \E d_{\textnormal{fBm}}(2^j,k)d_{\textnormal{fBm}}(2^{j'},k')|.
\end{equation}

Note that, by \eqref{e:fbm_decorr}, for any fixed $j,j'$,
$$
|\E d_{\text{fBm}}(2^j,k)d_{\text{fBm}}(2^{j'},k')| = \mathcal{O}\left(\frac{1}{|2^j k - 2^{j'}k'|^{2N_{\psi}-2H}}\right), \quad |2^j k - 2^{j'}k'|\to\infty
$$
Write $\varrho(j,j',2^jk-2^{j'}k')=\E d_{\text{fBm}}(2^j,k)d_{\text{fBm}}(2^{j'},k')$. Then, $\sum_{r}\varrho^2(j,j',r)<\infty$. So, by the proof of Lemma \ref{l:ANwspec}
$$
n\Cov( W(2^j),W(2^{j'})) \to 2^{\min\{j,j'\}+1}\sum_{r\in\Z} \varrho^2(j,j',r2^{\min\{j,j'\}}).
$$
Writing $j_*=\max\{j_1,\ldots,j_m\}$, we have
$$
|\E d_{\textnormal{fBm}}(2^j,k)d_{\textnormal{fBm}}(2^{j'},k')| \leq |\E d_{\textnormal{fBm}}^2(2^j,0)\E d_{\textnormal{fBm}}^2(2^{j'},0)|^{1/2} \leq \E d_{\textnormal{fBm}}^2(2^{j_{*}},0).
$$
Similarly as in the proof of Lemma \ref{l:ANwspec}, for large $n$ all but finitely many of the terms in the sum \eqref{e:fbm_eigbound} satisfy the bound \eqref{e:fbm_decorr}. Thus, {by \eqref{e:Npsi>alpha}}, for some constant $C$ expression \eqref{e:fbm_eigbound} is bounded by
$$
m \max_{j,j'=j_1,\ldots,j_m}\max_{k=1,\ldots,n_j} \sum_{k'=1}^{n_{j'}} | \E d_{\text{fBm}}(2^j,k)d_{\text{fBm}}(2^{j'},k')|
\leq C \sum_{r\neq 0} \frac{1}{|r|^{2N_{\psi}-2H}}<\infty.
$$
Hence, the limit in distribution \eqref{e:sqrt(n)(WfBm-EWfBm)->Normal} holds. $\Box$\\
\end{proof}

The following lemma establishes a bound, uniform in $k,k'$, on the correlation between wavelet coefficients when one of the octaves grows indefinitely.
\begin{lemma} \label{single_scale_limit_decorr}
Let $k,k',j,j'$ be fixed, and let $\{a(n)\}_{n\in\N}$ be a dyadic sequence as in \eqref{e:seq_decay_assumption}. Then,
\begin{equation}\label{e:EdtfBm(2^j,k)dtfBm(2^{j'},k')}
|\E d_{\textnormal{tfBm}}(2^j,k)d_{\textnormal{tfBm}}(a(n)2^{j'},k')| = o\Big(\frac{1}{\sqrt{a(n)}}\Big),\quad n\to\infty
\end{equation}
Moreover, under condition \eqref{e:Npsi>alpha} and $\alpha$ as in \eqref{e:psihat_is_slower_than_a_power_function}, we have
\begin{equation}\label{e:EdfBm(2^j,k)dfBm(2^{j'},k')}
|\E d_{\textnormal{fBm}}(2^j,k)d_{\textnormal{fBm}}(a(n)2^{j'},k')| \leq
 \mathcal{O}\left(a(n)^{2H+\frac{1}{2}-N_\psi}\right) + o\Big(\frac{1}{\sqrt{a(n)}}\Big),\quad n\to\infty
\end{equation}
In \eqref{e:EdtfBm(2^j,k)dtfBm(2^{j'},k')} and \eqref{e:EdfBm(2^j,k)dfBm(2^{j'},k')}, the residual terms $\mathcal{O}(\cdot)$, $o(\cdot)$ do not depend on $k,k'$.
\end{lemma}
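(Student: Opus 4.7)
The plan is to express each covariance as a single integral on $[-\pi,\pi]$ using the Fourier-type representations already at our disposal, and then to exploit the combination of (a) the vanishing-moment gain $|\mathcal{H}_j(x)| \leq C|x|^{N_\psi}$ near the origin, which holds for fixed $j$ because $\widehat{\psi}$ has a zero of order $N_\psi$ at $0$, and (b) the concentration of $\mathcal{H}_{J(n)}(x)$ around $x=0$ as $J(n) := \log_2 a(n) + j' \to \infty$. Taking absolute values inside the integral at the very first step immediately removes the phase $e^{ix(2^jk - a(n)2^{j'}k')}$, which delivers the uniformity in $k,k'$ claimed in the lemma.

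Concretely, for the tfBm statement I would start from \eqref{e:discspec_j,j'} with $j'$ replaced by $J(n)$ and bound $|\E d_{\textnormal{tfBm}}(2^j,k)\, d_{\textnormal{tfBm}}(a(n)2^{j'},k')|$ by a constant times $\int_{-\pi}^{\pi}|\mathcal{H}_j(x)||\mathcal{H}_{J(n)}(x)| F_\lambda(x)\,dx$, where $F_\lambda(x) = \sum_{\ell \in \bbZ}|\lambda^2+(x+2\pi\ell)^2|^{-(H+1/2)}$ is bounded and continuous on $[-\pi,\pi]$ because $\lambda>0$. For the fBm statement, the analogous bound follows from \eqref{e:varrho(j,j',r)_def}, but now with the singular weight $F_0(x)=\sum_{\ell}|x+2\pi\ell|^{-(2H+1)}$ in place of $F_\lambda$. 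In both cases, I would split the integration at some small $\delta>0$. Over the annulus $\delta \leq |x| \leq \pi$, the calculation already carried out in the proof of Proposition \ref{p:wavelet_spectrum} (see \eqref{e:Hj_to_dirac_1} and its aftermath) quantifies $\int_{\delta \leq |x| \leq \pi}|\mathcal{H}_{J(n)}(x)|^2\,dx = O(a(n)^{-(2\alpha-1)})$, so Cauchy--Schwarz against the (locally) bounded weight contributes $O(a(n)^{-(\alpha-1/2)}) = o(a(n)^{-1/2})$ since $\alpha > 1$.

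The delicate piece is the inner region $|x|<\delta$. Here I would use \eqref{e:Hj_expanded_pois_formula} together with the Taylor expansion of $\widehat{\psi}$ at $0$ to dominate $|\mathcal{H}_{J(n)}(x)|$ by $C\,2^{J(n)/2}|\widehat{\psi}(2^{J(n)}x)|$ modulo a remainder controlled exactly as in the proof of Proposition \ref{p:wavelet_spectrum}, and then perform the change of variables $y=2^{J(n)}x$ to extract the power of $a(n)$. For tfBm, the boundedness of $F_\lambda$ together with \eqref{e:psihat_is_slower_than_a_power_function} yields an inner contribution of order $a(n)^{1/2-\alpha}=o(a(n)^{-1/2})$, proving \eqref{e:EdtfBm(2^j,k)dtfBm(2^{j'},k')}. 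For fBm, the singular weight contributes an extra factor $|x|^{-(2H+1)}$ and a careful counting of powers under the change of variables produces the rate $a(n)^{2H+1/2-N_\psi}$ of \eqref{e:EdfBm(2^j,k)dfBm(2^{j'},k')}. The main obstacle is precisely this fBm inner estimate: three competing factors --- the vanishing-moment gain $|x|^{N_\psi}$, the tail decay $(1+|2^{J(n)}x|)^{-\alpha}$ from \eqref{e:psihat_is_slower_than_a_power_function}, and the singular weight $|x|^{-(2H+1)}$ --- must be balanced so that the rescaled $y$-integral converges and the stated exponent comes out; the hypothesis $N_\psi \geq 2 > 2H$ from \eqref{e:Npsi>alpha} is exactly what makes this work, and a final routine check ensures that all error terms from the approximation of $\mathcal{H}_{J(n)}$ and the truncation of the $\ell$-sum are of the same or smaller order and remain uniform in $k,k'$.
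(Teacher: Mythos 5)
Your proposal is correct, and for the fBm bound \eqref{e:EdfBm(2^j,k)dfBm(2^{j'},k')} it is essentially the paper's argument: drop the phase to get uniformity in $k,k'$, apply the bound \eqref{e:MoulinesetalJTSA_rel78} to both $\mathcal{H}_j$ and $\mathcal{H}_{j'_n}$, rescale by $y=2^{j'_n}x$, and separate the singular $\ell=0$ part of the spectral weight from the bounded $\ell\neq 0$ part (your splitting of the $x$-domain at $\delta$ is equivalent bookkeeping to the paper's splitting of the weight and of the $y$-range at $1$). For the tfBm bound the routes differ mildly: the paper uses the Poisson-type formula \eqref{e:Hj_expanded_pois_formula} to rewrite the covariance as an integral over all of $\R$, rescales, and concludes by dominated convergence from $\mathcal{H}_j(0)=0$, a soft argument giving $o(a(n)^{-1/2})$ with no rate, whereas your inner/outer split with Cauchy--Schwarz against the $L^2$ concentration estimate from Proposition \ref{p:wavelet_spectrum} and the quantitative bound $|\mathcal{H}_j(x)|\leq C|x|^{N_\psi}$ yields the explicit (and stronger) rate $O(a(n)^{1/2-\alpha})$; both are valid, and you could shorten your inner estimate by using \eqref{e:MoulinesetalJTSA_rel78} directly on $\mathcal{H}_{j'_n}$ instead of approximating it by $2^{j'_n/2}\widehat{\psi}(2^{j'_n}\cdot)$ plus a remainder. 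One small imprecision: what makes the rescaled fBm inner integral converge at the origin is only $N_\psi>H$, which is automatic, not $N_\psi\geq 2$; and when $N_\psi\geq\alpha+2H$ that integral in fact diverges at infinity, but the resulting power is $a(n)^{1/2-\alpha}=o(a(n)^{-1/2})$ (with a logarithmic factor in the boundary case), which is exactly the case analysis the paper carries out in \eqref{e:EdfBmdfBm_bound_1st_term_b}--\eqref{e:EdfBmdfBm_bound_1st_term_overall_bound}, so your "routine check" does close, just not for the reason you cite.
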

\begin{proof}
We first show \eqref{e:EdtfBm(2^j,k)dtfBm(2^{j'},k')}. Write $f(x) = \sum_{\ell \in\Z}\sigma^2\Gamma^2\left( H+\frac{1}{2} \right)|\lambda^2 + (x + 2 \pi \ell)^2|^{-(H+\frac{1}{2})}$, and let
$$
{j'_n}=j'+\log_2 a(n).
$$
Then,
$$
\E d_{\textnormal{tfBm}}(2^j,k)d_{\textnormal{tfBm}}(a(n)2^{j'},k') = \int_{-\pi}^\pi e^{ix(2^jk-a(n)2^{j'}k')}\mathcal{H}_j(x)\overline{\mathcal{H}_{{j'_n}}(x)}f(x)dx.
$$
Consider $\mathcal{H}_{j_n'}(x)$ as in \eqref{e:Hj_expanded_pois_formula} with $j_n'$ in place of $j$. Thus, using a change of variable $y=x+2\pi k$,
$$
\E d_{\textnormal{tfBm}}(2^j,k)d_{\textnormal{tfBm}}(a(n)2^{j'},k') = 2^{j_n'/2}\int_\R e^{iy(2^jk-a(n)2^{j'}k')}\mathcal{H}_j(y)\overline{\widehat{{\phi}}(y)} \widehat{\psi}(2^{{j'_n}}y)f(y)dx.
$$
After a change of variable $v=2^{j'_n}y$, and noting by Fourier inversion $|\widehat{\phi}(x)| \leq  \|\phi\|_{L^1}$, a.e., we obtain
$$
2^{j_n'/2}|\E d_{\textnormal{tfBm}}(2^j,k)d_{\textnormal{tfBm}}(a(n)2^{j'},k')| \leq
\int_{\R} \left|\mathcal{H}_j(2^{-j'_n}v)\widehat{\phi}(2^{-j'_n}v) \widehat{\psi}(v)f(2^{-j'_n}v)\right|dv
$$
$$
\leq  \|\widehat{\phi}\|_{\infty}\|f\|_{\infty}\int_{\R} |\mathcal{H}_j(2^{-j_n'}v) \widehat{\psi}(v)|dv\to 0,\quad n\to \infty,
$$
by the dominated convergence theorem, since \eqref{eq:vanish} implies
$$\mathcal{H}_{j}(0)=\sum_{\ell\in\Z}h_{j,\ell}=2^{-j/2}\int_\mathbb{R} \psi(2^{-j}t)\sum_{\ell\in \bbZ}\phi(t+\ell) dt=0.$$
Noting that $2^{j'_n/2}\sim a^{1/2}(n)$, the conclusion follows.

We now turn to \eqref{e:EdfBm(2^j,k)dfBm(2^{j'},k')}. Write $g(x) = \sum_{\ell\in\Z}\frac{\sigma^2/C^2(H)}{|x+2\pi \ell|^{2H+1}}$.  Then,
$$
|\E d_{\textnormal{fBm}}(2^j,k)d_{\textnormal{fBm}}(a(n)2^{j'},k')| = \left|\int_{-\pi}^\pi e^{ix(2^jk-a(n)2^{j'}k')}\mathcal{H}_j(x)\overline{\mathcal{H}_{{j'_n}}(x)}g(x)dx\right|
$$
$$
= \left|\int_{-\pi}^\pi e^{i2^{j'_n}x((2^j/a(n)2^{j'})k-k')}\mathcal{H}_j(x)\overline{\mathcal{H}_{{j'_n}}(x)}g(x)dx\right|
$$
$$
= \left|\int_{-\pi 2^{j'_n}}^{\pi 2^{j'_n}} e^{iy((2^j/2^{j'_n})k-k')}\mathcal{H}_j\Big(\frac{y}{2^{j'_n}}\Big)\overline{\mathcal{H}_{{j'_n}}\Big(\frac{y}{2^{j'_n}}\Big)}
g\Big(\frac{y}{2^{j'_n}}\Big)\frac{dy}{2^{j'_n}}\right|
$$
$$
\leq 2^{-j'_n}\int_{-\pi 2^{j'_n}}^{\pi 2^{j'_n}}\left|\mathcal{H}_j\Big(\frac{y}{2^{j'_n}}\Big)\overline{\mathcal{H}_{{j'_n}}\Big(\frac{y}{2^{j'_n}}\Big)}
g\Big(\frac{y}{2^{j'_n}}\Big)\right| dy
$$
$$
\leq 2^{j'_n 2H}\int_{-\pi 2^{j'_n}}^{\pi 2^{j'_n}}\left|\mathcal{H}_j\Big(\frac{y}{2^{j'_n}}\Big)\overline{\mathcal{H}_{{j'_n}}\Big(\frac{y}{2^{j'_n}}\Big)}
\frac{\sigma^2/C^2(H)}{|y|^{2H+1}}\right| dy
$$
\begin{equation}\label{e:EdfBmdfBm_bound}
+ 2^{-j'_n}\int_{-\pi 2^{j'_n}}^{\pi 2^{j'_n}}\Big|\mathcal{H}_j\Big(\frac{y}{2^{j'_n}}\Big)\overline{\mathcal{H}_{{j'_n}}\Big(\frac{y}{2^{j'_n}}\Big)}\Big|
\sum_{\ell \neq 0}\frac{\sigma^2/C^2(H)}{|(y/2^{j'_n})+2\pi \ell|^{2H+1}} dy,
\end{equation}
where we made the change of variable $y = 2^{j'_n}x$. Recast the first term in the bound \eqref{e:EdfBmdfBm_bound} as
\begin{equation}\label{e:EdfBmdfBm_bound_1st_term}
2^{j'_n 2H}\Big\{\int_{|y| \leq 1} + \int_{1 < |y| \leq \pi 2^{j'_n}} \Big\}\left|\mathcal{H}_j\Big(\frac{y}{2^{j'_n}}\Big)\overline{\mathcal{H}_{{j'_n}}\Big(\frac{y}{2^{j'_n}}\Big)}
\frac{\sigma^2/C^2(H)}{|y|^{2H+1}}\right| dy.
\end{equation}
By Moulines et al.\ \cite{moulines:roueff:taqqu:2007:JTSA}, relation (78),
\begin{equation}\label{e:MoulinesetalJTSA_rel78}
|{\mathcal H}_{j}(x)| \leq C 2^{j/2} \frac{|2^j x|^{N_{\psi}}}{(1 + 2^j |x|)^{\alpha + N_{\psi}}}, \quad x \in [-\pi,\pi).
\end{equation}
Therefore, for $|y| \leq 2^{j'_n}\pi$,
$$
\Big|{\mathcal H}_{j}\Big(\frac{y}{2^{j'_n}}\Big){\mathcal H}_{j'_n}\Big(\frac{y}{2^{j'_n}}\Big)\Big|
$$
\begin{equation}\label{e:|Hj(y/2^j')Hj'(2/2^j')|_bound}
\leq C 2^{j'_n/2}\frac{|y/2^{j'_n}|^{N_{\psi}}}{(1 + 2^j|y/2^{j'_n}|)^{\alpha + N_{\psi}}}\frac{|2^{j'_n}y/2^{j'_n}|^{N_{\psi}}}{(1 + 2^{j'_n}|y/2^{j'_n}|)^{\alpha + N_{\psi}}}
\leq C \frac{(2^{j'_n})^{1/2}}{(2^{j'_n})^{N_{\psi}}}  \frac{y^{2N_{\psi}}}{(1 + |y|)^{\alpha + N_{\psi}}}.
\end{equation}
Thus, by \eqref{e:|Hj(y/2^j')Hj'(2/2^j')|_bound}, the first term in \eqref{e:EdfBmdfBm_bound_1st_term} is bounded by

\begin{equation}\label{e:EdfBmdfBm_bound_1st_term_a}
C \frac{(2^{j'_n})^{2H+1/2}}{(2^{j'_n})^{N_\psi}} \int^{1}_{-1} \frac{1}{|y|^{2H+1}} \frac{|y|^{2 N_{\psi}}}{(1+|y|)^{\alpha + N_{\psi}}}dy \leq C \frac{(2^{j'_n})^{2H+1/2}}{(2^{j'_n})^{N_{\psi}}}.
\end{equation}
Also, {by \eqref{e:|Hj(y/2^j')Hj'(2/2^j')|_bound}}, the second term in \eqref{e:EdfBmdfBm_bound_1st_term} is bounded by
\begin{equation}\label{e:EdfBmdfBm_bound_1st_term_b}
C \frac{(2^{j'_n})^{2H+1/2}}{(2^{j'_n})^{N_{\psi}}} \int^{2^{j'_n}\pi}_{1} \frac{y^{2N_{\psi}}}{(1 + |y|)^{\alpha + N_{\psi}}} \frac{1}{|y|^{2H+1}}dy$$$$
\leq C \frac{(2^{j'_n})^{2H+1/2}}{(2^{j'_n})^{N_{\psi}}}\times \begin{cases}
C' +  (2^{j'_n})^{N_{\psi}-\alpha -2H}, & N_{\psi}-\alpha -2H \neq 0;\\
C' +  \log 2^{j'_n} ,  & N_{\psi}-\alpha -2H  = 0.\\
\end{cases}
\end{equation}
By relations \eqref{e:EdfBmdfBm_bound_1st_term_a} and \eqref{e:EdfBmdfBm_bound_1st_term_b}, \eqref{e:EdfBmdfBm_bound_1st_term} is bounded by
\begin{equation}\label{e:EdfBmdfBm_bound_1st_term_overall_bound}
\begin{cases}
 {C}\left((2^{j'_n})^{\frac{1}{2}-\alpha} + (2^{j'_n})^{ 2H+\frac{1}{2}-N_\psi}\right), & N_{\psi}-\alpha -2H \neq 0;\\
   {C} \log (2^{j'_n}) \hspace{1mm} (2^{j'_n})^{ 2H+\frac{1}{2}-N_\psi}, & N_{\psi}-\alpha -2H = 0.
 \end{cases}
\end{equation}
On the other hand, $\sup_{|y| \leq \pi 2^{j'_n}}\sum_{\ell \neq 0} 1/|(y/2^{j'_n})+ 2 \pi \ell|^{2H+1} < \infty$. Let $\delta\in (0,\alpha-1)$. By \eqref{e:|Hj(y/2^j')Hj'(2/2^j')|_bound}, the second term in \eqref{e:EdfBmdfBm_bound} is bounded by
\begin{equation}\label{e:EdfBmdfBm_bound_2nd_term_overall_bound}
C\frac{2^{-j'_n}2^{j'_n/2}}{(2^{j'_n})^{N_{\psi}}}\int^{2^{j'_n}\pi}_{-2^{j'_n}\pi} \frac{|y|^{2N_\psi}}{(1 + |y|)^{\alpha + N_{\psi}}}dy
\leq \frac{C(2^{j'_n})^{N_\psi-\delta}}{(2^{j'_n})^{N_{\psi}+1/2}}\int^{2^{j'_n }\pi}_{-2^{j'_n }\pi} \frac{|y|^{N_\psi+\delta}}{(1 + |y|)^{\alpha + N_{\psi}}}dy \leq \frac{C}{(2^{j'_n})^{\frac{1}{2}+\delta}}.
\end{equation}
Noting that
\begin{equation}\label{e:1/2-alpha<-1/2}
\frac{1}{2}-\alpha<-\frac{1}{2},
\end{equation}
and that if $N_{\psi}-\alpha -2H = 0$,
$$
\log 2^{j'_n} \hspace{1mm} (2^{j'_n})^{ 2H+\frac{1}{2}-N_\psi} =  \log 2^{j'_n} \hspace{1mm} (2^{j'_n})^{ \frac{1}{2}-\alpha} = o(a(n)^{-\frac{1}{2}}),
$$
relations \eqref{e:EdfBmdfBm_bound_1st_term_overall_bound} and \eqref{e:EdfBmdfBm_bound_2nd_term_overall_bound} yield \eqref{e:EdfBm(2^j,k)dfBm(2^{j'},k')}. $\Box$\\
\end{proof}

The following lemma, which is needed in the proof of Lemma \ref{Hest_AN}, establishes a limit used to obtain the asymptotic covariance of the sample wavelet variances $W(a(n)2^j)$ for both tfBm and fBm when the scale factor $a(n)$ goes to infinity. {The result is classical for fBm, but since in this case the additional condition \eqref{e:Npsi>alpha} is needed, we provide the argument for the reader's convenience.}
\begin{lemma}\label{Cov_scale_to_infty}
Let $j,j'$ be fixed, and let $\{a(n)\}_{n\in\N}$ be a dyadic sequence as in \eqref{e:seq_decay_assumption}. Also let
$$
n_{a,j} = \frac{n}{a(n)2^j}
$$
and $n_{a,j'}$ be the number of wavelet coefficients at scales $a(n)2^j$ and $a(n)2^{j'}$, respectively. 
\begin{itemize}
\item[(i)] As $n\to\infty$,

\begin{equation}\label{e:Cov(W(a2j)W(a2j'))_tfbm}
\frac{a(n)}{n}\sum_{k=1}^{n_{a,j}}\sum_{k'=1}^{n_{a,j'}}\left(\E d_{\textnormal{tfBm}}(a(n)2^j,k)d_{\textnormal{tfBm}}(a(n)2^{j'},k')\right)^2\to 2^{\min\{j,j'\}-(j+j')}\sum_{r\in\mathcal \Z}\zeta_{\infty}^2(j,j',r 2^{\min\{j,j'\}}),
\end{equation}
where
\begin{equation}\label{e:zeta_inf}
\zeta_{\infty}(j,j',r)=\frac{\sigma^2\Gamma^2(H+\frac{1}{2})}{2\pi}\sum_{\ell\in\Z}\frac{2^{(j+j')/2}}{|\lambda^2+(2\pi\ell)^2|^{H+\frac{1}{2}}}\int_{\R} \widehat{\psi}(2^jy)\overline{\widehat\psi(2^{j'}y)}e^{ixr}dx
\end{equation}
\item[(ii)]  Under condition \eqref{e:Npsi>alpha}, as $n\to\infty$,

\begin{equation}\label{e:Cov(W(a2j)W(a2j'))_fbm}
\frac{a(n)}{n}\sum_{k=1}^{n_{a,j}}\sum_{k'=1}^{n_{a,j'}}\left(\frac{\E d_{\textnormal{fBm}}(a(n)2^j,k)d_{\textnormal{fBm}}(a(n)2^{j'},k')}{a(n)^{2H+1}}\right)^2\to 2^{\min\{j,j'\}-(j+j')}\sum_{r\in\mathcal \Z}\varrho^2_{\infty}(j,j',r 2^{\min\{j,j'\}}),
\end{equation}
where $\varrho_\infty$ is given by \eqref{e:varrho_inf}.
\end{itemize}
\end{lemma}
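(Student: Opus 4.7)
The plan is to reorganize the double sum over $(k,k')$ as a single sum over a shift variable, then pass the limit inside the sum via dominated convergence after showing pointwise convergence of the summand. Assume without loss of generality that $j\leq j'$, so $\min\{j,j'\}=j$, and write $A:=a(n)$, $j_n:=j+\log_2 A$, $j'_n:=j'+\log_2 A$. By \eqref{e:discspec_j,j'}, the tfBm wavelet covariance $\E d_{\textnormal{tfBm}}(A 2^j,k)d_{\textnormal{tfBm}}(A 2^{j'},k')$ depends only on the time lag $r = A(2^jk-2^{j'}k')$; denote it by $\zeta_{j_n,j'_n}(r)$, and analogously $\varrho_{j_n,j'_n}(r)$ for fBm.

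First, I would reindex the summation by $p := k - 2^{j'-j}k' \in \Z$, so that the lag becomes $A 2^j p$. A counting argument shows that, for fixed $p$ with $|p|\leq n_{a,j}$, there are $n_{a,j'} + O(|p|/2^{j'-j})$ pairs $(k,k')$ giving rise to this value of $p$. Since $A n_{a,j'}/n = 2^{-j'} = 2^{\min\{j,j'\}-(j+j')}$, the double sum on the LHS of \eqref{e:Cov(W(a2j)W(a2j'))_tfbm} reduces to $2^{-j'}\sum_p \zeta_{j_n,j'_n}^2(A 2^j p)$ plus a boundary error to be controlled using the decay of $\zeta_{j_n,j'_n}$.

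Second, I would establish the pointwise limit $\zeta_{j_n,j'_n}(A 2^j p)\to\zeta_\infty(j,j',p 2^j)$ for each fixed $p$ as $A\to\infty$. Starting from \eqref{e:discspec_j,j'} and applying the change of variable $y=Ax$, the factor $\mathcal{H}_{j_n}(y/A)\overline{\mathcal{H}_{j'_n}(y/A)}$ is, by the Dirac-approximation argument used in the proof of Proposition \ref{p:wavelet_spectrum}, asymptotic to $A\cdot 2^{(j+j')/2}\widehat\phi(y/A)\overline{\widehat\phi(y/A)}\widehat\psi(2^jy)\overline{\widehat\psi(2^{j'}y)}$, and $\widehat\phi(y/A)\to\widehat\phi(0)=1$. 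Meanwhile, $\sum_\ell |\lambda^2+(y/A+2\pi\ell)^2|^{-(H+1/2)}$ converges pointwise to $\sum_\ell|\lambda^2+(2\pi\ell)^2|^{-(H+1/2)}$. The bound \eqref{e:MoulinesetalJTSA_rel78} supplies a dominating function of the form $C|y|^{2N_\psi}(1+|y|)^{-2(\alpha+N_\psi)}$, so the dominated convergence theorem yields exactly $\zeta_\infty(j,j',p 2^j)$.

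Third, I would control the sum over $p$ via two observations: $(a)$ since $\psi$ is compactly supported, $r\mapsto\zeta_\infty(j,j',r)$ is compactly supported (being proportional to a convolution of dilates of $\psi$), so the limiting sum is finite; and $(b)$ the pre-limit tail sum over $|p|\geq 2T\, 2^{j'-j}$ is controlled by Proposition \ref{p:tfbm_decorr}$(i)$, which gives the exponential bound $|\zeta_{j_n,j'_n}(A 2^j p)|\lesssim A\cdot 2^{(j+j')/2}(A 2^j|p|)^{H-1/2}e^{-\lambda A 2^j |p|}$ and hence vanishes uniformly as $A\to\infty$. For the remaining finitely many $|p|< 2T\, 2^{j'-j}$, termwise convergence from the previous step suffices, and the boundary error from the reindexing is handled analogously. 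Part $(ii)$ proceeds along the same lines once $|\lambda^2+(x+2\pi\ell)^2|^{-(H+1/2)}$ is replaced by $|x+2\pi\ell|^{-(2H+1)}$; the $\ell=0$ singularity at $x=0$ supplies the extra factor $A^{2H+1}$ after the change $y=Ax$, which is precisely the normalization built into the LHS of \eqref{e:Cov(W(a2j)W(a2j'))_fbm}, while Proposition \ref{p:tfbm_decorr}$(ii)$ provides polynomial decay $|p|^{-(2N_\psi-2H)}$, summable under \eqref{e:Npsi>alpha} since $2N_\psi-2H>1$. The main obstacle I anticipate is in the fBm case: tracking the $A^{2H+1}$ scaling consistently through the singular $\ell=0$ contribution (which produces this scaling) while simultaneously controlling the regular $\ell\neq 0$ terms so that they vanish at rate $A^{-(2H+1)}$ in the normalized correlation, uniformly in $y$ over the enlarging domain $[-\pi A,\pi A]$.
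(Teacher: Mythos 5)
Your proposal is correct and follows essentially the same route as the paper's proof: the change of variable $y=a(n)x$ together with the Moulines et al.\ bounds on ${\mathcal H}_j$ (used there via the approximation ${\mathcal H}^{(0)}_j(x)=2^{j/2}\widehat\phi(x)\widehat\psi(2^jx)$) yields the pointwise limit $\zeta(j_n,j'_n,a(n)r)\to\zeta_\infty(j,j',r)$ by dominated convergence, and the double sum is reduced to a lag sum with multiplicity asymptotic to $2^{\min\{j,j'\}}\nu_n$, with the interchange of limit and sum justified by the decorrelation bounds of Proposition \ref{p:tfbm_decorr}, exactly as you outline (your compact-support observation for $\zeta_\infty$ and explicit counting replace the paper's $\ell^1$ dominating sequence and its citation of the counting lemma, which is cosmetic). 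The one obstacle you flag in part $(ii)$ is not a real one: since $2H+1>1$ we have $\sup_{|y|\le \pi a(n)}\sum_{\ell\neq 0}|y/a(n)+2\pi\ell|^{-(2H+1)}<\infty$, so after normalizing by $a(n)^{2H+1}$ the $\ell\neq 0$ contribution is $O(a(n)^{-(2H+1)})$ and vanishes, which is precisely how the paper disposes of it.
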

\begin{proof}
We start with $(i)$.  Write $\mathcal{H}_j^{(0)}(x)= 2^{j/2}\widehat\phi(x)\widehat\psi(2^jx)$.  Note
$$
\left|{\mathcal H}_{j}(x)\overline{{\mathcal H}_{j'}(x)} -\mathcal{H}_j^{(0)}(x)\overline{\mathcal{H}_{j'}^{(0)}(x)}\right|
$$
\begin{equation}\label{e:Hj-Hj0_1}
\leq \left|{\mathcal H}_{j}(x)\right|\left| \mathcal{H}_{j'}(x)-\mathcal{H}_{j'}^{(0)}(x)\right| + \left|{\mathcal H}_{j'}^{(0)}(x)\right|\left| \mathcal{H}_{j}(x)-\mathcal{H}_{j}^{(0)}(x)\right|.
\end{equation}
By Moulines et al.\ \cite{moulines:roueff:taqqu:2007:JTSA}, Proposition 3, for all $x\in[-\pi,\pi)$,
$$
|\mathcal{H}_{j}(x) |\leq C2^{j/2} \frac{|2^jx|^{N_\psi}}{(1+2^j|x|)^{N_\psi+\alpha}},\quad |\mathcal{H}^{(0)}_{j}(x) |\leq C'2^{j/2} \frac{|2^jx|^{N_\psi}}{(1+2^j|x|)^{N_\psi+\alpha}},
$$
and
$$
\left| \mathcal{H}_{j}(x)-\mathcal{H}_{j}^{(0)}(x)\right| \leq C'' 2^{j(\frac{1}{2}-\alpha)}|x|^{N_\psi}.
$$
Thus, \eqref{e:Hj-Hj0_1} is bounded by
\begin{equation}\label{e:Hj-Hj0_bound1}
\widetilde C\frac{2^{j/2} |2^jx|^{N_\psi}}{(1+2^j|x|)^{N_\psi+\alpha}}2^{j'(\frac{1}{2}-\alpha)}|x|^{N_\psi} + \widetilde C'\frac{2^{j'/2} |2^{j'}x|^{N_\psi}}{(1+2^{j'}|x|)^{N_\psi+\alpha}}2^{j(\frac{1}{2}-\alpha)}|x|^{N_\psi} =: M_{j,j'}(x).
\end{equation}
Consider the function $\zeta(j,j',r)$ as in \eqref{e:zeta_def}, i.e.,
\begin{equation}\label{e:zeta(j,j',r)}
\zeta(j,j',r) = \bbE d_{\textnormal{tfBm}}(2^j,k)d_{\textnormal{tfBm}}(2^{j'},k'), \quad r = 2^{j}k - 2^{j'}k'.
\end{equation}
Write $f(x) = \sum_{\ell\in\Z}\frac{(2\pi)^{-1}\sigma^2\Gamma^2(H+\frac{1}{2})}{|\lambda^2+(x+2\pi\ell)^2|^{H+\frac{1}{2}}}$, $j_n = \log_2(a(n)2^j)$, $j'_n = \log_2(a(n)2^{j'})$. Then,
$$
\E d(a(n)2^j,k)d(a(n)2^{j'},k')  = \int_{-\pi}^\pi {\mathcal H}_{j_n}(x)\overline{{\mathcal H}_{j'_n}(x)}e^{ixa(n)(2^jk-2^{j'}k')}f(x)dx
$$
$$
= \zeta\left(j_n,j'_n,a(n)(2^jk-2^{j'}k')\right).
$$
By \eqref{e:Hj-Hj0_1} and \eqref{e:Hj-Hj0_bound1},
$$
\left| \int_{-\pi}^\pi {\mathcal H}_{j_n}(x)\overline{{\mathcal H}_{j'_n}(x)}e^{ixa(n)(2^jk-2^{j'}k')}f(x)dx -  \int_{-\pi}^\pi {\mathcal H}^{(0)}_{j_n}(x)\overline{{\mathcal H}^{(0)}_{j'_n}(x)}e^{ixa(n)(2^jk-2^{j'}k')}f(x)dx \right|
$$
$$
\leq\int_{-\pi}^\pi  \left|{\mathcal H}_{j}(x)\overline{{\mathcal H}_{j'}(x)} -\mathcal{H}_j^{(0)}(x)\overline{\mathcal{H}_{j'}^{(0)}(x)}\right|f(x)dx
$$
\begin{equation}\label{e:H-H^0_lim}
\leq \int_{-\pi}^\pi M_{j_n,j'_n}(x) f(x)dx
\to 0,\qquad n\to\infty.
\end{equation}
In view of \eqref{e:1/2-alpha<-1/2}, the limit in \eqref{e:H-H^0_lim} holds by dominated convergence, since the righthand integrand is bounded by
$$
\|f\|_\infty \left(\widetilde C\frac{2^{j_n/2} |2^{j_n}x|^{N_\psi}}{(2^{j_n}|x|)^{N_\psi+\alpha}}2^{j'_n(\frac{1}{2}-\alpha)}|x|^{N_\psi} + \widetilde C'\frac{2^{j'_n/2} |2^{j'_n}x|^{N_\psi}}{(2^{j'_n}|x|)^{N_\psi+\alpha}}2^{j_n(\frac{1}{2}-\alpha)}|x|^{N_\psi} \right)
$$
$$
=\|f\|_\infty \left(\widetilde C''2^{(j_n+j'_n)(\frac{1}{2}-\alpha)}|x|^{N_\psi-\alpha} \right)
$$
$$
={\|f\|_\infty }\left(\widetilde C''a(n)^{1-2\alpha}2^{(j+j')(\frac{1}{2}-\alpha)}|x|^{N_\psi-\alpha} \right) \leq \|f\|_\infty \left(\widetilde C''2^{(j+j')(\frac{1}{2}-\alpha)}|x|^{N_\psi-\alpha} \right).
$$
On the other hand, by a change of variable $y = a(n)x$,
 $$
  \int_{-\pi}^\pi {\mathcal H}^{(0)}_{j_n}(x)\overline{{\mathcal H}^{(0)}_{j'_n}(x)}e^{ixa(n)(2^jk-2^{j'}k')}f(x)dx
  $$$$=  2^{(j_n+j_n')/2}\int_{-\pi}^\pi |\widehat{\phi}(x)|^2 \widehat{\psi}(a(n)2^jx)\overline{\widehat\psi(a(n)2^{j'}x)}e^{ixa(n)(2^jk-2^{j'}k')}f(x)dx
  $$
  $$
  =2^{(j+j')/2}\int_{-a(n)\pi}^{a(n)\pi} \Big| \widehat{\phi}\Big(\frac{y}{a(n)}\Big) \Big|^2 \widehat{\psi}(2^jy)\overline{\widehat\psi(2^{j'}y)}e^{iy(2^jk-2^{j'}k')}f\Big(\frac{y}{a(n)}\Big)dy
  $$
\begin{equation}\label{e:H^0_lim}
  \to2^{(j+j')/2}\int_{\R} \widehat{\psi}(2^jy)\overline{\widehat\psi(2^{j'}y)}e^{iy(2^jk-2^{j'}k')}f(0)dx,\quad n\to\infty,
\end{equation}
   where the limit is a consequence of dominated convergence. Thus, combining \eqref{e:H-H^0_lim} and \eqref{e:H^0_lim}, for any $r=2^jk-2^{j'}k'$, we have
\begin{equation}\label{zeta_to_zeta_inf}
   \lim_{n\to\infty}\zeta(j_n,j'_n,a(n)r) = 2^{(j+j')/2}f(0)\int_{\R} \widehat{\psi}(2^jy)\overline{\widehat\psi(2^{j'}y)}e^{ixr}dx=\zeta_{\infty}(j,j',r).
\end{equation}
Now consider the sum
 $$
 \frac{a(n)}{n}\sum_{k=1}^{n_{a,j}}\sum_{k'=1}^{n_{a,j'}}\zeta^2(j_n,j'_n,a(n)(2^jk-2^{j'}k')).
 $$
Let $\nu_n = \frac{n}{a(n)2^{j+j'}}$, and note $\nu_n\to\infty$ as $n\to\infty$.  For a given $r\in\mathcal{R}:= 2^j\N-2^{j'}\N=2^{\min\{j,j'\}}\Z$, by Lemma B.2 in Abry and Didier, 
 the number $\xi(n)$ of pairs $(k,k')\in\Z^2$  with $2^jk-2^{j'}k'= r$ and
   $$
  1\leq k \leq {2^{j'}\nu_n},\qquad 1\leq k'\leq {2^{j}\nu_n}
    $$
  satisfies
    $$
    \lim_{n\to\infty}\frac{\xi(n)}{\nu_n} = \textnormal{gcd}(2^j,2^{j'}) = 2^{\min\{j,j'\}}.
  $$
Let $B(n)=\{r: 2^jk-2^{j'}k'=r,\quad   1\leq k \leq {2^{j'}\nu_n},\quad 1\leq k'\leq {2^{j}\nu_n}\}$.  Note that {$B(n)\uparrow \mathcal R$} as $n\to\infty$, and thus
$$
  \frac{a(n)}{n}\sum_{k=1}^{n_{a,j}}\sum_{k'=1}^{n_{a,j'}}\zeta^2(j_n,j'_n,a(n)(2^jk-2^{j'}k'))=\frac{a(n)}{n}\sum_{k=1}^{2^{j'}\nu_n}\sum_{k'=1}^{{2^{j}\nu_n}}\zeta^2(j_n,j'_n,a(n)(2^jk-2^{j'}k'))
 $$$$
  = \frac{1}{2^{j+j'}\nu_n}\sum_{r\in B(n)}  \xi(n) \zeta^2(j_n,j'_n,a(n)r) \to 2^{\min\{j,j'\}-(j+j')}\sum_{r\in\mathcal R}\zeta^2_{\infty}(j,j',r)
  $$
  \begin{equation}\label{e:2^minjj'sum_r_zeta^2infty}
  = 2^{\min\{j,j'\}-(j+j')}\sum_{r\in\Z}\zeta^2_{\infty}(j,j',r2^{\min\{j,j'\}}).
\end{equation}
In \eqref{e:2^minjj'sum_r_zeta^2infty}, the limit holds due to dominated convergence. In fact, consider $j$, $j'$ and $a(n)$ such that $a(n)r \geq 2T\max\{a(n)2^j,a(n)2^{j'}\}$. By Proposition \ref{p:tfbm_decorr} and in view of \eqref{e:zeta(j,j',r)},
$$
|\zeta(j_n,j'_n,a(n)r)| \leq C2^{\frac{j_n+j'_n}{2}}|a(n)r|^{H-1/2}e^{-\lambda|a(n)r|}= C2^{\frac{j+j'}{2}}a(n)|a(n)r|^{H-1/2}e^{-\lambda|a(n)r|}
$$
$$
= C' \frac{{a(n)^{H+\frac{1}{2}}|r|^{H-\frac{1}{2}}}}{e^{\lambda|a(n)r|}}\leq \frac{C'}{\lambda^2} a(n)^{H-\frac{3}{2}}r^{H-\frac{5}{2}} \leq C''r^{H-\frac{5}{2}}.
$$
Therefore, $|\zeta(j_n,j'_n,a(n)r)|^2 \leq C''^2r^{2H-5}\in \ell^1(\Z)$. Moreover, for every $n$,
$$
\#\{r: ra(n) < 2T\max\{a(n)2^j,a(n)2^{j'}\}\} =\#\{r: r < 2T\max\{2^j,2^{j'}\}\} \leq 4T\max\{2^j,2^{j'}\}+1.
$$
In other words, the sequence $|\zeta(j_n,j'_n,a(n)r)|^2$ is bounded by an absolutely summable sequence in $r$ that does not depend on $n$. Hence, the limit in \eqref{e:2^minjj'sum_r_zeta^2infty} holds, as claimed.\\

We now turn to $(ii)$. Consider $\varrho(j,j',r)$ as in \eqref{e:varrho(j,j',r)_def}, namely,
\begin{equation}\label{e:varrho(j,j',r)}
\varrho(j,j',r)= \frac{\sigma^2}{C^2(H)} \int^{\pi}_{-\pi} e^{ixr} \sum_{\ell \in \bbZ}  \frac{ {\mathcal H}_j(x)\overline{{\mathcal H}_{j'}(x)}}{|x + 2 \pi \ell |^{2H+1}}dx.
\end{equation}
For notational simplicity, let $g(x) = \sum_{\ell\in\Z}\frac{\sigma^2/C^2(H)}{|x+2\pi \ell|^{2H+1}}$.  We first show that
$$
\frac{1}{a(n)^{2H+1}}\Big| \int_{-\pi}^\pi {\mathcal H}_{j_n}(x)\overline{{\mathcal H}_{j'_n}(x)}e^{ixa(n)(2^jk-2^{j'}k')}g(x)dx
$$
\begin{equation}\label{e:H-H_0_1_fbm}
-  \int_{-\pi}^\pi {\mathcal H}^{(0)}_{j_n}(x)\overline{{\mathcal H}^{(0)}_{j'_n}(x)}e^{ixa(n)(2^jk-2^{j'}k')}g(x)dx \Big| \to 0, \quad n \rightarrow \infty.
\end{equation}
In fact, by \eqref{e:Hj-Hj0_1} and \eqref{e:Hj-Hj0_bound1},
$$
\frac{1}{a(n)^{2H+1}}\left| \int_{-\pi}^\pi \Big[{\mathcal H}_{j_n}(x)\overline{{\mathcal H}_{j'_n}(x)}- {\mathcal H}^{(0)}_{j_n}(x)\overline{{\mathcal H}^{(0)}_{j'_n}(x)}\Big] e^{ixa(n)(2^jk-2^{j'}k')}g(x)dx \right|
$$
$$
\leq \frac{1}{a(n)^{2H+1}}\int_{-\pi}^\pi M_{j_n,j'_n}(x)g(x)dx
$$
\begin{equation}\label{e:H0-H_fbm_M_jj'_terms}
= \frac{1}{a(n)^{2H+1}}\int_{-\pi}^\pi M_{j_n,j'_n}(x) \sum_{\ell\neq 0}\frac{\sigma^2/C^2(H)}{|x+2\pi \ell|^{2H+1}}dx + \frac{1}{a(n)^{2H+1}}\int_{-\pi}^\pi M_{j_n,j'_n}(x)\frac{\sigma^2/C^2(H)}{|x|^{2H+1}}dx.
\end{equation}
Since the function $\sum_{\ell\neq 0}\frac{\sigma^2/C^2(H)}{|x+2\pi \ell|^{2H+1}}dx$ in the integrand of the first term of \eqref{e:H0-H_fbm_M_jj'_terms} is bounded, by following a similar argument as in part $(i)$ the first term in \eqref{e:H0-H_fbm_M_jj'_terms} goes to 0.  For the second term, note that, by a change of variable $y = a(n)x$,
$$
 \frac{1}{a(n)^{2H+1}}\int_{-\pi}^\pi M_{j_n,j'_n}(x)\frac{\sigma^2/C^2(H)}{|x|^{2H+1}}dx =  C \frac{1}{a(n)}\int_{-a(n)\pi}^{a(n)\pi} M_{j_n,j'_n}\left(\frac{y}{a(n)}\right)\frac{1}{|y|^{2H+1}}dy
$$
$$
=\frac{1}{a(n)} \int_{-a(n)\pi}^{a(n)\pi}\bigg(\widetilde C\frac{2^{j_n/2} |2^{j_n}y/a(n)|^{N_\psi}}{(1+2^{j_n}|y/a(n)|)^{N_\psi+\alpha}}2^{j_n'(\frac{1}{2}-\alpha)}\left|\frac{y}{a(n)}\right|^{N_\psi}$$$$ +\  \widetilde C'\frac{2^{j_n'/2} |2^{j_n'}y/a(n)|^{N_\psi}}{(1+2^{j'_n}|y/a(n)|)^{N_\psi+\alpha}}2^{j_n(\frac{1}{2}-\alpha)}\left|\frac{y}{a(n)}\right|^{N_\psi}\bigg)\frac{1}{|y|^{2H+1}}dy
$$
$$
=\frac{1}{a(n)^{\alpha+N_\psi}}\int_{-a(n)\pi}^{a(n)\pi}\left(\widetilde C\frac{2^{j/2} |2^jy|^{N_\psi}}{(1+2^j|y|)^{N_\psi+\alpha}}2^{j'(\frac{1}{2}-\alpha)}|y|^{N_\psi} + \widetilde C'\frac{2^{j'/2} |2^{j'}y|^{N_\psi}}{(1+2^{j'}|y|)^{N_\psi+\alpha}}2^{j(\frac{1}{2}-\alpha)}|y|^{N_\psi}\right)\frac{1}{|y|^{2H+1}}dy
$$
$$
\leq \frac{1}{a(n)^{\alpha+N_\psi}}\int_{-a(n)\pi}^{a(n)\pi} C'\frac{ |y|^{2N_\psi}}{(1+C''|y|)^{N_\psi+\alpha}}\frac{1}{|y|^{2H+1}}dy
$$
$$
\leq \frac{1}{a(n)^{\alpha}}\int_{\R} C'\frac{ |y|^{N_\psi}}{(1+C''|y|)^{N_\psi+\alpha}}\frac{1}{|y|^{2H+1}}dy\to 0, \quad n\to\infty,
$$
{where $\int_{\R} \frac{ |y|^{N_\psi}}{(1+C''|y|)^{N_\psi+\alpha}}\frac{1}{|y|^{2H+1}}dy < \infty$ by \eqref{e:Npsi>alpha}}. In other words, \eqref{e:H-H_0_1_fbm} holds.

On the other hand,
 $$
 \frac{1}{a(n)^{2H+1}}  \int_{-\pi}^\pi {\mathcal H}^{(0)}_{j_n}(x)\overline{{\mathcal H}^{(0)}_{j'_n}(x)}e^{ixa(n)(2^jk-2^{j'}k')}g(x)dx
  $$
  $$=  \frac{2^{(j_n+j_n')/2}}{a(n)^{2H+1}} \int_{-\pi}^\pi |\widehat{\phi}(x)|^2 \widehat{\psi}(a(n)2^jx)\overline{\widehat\psi(a(n)2^{j'}x)}e^{ixa(n)(2^jk-2^{j'}k')}g(x)dx
  $$
\begin{equation}\label{e:H-H0_fbm_terms1and2}
   \frac{2^{(j_n+j_n')/2}}{a(n)^{2H+1}} \int_{-\pi}^\pi |\widehat{\phi}(x)|^2 \widehat{\psi}(a(n)2^jx)\overline{\widehat\psi(a(n)2^{j'}x)}e^{ixa(n)(2^jk-2^{j'}k')}\sum_{\ell\neq 0}\frac{\sigma^2/C^2(H)}{|x+2\pi\ell|^{2H+1}}dx
$$$$
   + \frac{2^{(j_n+j_n')/2}}{a(n)^{2H+1}} \int_{-\pi}^\pi |\widehat{\phi}(x)|^2 \widehat{\psi}(a(n)2^jx)\overline{\widehat\psi(a(n)2^{j'}x)}e^{ixa(n)(2^jk-2^{j'}k')}\frac{\sigma^2/C^2(H)}{|x|^{2H+1}}dx.
\end{equation}
The first term in \eqref{e:H-H0_fbm_terms1and2} vanishes as $n\to\infty$ since the integrand is bounded ($\|\widehat{\phi}\|_\infty\leq\|\phi\|_{L^1}$). For the second term in \eqref{e:H-H0_fbm_terms1and2}, by a change of variable $y = a(n)x$,
  $$
2^{(j+j')/2}\int_{-a(n)\pi}^{a(n)\pi} \Big|\widehat{\phi}\Big(\frac{y}{a(n)}\Big)\Big|^2 \widehat{\psi}(2^jy)\overline{\widehat\psi(2^{j'}y)}e^{iy(2^jk-2^{j'}k')}\frac{\sigma^2/C^2(H)}{|y|^{2H+1}}dy
  $$
  \begin{equation}\label{e:varrho_limit}
  \to \frac{\sigma^22^{(j+j')/2}}{C^2(H)}\int_{\R} \frac{\widehat{\psi}(2^jy)\overline{\widehat\psi(2^{j'}y)}}{|y|^{2H+1}}e^{iy(2^jk-2^{j'}k')}dx,\quad n\to\infty,
  \end{equation}
  by dominated convergence. Consider the function $\varrho$ as in \eqref{e:varrho(j,j',r)}. Thus, for every $r=2^jk-2^{j'}k'$, \eqref{e:H-H_0_1_fbm} and \eqref{e:varrho_limit} imply that
\begin{equation}\label{varrho_to_varrho_inf}
\frac{\varrho(j_n,j'_n,a(n)r)}{a(n)^{2H+1}} \to \varrho_\infty(j,j',r) = \frac{\sigma^22^{(j+j')/2}}{C^2(H)}\int_{\R} \frac{\widehat{\psi}(2^jy)\overline{\widehat\psi(2^{j'}y)}}{|y|^{2H+1}}e^{iyr}dx, \quad n \rightarrow \infty.
\end{equation}
  Now, consider the sum
  $$
  \frac{a(n)}{n}\sum_{k=1}^{n_{a,j}}\sum_{k'=1}^{n_{a,j'}}\left(\frac{\E d_{\textnormal{fBm}}(a(n)2^j,k)d_{\textnormal{fBm}}(a(n)2^{j'},k')}{a(n)^{2H+1}}\right)^2. 
  $$
  Similar to the argument in part $(i)$, Proposition \ref{p:tfbm_decorr} implies that, for all but finitely many $r$,
  $$
  \left|\frac{\varrho(a(n)j,a(n)j',ra(n))}{a(n)^{2H+1}}\right|^2\leq \left(\frac{C}{a(n)^{2H+1}}\frac{2^{(j_n+j_n')(N_{\psi}+\frac{1}{2})}}{|a(n)r|^{2N_{\psi}-2H}}\right)^2
  =\left( C\frac{2^{(j+j')(N_{\psi}+\frac{1}{2})}}{|r|^{2N_{\psi}-2H}}\right)^2 \in \ell^1(\Z).
  $$
  Thus, as in part $(i)$, by dominated convergence, as $n\to\infty$,
  $$
   \frac{a(n)}{n}\sum_{k=1}^{n_{a,j}}\sum_{k'=1}^{n_{a,j'}}\left(\frac{\E d_{\textnormal{fBm}}(a(n)2^j,k)d_{\textnormal{fBm}}(a(n)2^{j'},k')}{a(n)^{2H+1}}\right)^2 \to 2^{\min\{j,j'\}-(j+j')}\sum_{r\in\Z}\varrho^2_{\infty}(j,j',r2^{\min\{j,j'\}}),
  $$
  as was to be shown.
$\Box$

\end{proof}

Recall that the hypotheses $H_0$ and $H_1$ are defined by \eqref{e:H0}. In the following lemma, we establish the asymptotic joint normality of the log-regression estimator \eqref{eq:def_wavereg} and the $M$-estimator \eqref{eq:def_Mest_fbm} under both $H_0$ and $H_1$, where the scale range on which the former estimator is based grows indefinitely. Under $H_1$, namely, for tfBm, the wavelet spectrum will resemble that of an fBm for initial octaves provided $\lambda$ is not too large. Thus, over a pair of initial octaves $j_1,j_2$, $\{\E d_{\text{tfBm}}^2(2^j,0)\}_{j = j_1,j_2}\approx\{\E d_{\text{fBm}}^2(2^j,0)\}_{j= j_1,j_2}$.  Therefore, for a tfBm with parameters $H,\lambda$, $\widehat{H}(2^{j_1},2^{j_2})\approx H$ over the initial scales but $\widetilde{H}(a(n)2^{j_3},a(n)2^{j_4})<H$ over large scales due to the presence of the tempering factor. This statement is made rigorous in the proof of part $(ii)$ of the lemma below.

\begin{lemma}\label{Hest_AN}  Let $j_1<j_2<j_3<j_4$ be a set of octaves, and let $(\widehat{H}_n,\widehat{\sigma^2_n}):=(\widehat{H}(2^{j_1},2^{j_2}),\widehat{\sigma^2}(2^{j_1},2^{j_2}))$ be the M-estimator defined in \eqref{eq:def_Mest_fbm}, and let $\widetilde{H}_n:=\widetilde{H}(a(n)2^{j_3},a(n)2^{j_4})$ be the wavelet regression estimator as defined in \eqref{eq:def_wavereg}, where the sequence $\{a(n)\}_{n\in\N}$ satisfies \eqref{e:seq_decay_assumption}.
\begin{itemize}
\item[$(i)$]Under $H_0$ and condition \eqref{e:Npsi>alpha}, 
\begin{equation}\label{e:test_stat_under_H0}
\sqrt{n}\Big((\widehat{H}_n,\widehat{\sigma^2_n},a^{-1/2}(n)\widetilde{H}_n) - (H,\sigma^2,H) \Big) \stackrel{d}\to \mathcal{N}(0,M), \quad n \rightarrow \infty.
\end{equation}
In \eqref{e:test_stat_under_H0}, $M$ is the $3\times3$ matrix given by
\begin{equation}\label{e:matrix_M}
M = \begin{pmatrix}
N & \mathbf{0}\\
\mathbf{0} & \gamma_0^2
\end{pmatrix},
\end{equation}
$\gamma_0^2$ is given by \eqref{e:gamma0}, and $N$ is the $2\times 2$ matrix
$$
N=N(\bth_0)$$$$=\widetilde Q(\bth_0)^{-1}\left(\sum_{j= j_1,j_2} \sum_{j'= j_1,j_2}\frac{2^{\frac{j+j'}{2}+2}w_jw_{j'}\partial_\ell \Edjf \partial_i\Edjpf}{[\Edjf\Edjpf]^2\log^4 2 }G_{j,j'}\right)_{i,\ell=1,2}
$$
\begin{equation}\label{e:N(theta0)}
\cdot \hspace{1mm}(\widetilde{Q}(\bth_0)^{-1})^{-\top},
\end{equation}
where
\begin{equation}\label{e:def_widetilde_Q}
\widetilde{Q}(\bth)= \left(2\sum_{j=j_1,j_2} \frac{w_j\partial_\ell\E_{\bth} d_{\textnormal{fBm}}^2(2^j,0)\partial_i\E_{\bth} d_{\textnormal{fBm}}^2(2^j,0) }{[\E_{\bth} d_{\textnormal{fBm}}^2(2^j,0)]^2 \log^2(2)}\right)_{i,\ell =1,2},
\end{equation}
and $(G_{j,j'})_{j,j'= j_1,j_2}$ are the entries of the matrix $G$ in \eqref{e:sqrt(n)(WfBm-EWfBm)->Normal}.
\item[$(ii)$] Alternatively, under $H_1$ and assumption \eqref{e:slope_assumption}, there exists a $\bth'=(H',\sigma'^2)$, with $H'<H$, such that
\begin{equation}\label{e:Hhat,sigma^2hat,Htilde_converge}
\sqrt{n}((\widehat{H}_n,\widehat{\sigma^2_n},a^{-1/2}(n)\widetilde{H}_n)-(H',\sigma'^2,-1/2)) \stackrel{d}\to \mathcal{N}(0,L), \quad n \rightarrow \infty.
\end{equation}
In \eqref{e:Hhat,sigma^2hat,Htilde_converge}, $L$ is the $3\times3$ symmetric positive semidefinite matrix
$$
L = \begin{pmatrix}
K & \mathbf{0}\\
\mathbf{0} & \gamma_1^2
\end{pmatrix},
$$
where
$\gamma_1^2 := \frac{1}{4\log^2(2)\zeta_\infty^2(0,0,0)}\sum_{j,j'=j_3}^{j_4}\varpi_j\varpi_{j'}\widetilde{F}_{j,j'}$,
\begin{equation}\label{e:F-tilde_j,j'}
\widetilde{F}_{j,j'} =  2^{\min\{j,j'\}+1}\sum_{r\in\Z} \zeta_{\infty}^2(j,j',{r2^{\min\{j,j'\}}}),
\end{equation}
$\zeta_\infty(j,j',r)$ is given by \eqref{e:zeta_inf},
\begin{equation}\label{e:matrix_K}
K=K(\bth')
$$$$= \widetilde{Q}(\bth')^{-1} \left(\sum_{j= j_1,j_2}\sum_{j'= j_1,j_2}\frac{2^{\frac{j+j'}{2}+2}w_jw_{j'}\partial_\ell\Edjfa\partial_i\Edjpfa}{[\Edjfa\Edjpfa]^2\log^4 2 }F_{j,j'}\right)_{i,\ell=1,2}\widetilde{Q}(\bth')^{-\top}
\end{equation}
and $\widetilde{Q}(\bth)$ is given by \eqref{e:def_widetilde_Q}, and each entry of the matrix $(F_{j,j'})_{j,j'=j_1,j_2}$ is given by \eqref{e:Fjj'}.
\end{itemize}
\end{lemma}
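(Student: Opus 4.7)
The plan is to prove Lemma \ref{Hest_AN} by combining a Taylor-expansion argument for the $M$-estimator at the fixed small octaves $j_1,j_2$ (mimicking the proof of Theorem \ref{t:asympt} (ii)) with the existing asymptotic theory for the wavelet log-regression estimator at the growing octaves $a(n)2^{j_3},a(n)2^{j_4}$ (expression \eqref{e:fbm_a_AN} in (i), and an adaptation to tfBm in (ii)). The block-diagonal structure of $M$ and $L$ encodes the asymptotic independence of the fixed-scale and growing-scale statistics, which I will establish via the Cram\'er-Wold device applied to the joint vector of sample wavelet variances, using Lemma \ref{single_scale_limit_decorr} to drive the cross-block covariances to zero.

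For part (i), I would first replicate the Taylor-expansion scheme from Theorem \ref{t:asympt} (ii), but now using Lemma \ref{fBm_AN} for the asymptotic normality of the small-scale wavelet variances and the $2$-parameter objective \eqref{eq:def_Mest_fbm}, to obtain $\sqrt{n}((\widehat H_n,\widehat{\sigma^2_n})-(H,\sigma^2))\stackrel{d}\to \mathcal{N}(\mathbf 0,N)$ with $N$ as in \eqref{e:N(theta0)}. Next, I would invoke \eqref{e:fbm_a_AN} for the marginal asymptotic normality of $\widetilde H_n$ at rate $\sqrt{n/a(n)}$. Joint asymptotic normality with the claimed block-diagonal covariance then follows from the Cram\'er-Wold device, by running the Lindeberg scheme of Lemma \ref{l:ANwspec} on the combined vector $(W(2^{j_1}),W(2^{j_2}),W(a(n)2^{j_3}),W(a(n)2^{j_4}))$; Lemma \ref{fBm_AN} and Lemma \ref{Cov_scale_to_infty} (ii) supply the within-block covariances, while the cross-block covariances go to zero thanks to the bound $|\E d_{\textnormal{fBm}}(2^j,k)d_{\textnormal{fBm}}(a(n)2^{j'},k')| = \mathcal{O}(a(n)^{2H+\frac{1}{2}-N_\psi}) + o(a(n)^{-1/2})$ from Lemma \ref{single_scale_limit_decorr}, combined with \eqref{e:seq_decay_assumption} and \eqref{e:Npsi>alpha}. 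The joint delta method then transfers this to $(\widehat H_n,\widehat{\sigma^2_n},\widetilde H_n)$.

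Part (ii) follows the same skeleton, modified by the fact that the $M$-estimator is now misspecified since the data come from tfBm. Assumption \eqref{e:slope_assumption} guarantees the existence of a pseudo-true pair $(H',\sigma'^2)$ satisfying \eqref{e:Ed2_fbm=Ed2_tfBm}; since the fBm spectral ratio at scales $2^{j_1},2^{j_2}$ is strictly increasing in $H$ while the tfBm ratio at $(H,\lambda)$ is strictly dominated by its fBm counterpart at $H$ (righthand inequality in \eqref{e:slope_assumption}), one obtains $H'<H$. Consistency of $(\widehat H_n,\widehat{\sigma^2_n})$ at $(H',\sigma'^2)$ follows by the identifiability/compactness argument of Theorem \ref{t:asympt} (i), now using Lemma \ref{l:ANwspec} for the tfBm sample wavelet variances, and the Taylor expansion then yields the asymptotic normality with covariance $K$ of \eqref{e:matrix_K}. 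For $\widetilde H_n$: the flattening of the tfBm log-spectrum (\eqref{e:disc_slrd2}) together with Lemma \ref{Cov_scale_to_infty} (i) shows that $\log_2 \E_{\bth_0} d_{\textnormal{tfBm}}^2(a(n)2^j,0) \to \log_2 \zeta_\infty(0,0,0)$, so by the weight identities \eqref{e:sum-varpi=0,sum-j*varpij=1} we have $\widetilde H_n \pto -1/2$; asymptotic normality at rate $\sqrt{n/a(n)}$ is obtained via the delta method from the joint asymptotic normality of $(W(a(n)2^j))_{j=j_3,j_4}$, which is itself established by rerunning the Lindeberg argument of Lemma \ref{l:ANwspec} at the growing scales and computing the limiting covariance via Lemma \ref{Cov_scale_to_infty} (i) (giving the matrix $\widetilde F$ in \eqref{e:F-tilde_j,j'}). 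Asymptotic independence of the small-scale and large-scale blocks is obtained exactly as in part (i).

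The main technical obstacle is controlling the cross-covariances between fixed-scale and growing-scale wavelet variances after the joint rescaling $\sqrt{n}$ and $\sqrt{n/a(n)}$: by Isserlis this cross-covariance equals $\frac{2}{n_j n_{a,j'}}\sum_{k,k'}\big(\E d(2^j,k) d(a(n)2^{j'},k')\big)^2$, whose vanishing requires combining the $o(a(n)^{-1/2})$ and $\mathcal{O}(a(n)^{2H+\frac{1}{2}-N_\psi})$ bounds from Lemma \ref{single_scale_limit_decorr} with both \eqref{e:seq_decay_assumption} and \eqref{e:Npsi>alpha} in a careful accounting of the exponents of $n$ and $a(n)$. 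The other delicate piece, specific to part (ii), is the uniform-in-$n$ domination needed to apply dominated convergence to the tfBm covariances $\zeta(j_n,j'_n,r\,a(n))$ at the rescaled octaves; this template is already set up in the proof of Lemma \ref{Cov_scale_to_infty} (i) and will be reused both to identify the limit $-1/2$ of $\widetilde H_n$ and to pin down the explicit form of the scaled asymptotic variance $\gamma_1^2$ via $\zeta_\infty$.
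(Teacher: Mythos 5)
Your proposal is correct and follows essentially the same route as the paper's proof: asymptotic decorrelation of the fixed-scale and growing-scale blocks via the Isserlis relation and the bounds of Lemma \ref{single_scale_limit_decorr} (with \eqref{e:seq_decay_assumption} and \eqref{e:Npsi>alpha}), joint normality by rerunning the Cram\'er--Wold/Lindeberg scheme of Lemmas \ref{l:ANwspec} and \ref{fBm_AN} on the combined vector of wavelet variances with within-block limits from Lemma \ref{Cov_scale_to_infty}, and mean-value/Taylor expansions of the (possibly misspecified) $M$-estimator and of $\widetilde H_n$ around the pseudo-true value $(H',\sigma'^2)$ and $-1/2$, concluded by Slutsky. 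The only differences are presentational (you phrase the transfer step as a joint delta method where the paper writes out explicit mean value expansions), so no gap to report.
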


\begin{proof}
To establish $(i)$, first we will show that, with appropriate normalization, the vectors
\begin{equation}\label{e:wavelet_var_fixed_coarse_scales_H0}
\left(\sqrt{n}W(2^{j_1}),\sqrt{n}W(2^{j_2})\right) \textnormal{ and }\left(\sqrt{n/a(n)}W(a(n)2^{j_3}),\ldots,\sqrt{n/a(n)}W(a(n)2^{j_4})\right)
\end{equation}
are asymptotically decorrelated. Afterwards, we will establish the joint asymptotic normality of said vectors, from which the expression \eqref{e:test_stat_under_H0} will follow using Slutsky's theorem. Note that, by \eqref{e:fbm_decorr}, for all large $n$, if $|2^jk-a(n)2^{j'}k'|\geq2^{j'+1}a(n)T$, we have
$$
|\E d_{\text{fBm}}(2^j,k)d_{\text{fBm}}(a(n)2^{j'},k')| \leq C\frac{a(n)^{N_\psi+\frac{1}{2}}}{|2^jk-a(n)2^{j'}k'|^{2N_{\psi}-2H}}\leq C' a(n)^{\frac{1}{2}-N_\psi + 2H}.
$$
Define the normalized sample wavelet variance
$$
W_a(2^j):=\frac{1}{n_{a,j}}\sum_{k=1}^{n_{a,j}}\frac{d^2(a(n)2^j,k)}{a(n)^{2H+1}},
$$
where
$$
n_{a,j} = \frac{n}{a(n)2^j}
$$
is the number of coefficients available at scale $a(n)2^j$ (cf.\ \eqref{e:nj=n/2^j}).

Let
\begin{equation}\label{e:b(n)}
b(n)=\lfloor a(n)\rfloor^{\frac{5}{2}}.
\end{equation}
Note that, for fixed $k'$,
$$
\#\{k: |2^j k - a(n)2^{j'}k'|\leq b(n)\}
$$
$$
=\#\{k: a(n)2^{j'}k'-b(n)\leq 2^jk\leq a(n)2^{j'}k' +b(n)\}\leq 2 \lfloor b(n)/2^j \rfloor +1.
$$
Consider the set of indices
\begin{equation}\label{e:A_n}
A_n(b(n)):= \{(k,k'):|2^j k - a(n)2^{j'}k'|\leq b(n)\},
\end{equation}
where, for notational simplicity, we write $A_n$. Then,
\begin{equation}\label{bn_count}
\sum_{k=1}^{n_j}\sum_{k'=1}^{n_{a,j'}}1_{A_n}(k,k') \leq n_{a,j'} ( 2 \lfloor b(n)/2^j \rfloor +1).
\end{equation}
Recast
$$
\Cov\left(\sqrt{n}W(2^j),\sqrt{n/a(n)}W_a(2^{j'})\right) = \frac{n}{a^{1/2}(n)}\Cov\left(W(2^j),W_a(2^{j'})\right)
$$
$$
= \frac{2na(n)^{-(2H+3/2)}}{n_{j}n_{a,j'}}\sum_{k=1}^{n_j}\sum_{k'=1}^{n_{a,j'}}\left(\E d_{\textnormal{fBm}}(2^j,k)d_{\textnormal{fBm}}(a(n)2^{j'},k')\right)^2
$$
$$
= \frac{2na(n)^{-(2H+3/2)}}{n_{j}n_{a,j'}}\sum_{k=1}^{n_j}\sum_{k'=1}^{n_{a,j'}}(1_{A_n}(k,k')+1_{A_n^c}(k,k'))\left(\E d_{\textnormal{fBm}}(2^j,k)d_{\textnormal{fBm}}(a(n)2^{j'},k')\right)^2,
$$
where we make use of the Isserlis relation (cf.\ \eqref{e:Isserlis=>Cov=2(E)^2}). On the set $A_n$ as in \eqref{e:A_n}, by applying relations \eqref{bn_count} and \eqref{e:EdfBm(2^j,k)dfBm(2^{j'},k')} we obtain
$$
 \frac{2na(n)^{-(2H+3/2)}}{n_{j}n_{a,j'}}\sum_{k=1}^{n_j}\sum_{k'=1}^{n_{a,j'}}1_{A_n}(k,k')\left(\E d_{\textnormal{fBm}}(2^j,k)d_{\textnormal{fBm}}(a(n)2^{j'},k')\right)^2
$$

\begin{equation}\label{e:Cov(sqrt(n)W(2j),sqrt(n/a)W(a2j'))->0_1a}
\leq Ca(n)^{-(2H+3/2)}( 2 \lfloor b(n)/2^j \rfloor +1)\left[ \mathcal{O}\left(a(n)^{2H+\frac{1}{2}-N_\psi}\right) + o\left(a(n)^{-\frac{1}{2}}\right)\right]^2.
\end{equation}
On one hand, if $2H+\frac{1}{2}-N_\psi\geq -\frac{1}{2}$, \eqref{e:Cov(sqrt(n)W(2j),sqrt(n/a)W(a2j'))->0_1a} is bounded by
\begin{equation}\label{e:Cov(sqrt(n)W(2j),sqrt(n/a)W(a2j'))->0_1b}
Ca(n)^{-(2H+3/2)}( 2 \lfloor b(n)/2^j \rfloor +1)\left(a(n)^{2H+\frac{1}{2}-N_\psi}\right)^2 \sim a(n)^{2H+2 - 2N_\psi} \to 0,
\end{equation}
as $n \rightarrow \infty$, since $N_\psi\geq 2$ by condition \eqref{e:Npsi>alpha}. On the other hand, if $2H+\frac{1}{2}-N_\psi<-\frac{1}{2}$, \eqref{e:Cov(sqrt(n)W(2j),sqrt(n/a)W(a2j'))->0_1a} is bounded by
\begin{equation}\label{e:Cov(sqrt(n)W(2j),sqrt(n/a)W(a2j'))->0_1c}
Ca(n)^{-(2H+3/2)}( 2 \lfloor b(n)/2^j \rfloor +1)o\Big(\frac{1}{\sqrt{a(n)}}\Big)^2\sim o\left(a(n)^{-2H}\right) \to 0.
\end{equation}
Combining \eqref{e:Cov(sqrt(n)W(2j),sqrt(n/a)W(a2j'))->0_1b} and \eqref{e:Cov(sqrt(n)W(2j),sqrt(n/a)W(a2j'))->0_1c},
\begin{equation}\label{e:Cov(sqrt(n)W(2j),sqrt(n/a)W(a2j'))->0_1}
 \frac{2na(n)^{-(2H+3/2)}}{n_{j}n_{a,j'}}\sum_{k=1}^{n_j}\sum_{k'=1}^{n_{a,j'}}1_{A_n}(k,k')\left(\E d_{\textnormal{fBm}}(2^j,k)d_{\textnormal{fBm}}(a(n)2^{j'},k')\right)^2\to 0,
\end{equation}
as $n \rightarrow \infty$. Moreover, expression \eqref{e:fbm_decorr} holds on $A_n^c$. Therefore,
$$
 \frac{2na(n)^{-(2H+3/2)}}{n_{j}n_{a,j'}}\sum_{k=1}^{n_j}{\sum_{k'=1}^{n_{a,j'}}}1_{A^c_n}(k,k')\left(\E d_{\textnormal{fBm}}(2^j,k)d_{\textnormal{fBm}}(a(n)2^{j'},k')\right)^2
$$
\begin{equation}\label{e:Cov(sqrt(n)W(2j),sqrt(n/a)W(a2j'))->0_2}
\leq C\frac{n}{a(n)^{2H+3/2}}\Big(\frac{a(n)^{N_\psi+\frac{1}{2}}}{|b(n)|^{2N_\psi-2H}}\Big)^{2}\sim \frac{n}{a(n)^{8N_\psi-8H+\frac{1}{2}}} \to 0,
\end{equation}
by \eqref{e:b(n)} and \eqref{e:seq_decay_assumption}, since $8N_\psi-8H+\frac{1}{2} > \frac{17}{2}>  2 \beta + 1$. Thus, we have shown, for any $j,j'$,
$$
\Cov\left(\sqrt{n}W(2^j),\sqrt{n/a(n)}W_a(2^{j'})\right) \to 0,\quad n\to\infty.
$$
In particular,
\begin{equation}\label{e:Cov(sqrt(n)W(2j),sqrt(n/a)W(a2j'))->0_3}
\Cov\left(\sqrt{n}\left(W(2^{j_1}),W(2^{j_2})\right),\sqrt{\frac{n}{a(n)}}\left(W_a(2^{j_3}),\ldots,W_a(2^{j_4})\right)\right) \to \mathbf{0},\quad n\to\infty.
\end{equation}
Recall that, for any set of octaves $j_3,\ldots,j_4$,
\begin{equation}\label{e:wavelet_var_fBm_weak_limit}
\sqrt{\frac{n}{a(n)}}\Big(W_a(2^j)-C2^{j(2H+1)} \Big)_{j=j_3,\ldots,j_4} \stackrel{d}\rightarrow {\mathcal N}(\mathbf{0},\widetilde{G}^{j_3,j_4}), \quad n\to\infty,
\end{equation}
where $C=\frac{\sigma^2}{C^2(H)}\int_\R\frac{|\widehat{\psi}(x)|^2}{|x|^{2H+1}}dx$ and  $\widetilde{G}^{j_3,j_4} = \{\widetilde{G}_{j,j'}\}_{j,j'= j_3,\ldots,j_4}$ are given by \eqref{e:Gtilde} {(see Remark \ref{r:sqrt(n/a(n))(Wa(2j)-C2j(2H+1))_asympt_norm}).} So, write
$$
\mathbf{v}_{j_1,j_2} = \left(W(2^{j_1})-\E d^2(2^{j_1},0),W(2^{j_2})-\E d^2(2^{j_2},0)\right),
$$
$$
\mathbf{v}_{j_3,j_4} = \left({W_a(2^{j_3})-C2^{j_3(2H+1)}},\ldots,W_a(2^{j_4})-C2^{j_4(2H+1)}\right),
$$
where $C$ is as in \eqref{e:wavelet_var_fBm_weak_limit}. By relation \eqref{e:Cov(sqrt(n)W(2j),sqrt(n/a)W(a2j'))->0_3},
\begin{equation}\label{e:Cov(sqrt{n}v_{j_1,j_2},sqrt{n/a(n)}v_{j_3,j_4})}
\Cov\left(\sqrt{n}\mathbf{v}_{j_1,j_2},\sqrt{\frac{n}{a(n)}}\mathbf{v}_{j_3,j_4}\right)\to \mathbf0, \quad n \rightarrow \infty,
\end{equation}
This shows that, with appropriate normalization, the random vectors \eqref{e:wavelet_var_fixed_coarse_scales_H0} are asymptotically uncorrelated, as claimed. To establish the joint asymptotic normality of \eqref{e:wavelet_var_fixed_coarse_scales_H0}, we now adapt the proofs of Lemmas \ref{l:ANwspec} and \ref{fBm_AN}. In fact, take
$$
V_n=\Big(d(2^{j_1},1),\ldots,d(2^{j_1},n_{j_1}),d(2^{j_2},1),\ldots,d(2^{j_2},n_{j_2}),$$$$\frac{d(a(n)2^{j_3},1)}{a(n)^{H+\frac{1}{2}}},\ldots,\frac{d(2^{j_3},n_{a,j_3})}{a(n)^{H+\frac{1}{2}}},\ldots,\frac{d(a(n)2^{j_4},1)}{a(n)^{H+\frac{1}{2}}},\ldots,\frac{d(2^{j_4},n_{a,j_4})}{a(n)^{H+\frac{1}{2}}}\Big)\in \R^\nu,
$$
where $\nu= n_{j_1}+n_{j_2}+n_{a,j_3}+\ldots+n_{a,j_4}$, and with the number of octaves $m=j_4-j_3+3$. Let
\begin{equation}\label{e:new_Dn}
D_n =\diag \Big(\underbrace{\frac{a_1}{n_{j_1}},\ldots,\frac{a_1}{n_{j_1}}}_{n_{j_1} \text{ terms}},\underbrace{\frac{a_2}{n_{j_2}},\ldots,\frac{a_2}{n_{j_2}}}_{n_{j_2} \text{ terms}},\underbrace{\frac{a_3}{n_{j_3}\sqrt{{a(n)}}},\ldots,\frac{a_3}{n_{j_3}\sqrt{{a(n)}}}}_{n_{a,j_3}\text{ terms}},\ldots,\underbrace{\frac{a_m}{n_{j_4}\sqrt{{a(n)}}},\ldots,\frac{a_m}{n_{j_4}\sqrt{{a(n)}}}}_{n_{a,j_4}\text{ terms}}\Big)
\end{equation}
and take $U_n= \sum_{j=j_1,j_2} a_jW(2^j) +\sum_{j=j_3}^{j_4} \frac{a_j}{\sqrt{a(n)}}W_a(2^j)$. Then, again we arrive at $\sqrt{\Var(U_n)}=\mathcal{O}(n^{-1/2})$ (cf.\ \eqref{e:nVarTn_conv}). Moreover, analogous to \eqref{e:fbm_eigbound} (see also \eqref{ANmaxbound}), the maximum eigenvalue $\lambda_{\text{max}}(n)$ of $\Gamma_n=\E V_nV_n^\top$ is bounded by %

\begin{equation}\label{e:lambda_max_fbm_with_mixed_scales}
\lambda_{\textnormal{max}}(n)\leq
(j_4-j_3+3)\max\bigg\{{\max_{j,j'=j_1,j_2}}\max_{k=1,\ldots,n_j} \sum_{k'=1}^{n_{j'}} | \E d_{\textnormal{fBm}}(2^j,k)d_{\textnormal{fBm}}(2^{j'},k')|,$$
$$ \max_{j=j_1,j_2}\max_{j'=j_3,\ldots,j_4}\max_{k=1,\ldots,n_j} \sum_{k'=1}^{n_{j'}}  \frac{|\E d_{\textnormal{fBm}}(2^j,k)d_{\textnormal{fBm}}(a(n)2^{j'},k')|}{a(n)^{H+\frac{1}{2}}},$$
$$ \max_{j=j_1,j_2}\max_{j'=j_3,\ldots,j_4}\max_{k=1,\ldots,n_j} \sum_{k'=1}^{n_{j'}}  \frac{|\E d_{\textnormal{fBm}}(a(n)2^j,k)d_{\textnormal{fBm}}(2^{j'},k')|}{a(n)^{H+\frac{1}{2}}},$$
$$ \max_{j,j'=j_3,\ldots,j_4}\max_{k=1,\ldots,n_j} \sum_{k'=1}^{n_{j'}}   \frac{|\E d_{\textnormal{fBm}}(a(n)2^j,k)d_{\textnormal{fBm}}(a(n)2^{j'},k')|}{a(n)^{2H+1}}\bigg\}.
\end{equation}
In turn, the right-hand side of \eqref{e:lambda_max_fbm_with_mixed_scales} is bounded as $n\to\infty$ due to both \eqref{e:fbm_decorr} and \eqref{e:EdfBm(2^j,k)dfBm(2^{j'},k')}, i.e., $\lambda_{\text{max}}(n)=o\left(\sqrt{\Var{U_n}}\right)$  (cf.\ \eqref{e:lambda-max=o(sqrt(VarTn))}). Thus, by the proof of Lemma \ref{fBm_AN}, we obtain
\begin{equation}\label{e:v_j1,..v_j4_AN_under_H0}
\bbR^{j_4-j_3+3} \ni \sqrt{n}
\begin{pmatrix}
\mathbf{v}_{j_1,j_2} \\
a^{-1/2}(n)\mathbf{v}_{j_3,j_4}
\end{pmatrix}
\stackrel{d}{\to}
\mathcal{N}\left(\mathbf0,
\begin{pmatrix}G^{j_1,j_2} & \mathbf 0\\
\mathbf{0} &\widetilde{G}^{j_3,j_4} \\
\end{pmatrix}
\right), \quad n \rightarrow \infty,
\end{equation}
where $G^{j_1,j_2} = \{G_{j,j'}\}_{j,j'= j_1,j_2}$, $\widetilde{G}^{j_3,j_4} = \{\widetilde{G}_{j,j'}\}_{j,j'= j_3,\ldots,j_4}$, $G$ is the matrix appearing in \eqref{e:sqrt(n)(WfBm-EWfBm)->Normal}, and $\widetilde{G}$ is the matrix \eqref{e:Gtilde}.

Now, to establish \eqref{e:test_stat_under_H0}, we will reexpress the vector of estimators $(\widehat{H}_n, \widehat{\sigma^2_n},\widetilde{H}_n)$ in terms of wavelet variances. We start off with $(\widehat{H}_n, \widehat{\sigma^2_n})$. Let $f_n$ and $\widetilde{\eta}_j$ be as in \eqref{e:def_estimator} and \eqref{e:eta-tilde}, respectively. By the mean value theorem,
$$
\sqrt{n}((\widehat{H}_n,\widehat{\sigma^2_n})-(H,\sigma^2))  = -\left[D^2f_n(H^*_n,{\sigma^2}^*_n)\right]^{-1}2\sum_{j= j_1,j_2}\sqrt{n}\left(\widetilde{\eta_j}(H,\sigma^2)-\log_2 W(2^j)\right)w_jD\widetilde\eta_{j}(H,\sigma^2)
$$
\begin{equation}\label{e:fbm_Mest_meanval}
= -\left[D^2f_n(H^*_n,{\sigma^2}^*_n)\right]^{-1}2\sum_{j= j_1,j_2}\sqrt{n}\left(B(j) +\frac{W(2^j)-\E d_{\textnormal{fBm}}^2(2^j,0)}{W^*(2^j)\log(2)}\right)w_jD\widetilde\eta_{j}(H,\sigma^2).
\end{equation}
In \eqref{e:fbm_Mest_meanval}, $(H^*_n,{\sigma^2}^*_n)$ and $W^*(2^j)$ lie between $(\widehat{H}_n,\widehat{\sigma^2_n})$ and $(H,\sigma^2)$, and between $W(2^j)$ and $\E d_{\textnormal{fBm}}^2(2^j,0)$, respectively (cf.\ expression \eqref{e:th3taylor}).  By following an argument similar to that of Theorem \ref{t:asympt}, $(i)$, we obtain $(\widehat{H}_n,\widehat{\sigma^2_n})\pto (H,\sigma^2)$. Further noting that $W(2^j)-\eta_j(\bth)\pto 0$, by continuity (cf.\ expressions \eqref{e:Hessian_conv} and \eqref{e:inv_second_deriv_conv}),
$$
\left[D^2f_n(H^*_n,{\sigma^2}^*_n)\right]^{-1}\pto \widetilde{Q}(\bth_0)^{-1},
$$
where $\widetilde{Q}(\bth)$ is given by \eqref{e:def_widetilde_Q}.
Likewise, by \eqref{e:sqrt(n)(WfBm-EWfBm)->Normal}, $W^*(2^j)\pto \E d_{\textnormal{fBm}}^2(2^j,0)$. 
We now turn to $\widetilde{H}_n$. Note that from \eqref{e:wavelet_var_fBm_weak_limit} and the limit \eqref{varrho_to_varrho_inf},
\begin{equation}\label{e:Wa(2j)->C2^(j(2H+1))}
W_a(2^j)\pto \varrho_{\infty}(j,j,0)= C2^{j(2H+1)}, \quad n \rightarrow \infty.
\end{equation}
By relation \eqref{e:sum-varpi=0,sum-j*varpij=1}, the mean value theorem and the same $C > 0$ as in \eqref{e:Wa(2j)->C2^(j(2H+1))},
$$
\widetilde{H}_n -H= \sum^{j_4}_{j=j_3} \frac{\varpi_{j}}{2} \left( \log_2 W(a(n)2^j) - (2H+1) \right)
$$
$$
= \sum^{j_4}_{j=j_3} \frac{\varpi_{j}}{2}\left(\log_2 W_a(2^j) - j(2H+1) - \log_2C\right) = \sum^{j_4}_{j=j_3} \frac{\varpi_{j}}{2} \left(\log_2 W_a(2^j) - \log_2(C2^{j(2H+1)}) \right)
$$
\begin{equation}\label{e:fbm_wavereg_meanval}
= \sum^{j_4}_{j=j_3} \frac{\varpi_{j}}{2}\left(\frac{W_a(2^j)-C2^{j(2H+1)}}{W_a^*(2^j)\log(2)}\right),
\end{equation}
where $W_a^*(2^j)$ is between $W_a(2^j)$ and $C2^{j(2H+1)}$. In view of \eqref{e:Wa(2j)->C2^(j(2H+1))}, $W_a^*(2^j)\pto C2^{j(2H+1)}$, $n \rightarrow \infty$.

Thus, by considering \eqref{e:fbm_Mest_meanval} and \eqref{e:fbm_wavereg_meanval}, we may rewrite
\begin{equation}\label{e:fbm_estim_matrix}
\sqrt{n}\begin{pmatrix}
\widehat{H}_n-H\\
\widehat{\sigma^2_n}-\sigma^2\\
a(n)^{-\frac{1}{2}}(\widetilde{H}_n-H)\end{pmatrix}=
\sqrt{n}
\begin{pmatrix}
\mathbf{u}_1^{\top}\mathbf{v}_{j_1,j_2}  +u_B \\
\mathbf{u}_2^{\top}\mathbf{v}_{j_1,j_2}+u_B\\
a(n)^{-\frac{1}{2}}\mathbf{u}_3^{\top} \mathbf{v}_{j_3,j_4}
\end{pmatrix}.
\end{equation}
In \eqref{e:fbm_estim_matrix}, $\mathbf{u}_1$, $\mathbf{u}_2$ and $\mathbf{u}_3$ are random vectors satisfying
\begin{equation}\label{e:u1,u2,u3}
\mathbf{u}_1=\mathbf{u}_1(H^*_n,{\sigma^2}^*_n,W^*(2^{j_1}),W^*(2^{j_2}))\pto \mathbf{c}_1, \quad \mathbf{u}_2=\mathbf{u}_2(H^*_n,{\sigma^2}^*_n,W^*(2^{j_1}),W^*(2^{j_2}))\pto \mathbf{c}_2,$$$$  \quad \mathbf{u}_3=\mathbf{u}_3(W_a^*(2^{j_3}),\ldots,W_a^*(2^{j_4}))\pto \mathbf{c}_3,
\end{equation}
as $n \rightarrow \infty$ for vector constants $\mathbf{c}_1$, $\mathbf{c}_2$, $\mathbf{c}_3$. 
Moreover,
$$
u_B=u_B(B(j_1),B(j_2),H^*_n,{\sigma^2}^*_n,W^*(2^{j_1}),W^*(2^{j_2}))
$$
in \eqref{e:fbm_estim_matrix} is a residual term satisfying
\begin{equation}\label{e:sqrt(n)u_B->0}
\sqrt{n}u_B\pto 0, \quad n \rightarrow \infty,
\end{equation}
which is a consequence of \eqref{e:B(j)=o(1/n)}.
By following a proof nearly identical to that of Theorem \ref{t:asympt}, $(ii)$, one has
\begin{equation}\label{e:widehat(H_n)_AN}
n\Var(\widehat{H}_n,\widehat{\sigma^2_n})\to N(\bth_0),\quad n\to\infty
\end{equation}
where $N(\bth_0)$ is the matrix  \eqref{e:N(theta0)}.  Likewise, from \eqref{e:fbm_a_AN}
\begin{equation}\label{e:widetilde(H_n)_AN}
n\Var(\widetilde{H}_n)\to \gamma_0^2,\quad n\to\infty.
 \end{equation}
Thus, in view of 
\eqref{e:v_j1,..v_j4_AN_under_H0},  \eqref{e:fbm_estim_matrix}, \eqref{e:widehat(H_n)_AN}, and \eqref{e:widetilde(H_n)_AN} by Slutsky's theorem, the vector $\sqrt{n}((\widehat{H}_n,\widehat{\sigma^2_n},a^{-1/2}(n)\widetilde{H}_n)-(H,\sigma^2,H))$ is asymptotically normal with covariance matrix
$$
M = \begin{pmatrix}
N & \mathbf{0}\\
\mathbf{0} & \gamma_0^2
\end{pmatrix}.
$$
This shows \eqref{e:test_stat_under_H0}. \\

\noindent We turn to $(ii)$.
First, let $\bth_0=(H,\lambda,\sigma^2)$ be the true parameter vector of the underlying tfBm.  Note the functions
$$
(H,\sigma^2) \mapsto \E d_{\text{fBm}}^2(2^j,0),\quad (H,\lambda,\sigma^2)\mapsto  \E d_{\text{tfBm}}^2(2^j,0),\quad j\in\N
$$
are continuous. {Thus, from assumption \eqref{e:slope_assumption} there exists an $H'=H'(H,\lambda)\in(0,H)$ such that
$$\log_2\E_{H'} d^2_{\text{fBm}}(2^{j_2},0)-\log_2\E_{H'}d^2_{\text{fBm}}(2^{j_1},0) =\log_2\E_{H,\lambda} d^2_{\text{tfBm}}(2^{j_2},0)-\log_2\E_{H,\lambda}d^2_{\text{tfBm}}(2^{j_1},0),$$
 i.e., there is a pair $(H',\sigma'^2)(\neq (H,\sigma^2))$ as in \eqref{e:(H',sigma'^2)} such that \eqref{e:Ed2_fbm=Ed2_tfBm} holds.

 Now, as in the proof of part $(i)$, we first show that, with appropriate normalization, the vectors $\left(\sqrt{n}W(2^{j_1}),\sqrt{n}W(2^{j_2})\right)$ and $\left(\sqrt{n/a(n)}W(a(n)2^{j_3}),\ldots,\sqrt{n/a(n)}W(a(n)2^{j_4})\right)$ are asymptotically decorrelated, and then establish their joint asymptotic normality. Proposition \ref{p:tfbm_decorr} implies that, for large $n$,
$$
\textnormal{Cov}(d(2^j,k),d(a(n)2^{j'},k')) \leq Ca^{1/2}(n)2^{\frac{j+j'}{2}}|2^jk-a(n)2^{j'}k'|^{H-1/2}e^{-\lambda|2^jk-a(n)2^{j'}k'|}
$$
under the condition $|2^jk-a(n)2^{j'}k'|\geq2^{j'+1}a(n)T$. Let
\begin{equation}\label{e:b_*(n)}
b_*(n) = 2^{j'+1}a(n)T
\end{equation}
(cf.\ expression \eqref{e:b(n)}). For fixed $k'$,
$$
\#\{k: |2^j k - a(n)2^{j'}k'|\leq b_*(n)\}
$$
$$
=\#\{k: a(n)2^{j'}k'-b_*(n)\leq 2^jk\leq a(n)2^{j'}k' +b_*(n)\} \leq 2 \lfloor b_*(n)/2^j \rfloor +1.
$$
Now, consider the set $A_n(b_*(n))$, where $A_n(\cdot)$ is given by \eqref{e:A_n}. For notational simplicity, we write, again, $A_n$. Note that, for any $j= j_1,j_2$ and any $j'= j_3,\ldots,j_4$,
$$
\Cov\left(\sqrt{n}W(2^j),\sqrt{n/a(n)}W(a(n)2^{j'})\right) = \frac{n}{a^{1/2}(n)}\Cov\left(W(2^j),W(a(n)2^{j'})\right)
$$
\begin{equation}\label{e:Cov(sqrt{n}W(2^j),sqrt{n/a(n)}W(a(n)2^{j'}))_two_terms}
= \frac{2n}{a^{1/2}(n) n_j n_{a,j}} \sum_{k=1}^{n_{j}}\sum_{k'=1}^{n_{a,j'}}(1_{A_n}(k,k')+1_{A_n^c}(k,k')) \left(\E d_{\text{tfBm}}(2^j,k')d_{\text{tfBm}}(a(n)2^{j'},k)\right)^2.
\end{equation}
Thus, by using \eqref{e:EdtfBm(2^j,k)dtfBm(2^{j'},k')} and relation \eqref{bn_count}, the first double summation term on the right-hand side of \eqref{e:Cov(sqrt{n}W(2^j),sqrt{n/a(n)}W(a(n)2^{j'}))_two_terms} can be bounded by
$$
\frac{2n}{a^{1/2}(n) n_j n_{a,j}} \sum_{k=1}^{n_{j}}\sum_{k'=1}^{n_{a,j'}}1_{A_n}(k,k') \left(\E d_{\text{tfBm}}(2^j,k')d_{\text{tfBm}}(a(n)2^{j'},k)\right)^2
$$
\begin{equation}\label{e:tfbm_Cov(sqrt(n)W(2j),sqrt(n/a)W(a2j'))->0_1}
\leq C\frac{n}{a^{1/2}(n) n_j} ( 2 \lfloor b_*(n)/2^j \rfloor +1) \hspace{1mm}o\Big(\Big(\frac{1}{\sqrt{a(n)}}\Big)^2\Big)\sim a^{1/2}(n)\hspace{1mm}o\Big(\frac{1}{a(n)}\Big)\to 0, \quad n \rightarrow \infty.
\end{equation}
In turn, by \eqref{e:tfbm_decorr}, the second double summation term on the right-hand side of \eqref{e:Cov(sqrt{n}W(2^j),sqrt{n/a(n)}W(a(n)2^{j'}))_two_terms} can be bounded by
$$
\frac{2n}{a^{1/2}(n) n_j n_{a,j}} \sum_{k=1}^{n_{j}}\sum_{k'=1}^{n_{a,j'}}1_{A^c_n}(k,k') \left(\E d_{\text{tfBm}}(2^j,k')d_{\text{tfBm}}(a(n)2^{j'},k)\right)^2
$$
\begin{equation}\label{e:tfbm_Cov(sqrt(n)W(2j),sqrt(n/a)W(a2j'))->0_2}
\leq C\frac{n}{a^{1/2}(n)} \left(a(n)^{1/2}b_*(n)^{H-\frac{1}{2}} {e^{-\lambda b_*(n)}}\right)^2 \to 0, \quad n \rightarrow \infty,
\end{equation}
i.e., for every $j,j'\in\N$, $\Cov\left(\sqrt{n}W(2^j),\sqrt{n/a(n)}W(a(n)2^{j'})\right)\to 0$ as $n\to\infty$. Thus,
\begin{equation}\label{e:Cov(sqrt(n)W(2j),sqrt(n/a)W(a2j'))->0_tfBm}
\Cov\left(\sqrt{n}\left(W(2^{j_1}),W(2^{j_2})\right),\sqrt{\frac{n}{a(n)}}\left(W_a(2^{j_3}),\ldots,W_a(2^{j_4})\right)\right) \to \mathbf{0},\quad n\to\infty.
\end{equation}
Now, writing
$$
\mathbf{v}_{j_1,j_2} = \left(W(2^{j_1})-\E_{H',\sigma'^2} d_{\textnormal{fBm}}^2(2^{j_1},0),{W(2^{j_2})-\E_{H',\sigma'^2} d_{\textnormal{fBm}}^2(2^{j_2},0)}\right)
$$
$$
 = \left({W(2^{j_1})-\E_{\bth_0} d_{\textnormal{tfBm}}^2(2^{j_1},0)},W(2^{j_2})-\E_{\bth_0} d_{\textnormal{tfBm}}^2(2^{j_2},0)\right)
$$
$$
\mathbf{v}_{j_3,j_4} = \left({W(a(n)2^{j_3})-\sum_{\ell\in\bbZ}\frac{\sigma^2 \Gamma^2(H+\frac{1}{2})}{|\lambda^2+(2\pi \ell)|^2}},\ldots,{W(a(n)2^{j_4})-\sum_{\ell\in\bbZ}\frac{\sigma^2 \Gamma^2(H+\frac{1}{2})}{|\lambda^2+(2\pi \ell)|^2}}\right).
$$
By relation \eqref{e:Cov(sqrt(n)W(2j),sqrt(n/a)W(a2j'))->0_tfBm}, the same limit \eqref{e:Cov(sqrt{n}v_{j_1,j_2},sqrt{n/a(n)}v_{j_3,j_4})} holds. {In regard to asymptotic normality,} similarly to part $(i)$, we adapt the argument in Lemmas \ref{l:ANwspec} and \ref{fBm_AN}. Let
$$
V_n=\Big(d(2^{j_1},1),\ldots,d(2^{j_1},n_{j_1}),d(2^{j_2},1),\ldots,d(2^{j_2},n_{j_2}),
$$
$$
d(a(n)2^{j_3},1),\ldots,d(a(n)2^{j_3},n_{a,j_3}),\ldots,d(a(n)2^{j_4},1),\ldots,d(a(n)2^{j_4},n_{a,j_4})\Big),
$$
define $D_n$ as in \eqref{e:new_Dn}, and let $U_n= \sum_{j=j_1,j_2} a_jW(2^j) +\sum_{j=j_3}^{j_4} \frac{a_j}{\sqrt{a(n)}}W(a(n)2^j)$. The analogous expression to \eqref{e:lambda_max_fbm_with_mixed_scales} is bounded due to \eqref{e:tfbm_decorr} and \eqref{e:EdtfBm(2^j,k)dtfBm(2^{j'},k')}. Therefore,
\begin{equation}\label{e:mixed_scales_AN_under_H1}
\bbR^{j_4-j_3+3} \ni \sqrt{n}
\begin{pmatrix}
\mathbf{v}_{j_1,j_2} \\
a^{-1/2}(n)\mathbf{v}_{j_3,j_4}
\end{pmatrix}
\stackrel{d}{\to}
\mathcal{N}\left(\mathbf0,
\begin{pmatrix}F^{j_1,j_2} & \mathbf 0\\
\mathbf{0} &\widetilde{F}^{j_3,j_4} \\
\end{pmatrix}
\right), \quad n \rightarrow \infty.
\end{equation}
In \eqref{e:mixed_scales_AN_under_H1}, $F^{j_1,j_2} = \{F_{j,j'}\}_{j,j'= j_1,j_2}$, $\widetilde{F}^{j_3,j_4} = \{\widetilde{F}_{j,j'}\}_{j,j'= j_3,\ldots,j_4}$, $F_{j,j'}$ is given by \eqref{e:Fjj'} and $\widetilde{F}_{j,j'}$ is given by \eqref{e:F-tilde_j,j'}, since by the Isserlis relation and Lemma \ref{Cov_scale_to_infty},
\begin{equation}\label{H1_cov_mat}
\frac{n}{a(n)}\Cov\left(W(2^ja(n)),W(2^{j'}a(n))\right) \to 2^{\min\{j,j'\}+1}\sum_{r\in\Z} \zeta_{\infty}^2(j,j',{r2^{\min\{j,j'\}}}) = \widetilde{F}_{j,j'}.
\end{equation}
As in the proof of part $(i)$, we now reexpress the vector of estimators $(\widehat{H}_n, \widehat{\sigma^2_n},\widetilde{H}_n)$ in terms of wavelet variances.  By the mean value theorem and \eqref{e:Ed2_fbm=Ed2_tfBm},
$$
\sqrt{n}((\widehat{H}_n,\widehat{\sigma^2_n}) -(H',{\sigma^2}'))
$$
\begin{equation}\label{e:tfbm_Mest_meanval}
= -\left[D^2f_n(H^*_n,{\sigma^2_n}^*)\right]^{-1}2\sum_{j= j_1,j_2}\sqrt{n}\left(B(j) +\frac{W(2^j)-\E_{H',{\sigma^2}'} d_{\textnormal{fBm}}^2(2^j,0)}{W^*(2^j)\log(2)}\right)w_jD\widetilde\eta_{j}(H',{\sigma^2}'),
\end{equation}
where $\E_{H',{\sigma^2}'} d_{\textnormal{fBm}}^2(2^j,0) = \E_{\bth_0} d_{\textnormal{tfBm}}^2(2^j,0)$, $(H^*_n,{\sigma^2_n}^*)$ is between $(\widehat{H}_n,\widehat{\sigma^2_n})$ and $(H',{\sigma^2}')$, and $W^*(2^j)$ is between $W(2^j)$ and $\E_{\bth_0} d_{\textnormal{tfBm}}^2(2^j,0)$.   By following an argument similar to that of Theorem \eqref{t:asympt}, $\sqrt{n}(\widehat{H}_n-H,\widehat{\sigma^2_n}-\sigma^2)$ is asymptotically normal, and in particular, $(\widehat{H}_n,\widehat{\sigma^2_n})\pto (H,\sigma^2)$. Thus, similarly as in part $(i)$, by continuity,
$$
\left[D^2f_n(H^*_n,{\sigma^2}^*_n)\right]^{-1}\pto \widetilde{Q}(\bth')^{-1},
$$
where $\widetilde{Q}(\bth)$ is given by \eqref{e:def_widetilde_Q}. Likewise, by Theorem \ref{t:asympt},
$$
W(2^j)\pto \E_{\bth_0}d_{\textnormal{tfBm}}^2(2^j,0), \quad W^*(2^j)\pto \E_{\bth_0} d_{\textnormal{tfBm}}^2(2^j,0), \quad n \rightarrow \infty.
$$
By relation \eqref{e:sum-varpi=0,sum-j*varpij=1} and the mean value theorem,
$$
\widetilde{H}_n - (-1/2)= \frac{1}{2}\sum^{j_4}_{j=j_3} \varpi_{j} \log_2 W(a(n)2^j)
$$
$$
= \frac{1}{2}\sum^{j_4}_{j=j_3} \varpi_{j}\left(\log_2 W(a(n)2^j) - \log_2\sum_{\ell\in\bbZ}\frac{\sigma^2 \Gamma^2(H+\frac{1}{2})}{|\lambda^2+(2\pi \ell)^2|^{H+\frac{1}{2}}}\right)
$$
\begin{equation}\label{e:tfbm_wavereg_meanval}
= \frac{1}{2}\sum^{j_4}_{j=j_3} \varpi_{j}\Big(\frac{W(a(n)2^j)- \sum_{\ell\in\bbZ} \sigma^2 \Gamma^2(H+\frac{1}{2})/|\lambda^2+(2\pi \ell)^2|^{H+\frac{1}{2}}}{W^*(a(n)2^j)\log(2)} \Big),
\end{equation}
where $W^*(a(n)2^j)$ is between  $W(a(n)2^j)$ and $\sum_{\ell\in\bbZ}\frac{\sigma^2 \Gamma^2(H+\frac{1}{2})}{|\lambda^2+(2\pi \ell)^2|^{H+\frac{1}{2}}}$.  From  the limits \eqref{zeta_to_zeta_inf} and \eqref{e:mixed_scales_AN_under_H1}, $W(a(n)2^j)\pto \zeta_\infty(j,j,0)=\sum_{\ell\in\bbZ}\frac{\sigma^2 \Gamma^2(H+\frac{1}{2})}{|\lambda^2+(2\pi \ell)^2|^{H+\frac{1}{2}}}=\zeta_\infty(0,0,0)$, i.e., $W^*(a(n)2^j)\pto \zeta_\infty(0,0,0)$. Now, by considering \eqref{e:tfbm_Mest_meanval} and \eqref{e:tfbm_wavereg_meanval}, we may write out a random vector analogous to \eqref{e:fbm_estim_matrix}, where
the limits analogous to \eqref{e:u1,u2,u3} and \eqref{e:sqrt(n)u_B->0} hold.

In view of the convergence of $\sqrt{n}\mathbf{v}_{j_1,j_2}$ in \eqref{e:mixed_scales_AN_under_H1}, by following an argument analogous to that of Theorem \ref{t:asympt}, writing $\bth'=(H',{\sigma^2}')$, we obtain
\begin{equation}\label{e:Hhat_underH1_AN}
n\Var(\widehat{H}_n,\widehat{\sigma^2_n})\to K(\bth').
\end{equation}
In view of the convergence of $\sqrt{n}\mathbf{v}_{j_3,j_4}$ in \eqref{e:mixed_scales_AN_under_H1}, and in view of \eqref{e:tfbm_wavereg_meanval},
as $n\to\infty$,
\begin{equation}\label{e:eq_Var_widetilde_H_tfBm}
\frac{n}{a(n)}\Var(\widetilde{H}_n) \to \frac{1}{\zeta_\infty^2(0,0,0)4\log^2 2 }\sum_{j=j_3}^{j_4}\sum_{j'=j_3}^{j_4}\varpi_j\varpi_{j'}\widetilde{F}_{j,j'} =\gamma_1^2.
\end{equation}}
Thus, starting from the random vector \eqref{e:fbm_estim_matrix} and relations \eqref{e:u1,u2,u3} and \eqref{e:sqrt(n)u_B->0} redefined for tfBm, in view of \eqref{e:mixed_scales_AN_under_H1}, \eqref{e:Hhat_underH1_AN} and \eqref{e:eq_Var_widetilde_H_tfBm}, Slutsky's theorem implies that the random vector $\sqrt{n}((\widehat{H}_n,\widehat{\sigma^2_n},a^{-1/2}(n)\widetilde{H}_n)-(H,\sigma^2,H))$ is asymptotically normal with covariance matrix
$$
L = \begin{pmatrix}
K & \mathbf{0}\\
\mathbf{0} & \gamma_1^2
\end{pmatrix}. \quad \Box\\
$$
\end{proof}

\begin{remark}\label{r:sqrt(n/a(n))(Wa(2j)-C2j(2H+1))_asympt_norm}
Under condition \eqref{e:seq_decay_assumption}, expression \eqref{e:wavelet_var_fBm_weak_limit} can be established by making use of Lemma A.4 and adapting Lemma A.1, or alternatively by a simple adaptation of the argument for showing Proposition D.3 in Abry et al.\ \cite{abry:didier:li:2018}, which includes the simplification that, for a univariate process, no demixing matrix needs to be considered -- see the reference for details.
\end{remark}

\noindent {\sc Proof of Theorem \ref{t:test}}:
For (\emph{i}), reexpress the matrix \eqref{e:matrix_M} as $M = (M_{i\ell})_{i,\ell=1,2,3}$ and write $\tau^2_0(H):=M_{11}+\gamma_0^2$. Lemma \ref{Hest_AN} implies that
$$
\sqrt{n}T_n \stackrel{d}\to \mathcal{N}(0,\tau^2_0(H)), \quad n \rightarrow \infty.
$$
For $(ii)$, reexpress the matrix \eqref{e:matrix_K} as $K = (K_{i\ell})_{i,\ell=1,2}$.   Also write
$$
\tau^2_1(H,\lambda):=  K_{11} + \gamma_1^2.
$$
By Lemma \ref{Hest_AN}, $(ii)$, $\widehat{H}_n \pto H'$ for some $H'\in(0,H)$ depending on both $H$ and $\lambda$, and $\widetilde{H}_n\pto -\frac{1}{2}$, i.e. $T_n\pto H'+\frac{1}{2}$.  So let $\vartheta = H'+\frac{1}{2}$.  Again by Lemma \ref{Hest_AN},
$$
\sqrt{n}(T_n-\vartheta) \stackrel{d}\to \mathcal{N}(0,\tau^2_1(H,\lambda)), \quad n \rightarrow \infty. \quad \Box\\
$$

\bibliography{tempered_fBm}

\small

\bigskip

\noindent \begin{tabular}{lr}
B. Cooper\ Boniece and Gustavo Didier & \hspace{6cm} Farzad Sabzikar \\
Mathematics Department & Department of Statistics \\
Tulane University  & Iowa State University \\
6823 St.\ Charles Avenue & 2438 Osborn Drive  \\
New Orleans, LA 70118, USA & Ames, IA 50011-1090, USA \\
{\it bboniece@tulane.edu} & {\it sabzikar@iastate.edu}  \\
{\it gdidier@tulane.edu}       &   \\
\end{tabular}\\

\smallskip

\end{document}